\definecolor{red}{rgb}{0.8,0,0}
\definecolor{darkorange}{rgb}{1,0.4,0}
\definecolor{lightorange}{rgb}{1,0.6, 0}
\definecolor{yellow}{rgb}{1,0.8, 0}
\newtheorem{theorem}{Theorem}
\newtheorem{remark}{Remark}
\newtheorem{definition}{Definition}
\newtheorem{lemma}{Lemma}
\newtheorem{assumption}{Assumption}
\newtheorem{example}{Example}
\newcommand{\charset}{X}
\newcommand{\charvec}{\bm\chi}
\newcommand\tr{\operatorname{tr}}
\newcommand\inc{\operatorname{inc}}
\newcommand\skw{\operatorname{skw}}
\newcommand\sskw{\operatorname{sskw}}
\newcommand\vskw{\operatorname{vskw}}
\newcommand\mskw{\operatorname{mskw}}
\newcommand\sym{\operatorname{sym}}
\newcommand\grad{\operatorname{grad}}
\newcommand\deff{\operatorname{def}}
\renewcommand\div{\operatorname{div}}
\renewcommand\ker{\mathcal{N}}
\newcommand\curl{\operatorname{curl}}
\newcommand\rot{\operatorname{rot}}
\newcommand\dev{\mathrm{dev}}
\newcommand \alt{\mathrm{Alt}}
\newcommand\hess{\operatorname{hess}}
\newcommand\spn{\operatorname{span}}
\newcommand\ran{\mathcal{R}}
\newcommand\K{\mathbb{K}}
\newcommand\M{\mathbb{M}}
\newcommand{\n}{\boldmath n}
\newcommand\dd{\,\mathrm{d}}
\newcommand\dx{\,\mathrm{d}x}
\renewcommand\S{{\mathbb S}}
\newcommand\R{\mathbb{R}}
\newcommand\V{{\mathbb{V}}}
\newcommand{\nodal}{\mathcal N}
\newcommand{\interval}{\mathcal I}
\newcommand{\interp}{\Pi}
\newcommand{\winterp}{\pi}
\newcommand{\wnodal}{\overline{\nodal}}
\newcommand{\opS}{\mathcal T}
\newcommand{\bs}{{\scriptscriptstyle \bullet}}
\newcommand{\br}{\mathbf r}
\newcommand{\vbr}{\vec{\mathbf r}}
\newcommand{\vp}{\vec{p}}
\subjclass[2020]{%
65N99; 
41A15, 
41A63, 
65N30
}
\keywords{Bernstein-Gelfand-Gelfand sequence, tensor product, splines, finite elements}
\author[F. Bonizzoni]{Francesca Bonizzoni}
\address{MOX - Department of Mathematics, Politecnico di Milano. Via Bonardi 9, 20133 Milano.}
\email{francesca.bonizzoni@polimi.it}
\thanks{FB acknowledges support from the HGS MathComp through the Distinguished Romberg Guest Professorship Program. Moreover, FB is member of the INdAM Research group GNCS and her work was part of a project that has received funding from the European Research Council ERC under the European Union's Horizon 2020 research and innovation program (Grant agreement No.~865751).}
\author[K. Hu]{Kaibo Hu}
\address{School of Mathematics, the University of Edinburgh, James Clerk Maxwell Building,
Peter Guthrie Tait Rd, Edinburgh EH9 3FD, UK.}
\email{kaibo.hu@ed.ac.uk}
\thanks{
KH was supported by a Hooke Research Fellowship and a Royal Society University Research Fellowship (URF$\backslash${\rm R}1$\backslash$221398)
}
\author[G. Kanschat]{Guido Kanschat}
\address{Interdisciplinary Center for Scientific Computing (IWR), Heidelberg University, Im Neuenheimer Feld 205, 69120 Heidelberg, Germany}
\email{kanschat@uni-heidelberg.de}
\thanks{GK was supported by the Deutsche Forschungsgemeinschaft (DFG, German Research Foundation) under Germany's Excellence Strategy EXC 2181/1 - 390900948 (the Heidelberg STRUCTURES Excellence Cluster)}
\author[D. Sap]{Duygu Sap}
\address{Mathematical Institute, University of Oxford, Andrew Wiles Building, Radcliffe Observatory Quarter, Oxford, OX2 6GG, UK.}
\email{duygu.sap@maths.ox.ac.uk}
\thanks{DS was supported by the Engineering and Physical Sciences Research Council (EPSRC) grants EP/S005072/1 and EP/R029423/1.}
\thanks{The authors would like to thank Espen Sande for helpful discussions.}
\begin{document}

\title[Tensor product BGG sequences: splines and FEM]{Discrete tensor product BGG sequences: splines and finite elements}

\maketitle

\begin{abstract}
In this paper, we provide a systematic discretization of the Bern\-stein-Gelfand-Gelfand (BGG) diagrams and complexes over cubical meshes in arbitrary dimension via the use of tensor-product structures of one-dimensional piecewise-polynomial spaces, such as spline and finite element spaces. We demonstrate the construction of the Hessian, the elasticity, and $\div\div$ complexes as examples for our construction. 
\end{abstract}

Differential complexes encode key algebraic structures in analysis and computation for partial differential equations. In addition to the de Rham complex with applications in electromagnetism, other complexes, such as the elasticity (Kr\"oner, Calabi, Riemannian deformation) complex, are drawing attention for problems in areas such as geometry, general relativity and continuum mechanics \cite{angoshtari2015differential,arnold2018finite,Arnold2006a,arnold2021complexes,pauly2016closed,pauly2018divdiv}. 
These complexes are special cases of the so-called Bernstein-Gelfand-Gelfand (BGG) sequences \cite{Arnold2006a,arnold2021complexes,vcap2022bgg,cap2009parabolic,vcap2001bernstein}.  Recently, a systematic study of the construction of BGG complexes and their properties was given in \cite{arnold2021complexes} with generalizations in \cite{vcap2022bgg}.

In the framework of the finite element exterior calculus (FEEC) \cite{arnold2018finite,Arnold.D;Falk.R;Winther.R.2006a,Arnold.D;Falk.R;Winther.R.2010a}, there has been a systematic discretization of the de Rham complex in any dimension for any form and polynomial degree. 
See also the finite element periodic table \cite{arnold2014periodic}. For example, conforming finite elements over cubical meshes were developed for the Stokes problem using de Rham complexes in $\mathbb{R}^n$ \cite{neilan2016stokes}.  In the context of isogeometric analysis (IGA) \cite{cottrell2009isogeometric}, spline de Rham complexes in 3D can be found in, e.g., \cite{buffa2011isogeometric}.

Since the seminal work on conforming finite elements on triangular meshes by Arnold and Winther \cite{arnold2002mixed}, there has been a lot of progress on discretizing the Hellinger-Reissner formulation of linear elasticity which involves the last two spaces of the elasticity complex \cite{arnold2018finite}. 
This was the first motivation and application of BGG complexes in numerical analysis \cite{Arnold2006a}, see also \cite{arnold2007mixed,eastwood2000complex}. Recently, there has been a surge of interests on discretizing elasticity and other special cases of BGG complexes \cite{arf2021structure,arnold2005rectangular, chen2020finite,chen2021finite,chen2021finite2,chen2020discrete,hu2021conforming,hu2021conforming2,hu2022new,sander2021conforming,christiansen2020discrete,christiansen2019finite}. See also \cite{hu2022oberwolfach} for a review.  
Moreover, we mention \cite{chen2022complexes,chen2023finite}, where a discretization for the entire BGG complex in 2D and 3D is proposed. There are also recent results on conforming simplicial finite elements for $(n-1)$-forms in $\mathbb{R}^n$ \cite{chen2021geometric}. Nevertheless, a systematic discretization of either the BGG complexes or the entire BGG machinery behind them in arbitrary dimension is still missing.
The underlying machinery can have important consequences beyond the final complexes (see, e.g., \cite{vcap2022bgg} for connections between twisted de Rham complexes and the Cosserat continua and \cite{christiansen2019poincare} for deriving Poincar\'e operators using the BGG machinery). 
This gap is a major issue to address in order to use the BGG construction for numerical computation. 

Restricted to the discrete level, the BGG machinery also provides a constructive tool for deriving discrete spaces by a diagram chasing. This idea has been developed at several places: a re-interpretation of the Arnold-Winther and Hu-Zhang elasticity elements \cite{Arnold2006a,christiansen2018nodal},   conforming finite elements for linearized curvature on 2D triangular meshes \cite{christiansen2019finite}, and a finite element elasticity complex on tetrahedral grids \cite{christiansen2020discrete}. These results were derived in a case-by-case manner. Special cases of BGG complexes  can be found in \cite{arf2021structure}.


In this paper, we provide a systematic discretization of the BGG diagrams and complexes based on tensor product construction in arbitrary dimension. In particular, the resulting spaces are tensor products of spaces in one space dimension (1D), and they are naturally defined over the unit cube $[0,1]^n$. However, their extension to rectangular domains and to finite elements with rectangular mesh cells is straightforward.
The tensor product construction is built upon two general assumptions on sequences in 1D. As examples of input spaces for 1D complexes, we consider both splines and finite elements. We verify that the assumptions of the construction in \cite{arnold2021complexes} on the continuous level hold for these tensor product spaces.
The resulting discrete BGG diagram is symmetric in the sense that the space of $(i, j)$-forms is isomorphic to the space of $(j, i)$-forms as they have the same coefficients.
The tensor product construction leads to bounded commuting (quasi-)interpolations for splines and finite elements, which are important for analyzing numerical schemes for PDEs.
Thus, it provides us with a powerful tool to obtain stable 
discretizations on cubical meshes. It also avoids the issue of intrinsic super-smoothness (c.f. \cite{floater2020characterization}) known from simplicial schemes.

The outline of the paper is as follows: In Section \ref{sec:review-bgg}, we review the BGG construction at the continuous level, and in Section \ref{sec:construction}, we demonstrate how we construct tensor-product BGG complexes for arbitrary form degree and space dimension.  In Sections 3 and 4, we derive spline and tensor product finite element BGG complexes and provide examples that include the elasticity complex, the $\div\div$ complex and the Hessian complex.  In Section \ref{sec:conclusion}, we provide a brief review of the main aspects of our construction and highlight our results. We also show why extending our construction to more delicate BGG complexes (e.g., the conformal complexes which require input of three de Rham complexes \cite{vcap2022bgg}) is nontrivial.

\section{BGG complexes of differential forms}
\label{sec:review-bgg}

\subsection{Review of the BGG construction}

We briefly review the main idea of the BGG construction. The BGG construction is a tool to derive more complexes from the de Rham complexes. In the setup in \cite{arnold2021complexes}, we consider the following {\it BGG diagram}:
\begin{equation}\label{Z-complex}
\begin{tikzcd}
0 \arrow{r} &Z^{0} \arrow{r}{D^{0}} &Z^{1} \arrow{r}{D^{1}} &\cdots \arrow{r}{D^{n-1}} & Z^{n}  \arrow{r}{} & 0\\
0 \arrow{r} &\tilde{Z}^{0}\arrow{r}{\tilde{D}^{0}} \arrow[ur, "S^{0}"]&\tilde{Z}^{1} \arrow{r}{\tilde{D}^{1}} \arrow[ur, "S^{1}"]&\cdots \arrow{r}{\tilde{D}^{n-1}}\arrow[ur, "S^{n-1}"] & \tilde{Z}^{n} \arrow{r}{} & 0.
 \end{tikzcd}
\end{equation}
Typically, each row of \eqref{Z-complex} is a scalar- or vector-valued de Rham complex, and the two rows are connected by some algebraic operators $S^{\bs}$ (in vector proxies, this can be, e.g., taking the skew-symmetric or the trace part of a matrix). We require that \eqref{Z-complex} is a commuting diagram, meaning that $D^{j+1}S^j=-S^{j+1}\tilde D^j$ for all $j=0,\ldots,n-1$ (the sign is not important, but just for technical reasons).
There is one index $J$ such that $S^{J}$ is bijective, and $S^{j}$ are injective for $j\leq J$ and surjective for $j\geq J$. From a BGG diagram, we can read out a {\it BGG sequence}. The recipe is that we start from the first row, restricted to $\ran(S)^{\perp}$ at each index (for example, if $S$ is taking the skew-symmetric part of a matrix, then $\ran(S)^{\perp}$ consists of symmetric matrices). Then at index $J$, we connect the two rows by constructing a new operator $\tilde{D}^J(S^{J})^{-1}D^J$. 

This is possible since $S^{J}$ is bijective and corresponds to a zig-zag on the diagram. Then we come to the second row. For the rest of the indices $j>J$, we restrict the domains to $\ker(S)$ in the second row. 
The final BGG complex is summarized as the following:
\begin{equation}\label{reduced-complex-1}
\begin{tikzcd}[row sep=small]
0 \arrow{r}&\Upsilon^{0}\arrow{r}{\mathscr{D}^{0}} &\Upsilon^{1} \arrow{r}{\mathscr{D}^{1}} &\cdots \arrow{r}{\mathscr{D}^{J-1}}&\Upsilon^{J}\\ \quad &\arrow{r}{\mathscr{D}^{J}} & \Upsilon^{J+1} \arrow{r}{\mathscr{D}^{j+1}} &\cdots\arrow{r}{\mathscr{D}^{n-1}} &\Upsilon^{n}\arrow{r}& 0,
 \end{tikzcd}
\end{equation}
with spaces
\begin{equation}\label{defUps}
\Upsilon^{j}:=
\begin{cases}
Z^0, & j=0,\\
 \ran ({S}^{j-1} )^{\perp}, & 1\leq j\leq J,\\
 \ker ({S}^{j} ), & J<j\leq n,
\end{cases}
\end{equation}
and operators
\begin{equation}\label{Dder}
\mathscr{D}^{j}=
\begin{cases}
  P_{\ran(S^j)^{\perp}} D^{j}, & 0\le j<J,\\
 \tilde{D}^{j}(S^{j})^{-1}D^{j},& j=J,\\
\tilde{D}^{j}, & J < j \le n,
\end{cases}
\end{equation}
where $P_{\ran(S^j)^{\perp}}$ denotes the orthogonal projection onto $\ran(S^j)^{\perp}$.

The main conclusion of this construction is that, under some conditions, the cohomology of the BGG complex \eqref{reduced-complex-1} is isomorphic to the cohomology of the input complexes in \eqref{Z-complex}.  For de Rham complexes, the cohomology is known with a broad class of function spaces \cite{costabel2010bogovskiui}. Various analytic properties follow from general arguments based on the fact that the cohomology being finite dimensional \cite{arnold2021complexes}. 

\begin{example}[BGG complex in 1D]
The simplest example is in one space dimension. We consider the following diagram with any real number $q$
 \begin{equation}\label{diagram-1D}
\begin{tikzcd}
0 \arrow{r} &H^{q}  \arrow{r}{\partial_{x}} &H^{q-1} \arrow{r}{} & 0\\
0 \arrow{r} &H^{q-1}\arrow{r}{\partial_{x}} \arrow[ur, "I"]&H^{q-2}  \arrow{r}{} & 0.
 \end{tikzcd}
\end{equation}
Here we only have one connecting operator, specifically, we have $S^0= I$ identity operator, which is obviously bijective (one may also extend the diagram by adding zero maps). All the conditions for the BGG diagrams hold trivially. From \eqref{diagram-1D} we derive the BGG complex 
\begin{equation}\label{BGG-complex1D}
\begin{tikzcd}
0 \arrow{r} &H^{q}  \arrow{r}{\partial_{x}^{2}} &H^{q-2} \arrow{r}{} & 0.
 \end{tikzcd}
\end{equation}
In this case, the BGG construction is just to connect the two first-order derivatives to get a second-order derivative. In this simple example it is also easy to verify our claim on the cohomology: the kernel of $\partial_{x}$ (cohomology at index 0) in each row is $\mathbb{R}$. In the BGG complex \eqref{BGG-complex1D}, the kernel of $\partial_{x}^{2}$ (cohomology at index 0) is isomorphic to $\mathbb{R}\oplus \mathbb{R}$.
\end{example}

\subsection{Application to alternating form-valued differential forms}

We follow the construction of alternating form-valued differential forms in~\cite{arnold2021complexes}.
For $i\ge 0$, let $\alt^i \R^n$ be the space of algebraic $i$-forms, that is, of alternating $i$-linear maps on $\mathbb{R}^{n}$.  We also set $\alt^{i, j}\R^n=\alt^{i}\R^n\otimes \alt^{j}\R^n$, the space of $\alt^{j}\R^n$-valued $i$-forms or, equivalently, the space of $(i+j)$-linear maps on $\mathbb{R}^{n}$ which are alternating in the first $i$ variables and also in the last $j$ variables. For the linking maps, we define the algebraic operators $s^{i, j}: \alt^{i, j} \R^n\to \alt^{i+1, j-1}\R^n$ 
\begin{multline}
\label{def:s}
s^{i, j}\mu (v_{0},\cdots,  v_{i})(w_{1}, \cdots, w_{j-1})
\\:=\sum_{l=0}^{i}(-1)^{l} \mu (v_{0},\cdots, \widehat{v_{l}}, \cdots,  v_{i})(v_{l}, w_{1}, \cdots, w_{j-1}), \\ \forall v_{0}, \cdots, v_{i}, w_{1}, \cdots, w_{j-1} \in \mathbb{R}^{n},
\end{multline}
where in each term of the alternating sum the \textit{hat} denotes the suppressed vector, that is, the vector we move from the first parenthesis to the second (see the appendix in~\cite{arnold2021complexes}).
Alternatively, we have the following expression of $s^{i,j}$ on a basis of alternating forms:
let $\sigma\in\Sigma(k,n)$ and $\tau\in\Sigma(m,n)$ be combinations of $k$ and $m$ elements of $\{1,\dots,n\}$, respectively. Then,
\begin{multline}
    s^{k, m}(dx^{\sigma_1}\wedge\cdots \wedge dx^{\sigma_k}\otimes dx^{\tau_{1}}\wedge \cdots \wedge dx^{\tau_{m}})
    \label{eq:s_iJ}
    \\=\sum_{l=1}^{m}(-1)^{l-1}dx^{\tau_l}\wedge dx^{\sigma_1}\wedge\cdots \wedge dx^{\sigma_k}\otimes dx^{\tau_1}\wedge \cdots \wedge \widehat{dx^{\tau_l}}\wedge\cdots\wedge dx^{\tau_m},
\end{multline}
where we move one factor from the second term in the tensor product to the first. For completeness, we include a detailed proof of equation~\eqref{eq:s_iJ} in Appendix \ref{sec:appendix}. From now on, we will omit $\mathbb{R}^n$ in the notation when confusion is unlikely.
We also write $S^{i,j}=I\otimes s^{i,j}:H^q\otimes\alt^{i, j}\to H^q\otimes\alt^{i+1, j-1}$ for any Sobolev order $q$. 
We use $H^{q}\Lambda^{i}$ as another notation for $H^{q}\otimes \alt^{i}$.
We have the exterior derivative, $d^{i}: H^{q}\otimes \alt^{i}\to H^{q-1}\otimes \alt^{i+1}$. Tensorizing with $\alt^{j}$ then gives $d^{i}: H^{q}\otimes \alt^{i,j}\to H^{q-1}\otimes \alt^{i+1,j}$. With these definitions, we may write down the diagram generalizing \eqref{diagram-1D} 
to $n$ dimensions: 
\begin{equation}\label{diagram-nD}
  \adjustbox{scale=0.9,center}{%
\begin{tikzcd}[column sep=tiny]
0 \arrow{r} & H^{q}\otimes\alt^{0,0}  \arrow{r}{d} &H^{q-1}\otimes\alt^{1,0}  \arrow{r}{d} & \cdots \arrow{r}{d} & H^{q-n}\otimes \alt^{n,0} \arrow{r}{} & 0\\
0 \arrow{r} & H^{q-1}\otimes\alt^{0,1}  \arrow{r}{d} \arrow[ur, "S^{0,1}"] &H^{q-2}\otimes\alt^{1,1}  \arrow{r}{d}  \arrow[ur, "S^{1,1}"] & \cdots \arrow{r}{d}  \arrow[ur, "S^{n-1,1}"] & H^{q-n-1}\otimes \alt^{n,1} \arrow{r}{} & 0\\[-15pt]
 & \vdots & \vdots & {} & \vdots & {} \\[-15pt]
0 \arrow{r} & H^{q-n+1}\otimes\alt^{0,n-1}  \arrow{r}{d} &H^{q-n}\otimes\alt^{1,n-1}  \arrow{r}{d} & \cdots \arrow{r}{d} & H^{q-2n+1}\otimes \alt^{n,n-1} \arrow{r}{} & 0\\
0 \arrow{r} & H^{q-n}\otimes\alt^{0,n}  \arrow{r}{d} \arrow[ur, "S^{0,n}"] &H^{q-n-1}\otimes\alt^{1,n}  \arrow{r}{d}  \arrow[ur, "S^{1,n}"] & \cdots \arrow{r}{d}  \arrow[ur, "S^{n-1,n}"] & H^{q-2n}\otimes \alt^{n,n} \arrow{r}{} & 0.
\end{tikzcd}}
\end{equation}

\begin{example}[BGG complexes in 3D]
The BGG diagram~\eqref{diagram-nD} has the following vector/matrix proxies for $n=3$, where the operators between proxies are defined in Appendix~\ref{sec:appendix-proxies}:
 \begin{equation}\label{diagram-4rows}
   \adjustbox{scale=0.9,center}{%
\begin{tikzcd}
0 \arrow{r} &H^{q}\otimes \mathbb{R}  \arrow{r}{\grad} &H^{q-1}\otimes \mathbb{V} \arrow{r}{\curl} &H^{q-2}\otimes \mathbb{V} \arrow{r}{\div} & H^{q-3}\otimes \mathbb{R} \arrow{r}{} & 0\\
0 \arrow{r}&H^{q-1}\otimes \mathbb{V}\arrow{r}{\grad} \arrow[ur, "I"]&H^{q-2}\otimes \mathbb{M}  \arrow{r}{\curl} \arrow[ur, "2\vskw"]&H^{q-3}\otimes \mathbb{M} \arrow{r}{\div}\arrow[ur, "\tr"] & H^{q-4}\otimes \mathbb{V} \arrow{r}{} & 0\\
0 \arrow{r} &H^{q-2}\otimes \mathbb{V}\arrow{r}{\grad} \arrow[ur, "-\mskw"]&H^{q-3}\otimes \mathbb{M}  \arrow{r}{\curl} \arrow[ur, "\opS"]&H^{q-4}\otimes \mathbb{M} \arrow{r}{\div}\arrow[ur, "2\vskw"] & H^{q-5}\otimes \mathbb{V} \arrow{r}{} & 0\\
0 \arrow{r} &H^{q-3}\otimes \mathbb{R}\arrow{r}{\grad} \arrow[ur, "\iota"]&H^{q-4}\otimes \mathbb{V}  \arrow{r}{\curl} \arrow[ur, "-\mskw"]&H^{q-5}\otimes \mathbb{V} \arrow{r}{\div}\arrow[ur, "I"] & H^{q-6}\otimes \mathbb{R} \arrow{r}{} & 0.
 \end{tikzcd}}
\end{equation}
Here $\mathbb V:=\mathbb R^n$ denotes vectors and $\mathbb M$ is the space of all $n\times n$-matrices. Let further $\mathbb S$, $\mathbb K$, and $\mathbb T$ be the subspaces of matrices that are symmetric, skew-symmetric and trace-free, respectively.
Following the BGG recipe, from the first two rows of \eqref{diagram-4rows} we obtain the {Hessian complex} 
\begin{equation}\label{grad-grad0}
\begin{tikzcd}[cramped]
0\arrow{r}  & H^{q}\otimes \mathbb{R} \arrow{r}{\hess} & H^{q-2}\otimes \mathbb{S} \arrow{r}{\curl} & H^{q-3}\otimes \mathbb{T} \arrow{r}{\div} & H^{q-4}\otimes \mathbb{V} \arrow{r} & 0,
\end{tikzcd}
\end{equation}
where $\hess:=\grad\grad$.
 From the second and third rows of \eqref{diagram-4rows} we obtain the {elasticity complex}
 \begin{equation}\label{sequence:hs}
   \adjustbox{scale=0.9,center}{%
\begin{tikzcd}
0\arrow{r} & H^{q-1}\otimes \mathbb{V} \arrow{r}{{\deff}} & H^{q-2}\otimes \mathbb{S} \arrow{r}{\inc} & H^{q-4}\otimes \mathbb{S} \arrow{r}{\div} & H^{q-5}\otimes \mathbb{V} \arrow{r} & 0.
\end{tikzcd}}
\end{equation}
Here $\deff:=\sym\grad$ is the linearized deformation (symmetric part of gradient) and $\inc = \curl \opS^{-1}\curl$ leads to the linearized Einstein tensor. 
Finally, the last two rows of \eqref{diagram-4rows} yield the divdiv complex
 \begin{equation}\label{div-div0}
   \adjustbox{scale=0.9,center}{%
\begin{tikzcd}[row sep=large, column sep = large]
0 \to H^{q-2}\otimes \mathbb{V}  \arrow{r}{\dev\grad} & H^{q-3}\otimes \mathbb{T}  \arrow{r}{\sym\curl} & H^{q-4}\otimes \mathbb{S}  \arrow{r}{\div\div} & H^{q-6}\otimes \mathbb{V} \to 0.
\end{tikzcd}}
\end{equation}
\end{example}

\begin{remark}
On the continuous level, the BGG construction \cite{arnold2021complexes} starts with \eqref{diagram-4rows} and derives BGG complexes~\eqref{grad-grad0}-\eqref{div-div0} consisting of Sobolev $H^q$ functions with certain $q$.
Since $q$ is decreasing with every derivative operator, this requires excessive regularity on the left. This can be avoided introducing the domain complex with spaces $H(D)=\{u\in L^2: Du \in L^2\}$, where $D$ is any of the derivative operators in~\eqref{grad-grad0}-\eqref{div-div0}.

On the discrete level, we will fit in finite element or spline spaces which are compatible with the algebraic structures in \eqref{diagram-4rows} (in the sense that the horizontal and diagonal operators map one space to another). However, we are not constrained by the Sobolev regularity in \eqref{diagram-4rows} as the resulting BGG complexes~\eqref{grad-grad0}-\eqref{div-div0} usually have higher regularity than one needs. In the end, we will arrive at conforming discretization for complexes with $H(D)$ spaces with slightly higher regularity. For example, in the case with lowest regularity, our elasticity complex starts with an $H^1(\curl)$-conforming space, rather than $H^1=H(\deff)$, see for instance $V^{0, 1}=V^{1, 0}$ in \eqref{diagram-V10}. Nevertheless, the $H^1(\curl)$ conformity is still weaker than $H^4$ as required by \eqref{sequence:hs} if all the spaces there are to be at least $L^2$. 
\end{remark}

\begin{remark}\label{isomorphic-cx}
In the construction of simplicial finite elements or splines, the isomorphism between the $\grad$-$\rot$ version of complexes and the $\curl$-$\div$ version is obtained by swapping the tangent and normal directions. In the tensor product construction, this is obtained via changing parametric directions accordingly. 
\end{remark}
 
\begin{example}[BGG Complex in 2D]
Let $\sskw=\mskw^{-1}\circ \skw: \mathbb{M}\to \mathbb{R}$ be the map taking the skew part of a matrix and identifying it with a scalar (see \cite{arnold2021complexes}).

A 2D version of the diagram \eqref{diagram-4rows} is
\begin{equation}\label{diagram-3rows2D}
\begin{tikzcd}
0 \arrow{r}  &H^{q}\otimes \mathbb{R} \arrow{r}{\grad} &H^{q-1}\otimes \mathbb{V} \arrow{r}{\rot} & H^{q-2}\otimes \mathbb{R} \arrow{r}{} & 0\\
0 \arrow{r} & H^{q-1}\otimes \mathbb{V} \arrow{r}{\grad}\arrow[ur, "I"] &H^{q-2}\otimes \mathbb{M} \arrow{r}{\rot}\arrow[ur, "-2\sskw"] & H^{q-3}\otimes \mathbb{V} \arrow{r}{} & 0\\
0 \arrow{r}  &H^{q-2}\otimes \mathbb{R} \arrow{r}{\grad}\arrow[ur, "\mskw"] &H^{q-3}\otimes \mathbb{V} \arrow{r}{\rot}\arrow[ur, "I"] & H^{q-4}\otimes \mathbb{R} \arrow{r}{} & 0.
 \end{tikzcd} 
\end{equation}
\end{example}

Although the injectivity/surjectivity conditions are not necessary for running the BGG machinery \cite{vcap2022bgg}, we will stick to these conditions in our construction on the discrete level, for simplicity. To summarize, on the discrete level, we seek discrete versions of the BGG diagrams such that the injectivity/surjectivity conditions are preserved. Then the commutativity $DS=-S\tilde{D}$ is trivially following the results on the continuous level. To this end, we need discrete de Rham complexes for the two rows in the construction, respectively. In fact, this is rather easy to see in 1D. To discretize \eqref{diagram-1D}, we want to construct discrete spaces $V_{h}^{i, j}$ that fit in the following diagram
 \begin{equation}\label{diagram-1D-h}
\begin{tikzcd}
0 \arrow{r} &V^{0, 0}_{h}  \arrow{r}{\partial_{x}} &V^{1, 0}_{h}\arrow{r}{} & 0\\
0 \arrow{r} &V^{0, 1}_{h}\arrow{r}{\partial_{x}} \arrow[ur, "I"]&V^{1, 1}_{h}  \arrow{r}{} & 0.
 \end{tikzcd}
\end{equation}
The connecting map (the identity map) makes sense if $V^{1, 0}_{h}\cong V^{0, 1}_{h}$. This means that the first row has higher regularity than the second. This is a general pattern in the BGG construction, but becomes more complicated in higher dimensions.

\section{Tensor product construction}
\label{sec:construction}

In this section, we present a general construction. The idea is that we start with a diagram in 1D, and extend it to $\mathbb R^n$ by tensor product. This process is mostly algebraic and does not depend on a particular construction in 1D. Thus, we start with a fairly abstract assumption.

\subsection{A discrete BGG complex in 1D}

The general construction is based on the following assumption.
\begin{assumption}
\label{assump:1Dcomplex}
Let $\mathcal{I}:=[0, 1]$, and $L^2\Lambda^i(\mathcal{I}):=L^2(\mathcal{I})\otimes \alt^i\mathbb{R}$.
For an abstract smoothness parameter $\br$ and integer $p\geq 1$,  let $\mathcal{S}_{\br}^{p}$ be a finite dimensional subspace of $L^2(\mathcal{I})$ and thus $\mathcal{S}_{\br}^{p}\Lambda^i(\mathcal{I}):=\mathcal{S}_{\br}^{p}\otimes\alt^i\mathbb{R}$ be a finite dimensional subspace of $L^2\Lambda^i(\mathcal{I})$. Furthermore, we assume that the sequence 
\begin{equation}
\label{sequence:assumption} 
\begin{tikzcd}
0 \arrow{r}&\mathcal{S}_{\br}^{p}\Lambda^0(\mathcal{I}) \arrow{r}{d^{0}}& \mathcal{S}_{\br-1}^{p-1}\Lambda^1 (\mathcal{I})\arrow{r}{}&0
\end{tikzcd}
\end{equation} 
is a complex and $d^0$ is onto.
\end{assumption}

As we shall see later in specific examples, $\mathcal{S}_{\br}^{p}(\mathcal{I})$ may be a spline space on $\mathcal{I}$ of degree $p$ with regularity vector $\br$ (see Section~\ref{sec:examples_splines}) or a finite element space of degree $p$ and interelement continuity $r$ (see Section~\ref{sec:examples_fe}). The decreasing indices in \eqref{sequence:assumption} reflect the fact that $d^0$ is a first-order differential operator. For splines, since $\br$ is a vector, $\br\ge 0$ means every component of $\br$ is non-negative. Similarly, in expressions such as $\br-a$ where $a\in \mathbb{R}$, $a$ is subtracted from each component, see for instance~\cite[p.821]{buffa2011isogeometric}. 

When there is no danger of confusion, we omit $\mathcal{I}$ in the notation. 
Now we tensor the spaces in \eqref{sequence:assumption} with alternating forms and obtain with the above notation
\begin{equation}\label{def:Sp}
  \mathscr{S}_{\br}^p\Lambda^{i, j}(\mathcal{I})
  :=\mathcal{S}_{\br-i-j}^{p-i-j}\Lambda^{i, j}(\mathcal{I})
  :=\mathcal{S}_{\br-i-j}^{p-i-j}\otimes \alt^{i}\mathbb{R}\otimes \alt^j\mathbb{R},
\end{equation}
as the space of 1D alternating $i, j$-forms with coefficients in $\mathcal{S}_{\br-i-j}^{p-i-j}$. 
 Following Assumption~\ref{assump:1Dcomplex}, the following sequences are complexes for $j=0,1$ and $d^0$ is onto:
\begin{equation}\label{1D-ijcomplex}
\begin{tikzcd}
0 \arrow{r}&\mathcal{S}_{\br-j}^{p-j}\Lambda^{0, j}(\mathcal{I}) \arrow{r}{d^{0}}& \mathcal{S}_{\br-1-j}^{p-1-j}\Lambda^{1, j}(\mathcal{I})\arrow{r}{}&0.
\end{tikzcd}
\end{equation} 
Hence, we obtain a 1D BGG diagram that satisfies the assumptions in Section \ref{sec:review-bgg}:
 \begin{equation}\label{1D:BGG-diagram}
\begin{tikzcd}
0 \arrow{r} &\mathcal{S}_{\br}^{p}\Lambda^{0, 0} (\mathcal{I}) \arrow{r}{d^{0}} &\mathcal{S}_{\br-1}^{p-1}\Lambda^{1,0} (\mathcal{I})\arrow{r}{} & 0\\
0 \arrow{r} &\mathcal{S}_{\br-1}^{p-1}\Lambda^{0, 1}(\mathcal{I})\arrow{r}{d^{0}} \arrow[ur, "S^{0, 1}"]&\mathcal{S}_{\br-2}^{p-2}\Lambda^{1, 1}(\mathcal{I}) \arrow{r}{} & 0.
 \end{tikzcd}
\end{equation}

\begin{remark}
\label{rem:s}
In a vector proxy with canonical bases, $S^{0, 1}$ boils down to the identity operator. In 1D, there is only one $s^{k,m}$ operator according to~\eqref{def:s}, namely $s^{0,1}\colon \alt^{0,1}\to \alt^{1,0}$. Nevertheless, it will be useful for the construction of tensor products, namely Lemma~\ref{lem:s-tensorpro}, to define $s^{k,m}\equiv 0$ for all other $k,m\in\{0,1\}$.
\end{remark}

\subsection{Tensor product spaces and the exterior derivative}\label{sec:tsp_extderivs}

Let $$\sigma = (\sigma_1,\dots,\sigma_k)\in\Sigma(k,n)$$ be a combination of $k$ numbers from 
$\{1,\cdots,n\}$ such that $1\le \sigma_1<\dots<\sigma_k\le n$. Define the set of characteristic vectors
\begin{gather}
\charset_{k}:=\left\{
    \bm{\chi}=(\chi_{1}, \cdots, \chi_{n})\in \{0, 1\}^{n} \middle| \sum_{j=1}^{n} \chi_{j}=k
    \right\}.
\end{gather}
Then, the characteristic vector of a combination $\sigma\in\Sigma(k,n)$ is the vector $\charvec(\sigma)\in\charset_{k}$, such that
\begin{gather*}
    \chi_i(\sigma)=
    \begin{cases}
    1, & i\in\sigma,\\
    0, & i \not\in\sigma.
    \end{cases}
\end{gather*}
Using combinations, we can define a basis for $\alt^k\R^n$ consisting of elements
\begin{gather}
\label{eq:form-basis}
\begin{aligned}
    dx^\sigma &= dx^{\sigma_1} \wedge \dots \wedge dx^{\sigma_k}
    &\sigma&\in\Sigma(k,n),\\
    &= (dx^1)^{s_1} \wedge\dots\wedge (dx^n)^{s_n}
    \qquad
    &\bm s&=\charvec(\sigma)\in\charset_{k}.
\end{aligned}   
\end{gather}
Here, we define $(dx^i)^1 = dx^i$ and $(dx^i)^0 = 1$. Note that these are two different notations for the same form and we will use both of them below for convenience. 
For $\charvec\in\charset_{n}$ we introduce the notation 
\begin{equation}
    \label{eq:Iell}
    |\charvec|_{m}:=\sum_{l=1}^{m} \chi_{l},
\end{equation} 
with the convention $|\charvec|_{0}=0$.

In $\mathbb R^{n}$, define the unit hypercube $\mathcal{I}^n: =[0, 1]^n$. We define the following Sobolev spaces of alternating forms:
\begin{alignat*}2
L^{2}\Lambda^{i, j}(\mathcal{I}^n)
&:=L^2\Lambda^i(\mathcal{I}^n)\otimes \alt^{j}\mathbb R^{n}
&&=L^{2}(\mathcal{I}^n)\otimes \alt^{i,j}\mathbb R^{n},
\\
H^{q}\Lambda^{i, j}(\mathcal{I}^n)
&:=H^q\Lambda^i(\mathcal{I}^n)\otimes \alt^{j}\mathbb R^{n}
&&=H^{q}(\mathcal{I}^n)\otimes \alt^{i,j}\mathbb R^{n}. 
\end{alignat*}

Let $\sigma\in\Sigma(i,n)$ and $\tau\in\Sigma(j,n)$ be combinations with characteristic vectors 
$\bm s=\charvec(\sigma)$ and $\bm t=\charvec(\tau)$,
 respectively. Let
$\omega_k = \alpha_k (dx)^{s_k}\otimes (dx)^{t_k} \in L^2\Lambda^{s_k,t_k}$ for $k=1,\dots,n$ be one-dimensional differential forms with an 
$L^2$ coefficient $\alpha_k$.
Then, the $n$-dimensional tensor product is defined as
\begin{align*}
   & \omega_1\otimes\dots\otimes\omega_n\\
    &\quad = (\alpha_1\otimes \dots \otimes \alpha_n)
    (dx^1)^{s_1}\wedge\dots\wedge (dx^n)^{s_n}
    \otimes (dx^1)^{t_1}\wedge\dots\wedge (dx^n)^{t_n}\\
    &\quad = (\alpha_1\otimes \dots \otimes \alpha_n) dx^\sigma\otimes dx^\tau\\
    &\quad = (\alpha_1\otimes \dots \otimes \alpha_n)
    [(dx^1)^{s_1} \otimes (dx^1)^{t_1}] \wedge \dots \wedge
    [(dx^n)^{s_n} \otimes (dx^n)^{t_n}].
\end{align*}
Here we used different combinations of wedge and tensor products to express the same object. The different notations are used in various contexts below and should be clear from these identities.
We define the tensor product spaces:
\begin{align}
\nonumber
L^{2}\Lambda^{k, l}_{\otimes n}(\mathcal I^{n})
&\coloneqq\hspace{-0.5cm}
\bigoplus_{\substack{(s_{i}, \cdots, s_{n})\in \charset_{k}\\ (t_{1}, \cdots, t_{n})\in \charset_{l}}} L^{2}\Lambda^{s_{1}, t_{1}}(\mathcal{I})\otimes L^{2}\Lambda^{s_{2}, t_{2}}(\mathcal{I})\otimes \cdots\otimes L^{2}\Lambda^{s_{n}, t_{n}}(\mathcal{I})\\
\label{def:L2}
&=\hspace{-0.5cm}
\bigoplus_{\substack{(s_{i}, \cdots, s_{n})\in \charset_{k}\\ (t_{1}, \cdots, t_{n})\in \charset_{l}}} L^{2}(\mathcal{I}^n)\otimes \alt^{s_{1}, t_{1}}\otimes \cdots \otimes \alt^{s_{n}, t_{n}},
\end{align}
\begin{align}
\nonumber
\mathscr{S}_{\vbr}^{\vp}\Lambda^{k, l}
&\coloneqq\hspace{-0.5cm}
\bigoplus_{\substack{(s_{i}, \cdots, s_{n})\in \charset_{k}\\ (t_{1}, \cdots, t_{n})\in \charset_{l}}}\mathscr{S}^{p_{1}}_{\br_{1}}\Lambda^{s_{1}, t_{1}}(\mathcal I)\otimes \cdots \otimes \mathscr{S}^{p_{n}}_{\br_{n}}\Lambda^{s_{n}, t_{n}}(\mathcal I)
\\
\nonumber
&=\hspace{-0.5cm}
\bigoplus_{\substack{(s_{i}, \cdots, s_{n})\in \charset_{k}\\ (t_{1}, \cdots, t_{n})\in \charset_{l}}}\mathcal{S}^{p_{1}-s_1-t_1}_{\br_{1}-s_1-t_1}\Lambda^{s_{1}, t_{1}}(\mathcal I)\otimes \cdots \otimes \mathcal{S}^{p_{n}-s_n-t_n}_{\br_{n}-s_n-t_n}\Lambda^{s_{n}, t_{n}}(\mathcal I)
\\\label{def:Sp2}
&=\hspace{-0.5cm}
\bigoplus_{\substack{(s_{i}, \cdots, s_{n})\in \charset_{k}\\ (t_{1}, \cdots, t_{n})\in \charset_{l}}}
\left(\mathcal{S}^{p_{1}-s_1-t_1}_{\br_{1}-s_1-t_1}\otimes \cdots \otimes \mathcal{S}^{p_{n}-s_n-t_n}_{\br_{n}-s_n-t_n}\right) \otimes \alt^{s_{1}, t_{1}}\otimes \cdots \otimes \alt^{s_{n}, t_{n}}.
\end{align}
Note that, the spaces $L^{2}\Lambda^{k, l}(\mathcal{I}^n)$ and $L^{2}\Lambda^{k, l}_{\otimes n}(\mathcal I^{n})$ coincide, see~\cite[Example 3.7]{Hackbusch14}. 
Equations~\eqref{def:L2} and~\eqref{def:Sp2} give a characterization that separates the coefficients and the alternating form basis. 

$\mathscr{S}_{\vbr}^{\vp}\Lambda^{k, l}$ is the space of $(k+l)$-linear maps on $\mathbb R^n$ which alternate in the first $k$ and last $l$ variables. It is a direct sum corresponding to the components of the vector proxies.
Thus, each component of the direct sum corresponds to a particular basis form $dx^\sigma\otimes dx^\tau\in \alt^{k,l}$ according to equation~\eqref{eq:form-basis} together with its coefficient space.
The coefficient space is obtained from a ``root'' space $\mathcal{S}^{p_1}_{\br_1}\otimes\dots\otimes\mathcal{S}^{p_n}_{\br_n}$
by reducing the order and regularity by one in each direction $i$ where $i$ is contained either in $\sigma$ or in $\tau$. Accordingly, they are reduced by two if $i$ is contained in $\sigma$ and in $\tau$.
This means in order for the  discrete spaces to be compatible with the $S$ operators, i.e., for $S$ to map the discrete space $\mathscr{S}_{\vbr}^{\vp}\Lambda^{k, l}$ to the right one $\mathscr{S}_{\vbr}^{\vp}\Lambda^{k+1, l-1}$, the polynomial degrees in direction $i$ should always involve the sum $(s_i+t_i)$. Thus, we have polynomial spaces of the form $\mathcal{S}^{p_{i}-s_i-t_i}_{\br_{i}-s_i-t_i}$.\\
This structure of the spaces induces symmetry in the discretization of the BGG diagram \eqref{diagram-nD}, such that the $(k, l)$-th space is isomorphic to the $(l, k)$-th space, as the polynomial coefficients in the definition of $\mathscr{S}_{\vbr}^{\vp}\Lambda^{k, l}$ are invariant when we switch $k$ and $l$.
For example, the first row of \eqref{diagram-nD} is a standard tensor product de Rham complex \cite{Arnold-Boffi-Bonizzoni,christiansen2009foundations}, and {\it so is the first column}. As a more specific example, the 3D elasticity complex starts with a (0, 1)-form (the first space in the second row of \eqref{diagram-4rows}), indicating that our discrete elasticity complex starts from a N\'ed\'elec space. See Section \ref{sec:examples_fe} for more details.

The exterior derivatives $d^{k}: \mathcal{S}_{\br}^{p}\Lambda^{k, l}(\mathcal{I})\to \mathcal{S}_{\br-1}^{p-1}\Lambda^{k+1, l}(\mathcal{I})$ follow from the standard definition. These operators extend naturally to $\mathscr{S}_{\vbr}^{\vp}\Lambda^{k, l}$ with Cartesian and tensor products (see \cite{Arnold-Boffi-Bonizzoni}), yielding $d^k\colon \mathscr{S}_{\vbr}^{\vp}\Lambda^{k, l}\rightarrow \mathscr{S}_{\vbr}^{\vp}\Lambda^{k+1, l}$ given by
\begin{align*}
d^{k}(u_{1}\otimes \cdots \otimes u_{n})=\sum_{s=1}^{n}(-1)^{|\bm i|_{s-1}}&(u_{1}\otimes \cdots\otimes du_{s}\otimes \cdots\otimes u_{n}),
\end{align*}
for all $u_{l}\in \mathcal{S}^{p_{l}-i_{l}-j_{l}}_{\br_{l}-i_{l}-j_{l}}\Lambda^{i_{l}, j_{l}}(\mathcal{I})$, with $\bm i\in \charset_k$, and $\bm j\in \charset_l$.

\subsection{Tensor product BGG complexes}

To derive the BGG complexes, we first establish a BGG diagram with the spaces obtained above by means of the tensor product construction,
\begin{equation}
 \label{BGGdiagram-general}
\begin{tikzcd}
\cdots \arrow{r} &\mathscr{S}_{\vbr}^{\vp}\Lambda^{i-1, J-1} \arrow{r}{d} &\mathscr{S}_{\vbr}^{\vp}\Lambda^{i, J-1} \arrow{r}{d} & \mathscr{S}_{\vbr}^{\vp}\Lambda^{i+1, J-1} \arrow{r}{} & \cdots \\
\cdots \arrow{r}\arrow[ur]&\mathscr{S}_{\vbr}^{\vp}\Lambda^{i-1, J}
\arrow{r}{d} \arrow[ur, "S^{i-1, J}"]&
\mathscr{S}_{\vbr}^{\vp}\Lambda^{i, J}  \arrow{r}{d} \arrow[ur, "S^{i, J}"]&\mathscr{S}_{\vbr}^{\vp}\Lambda^{i+1, J}  \arrow{r}{}\arrow[ur] &\cdots.
 \end{tikzcd}
\end{equation}

Now, we verify that it satisfies the conditions in Section \ref{sec:review-bgg}, i.e., commutativity $DS=-S\tilde{D}$ and the injectivity/surjectivity conditions.  
Since $\mathscr{S}_{\vbr}^{\vp}\Lambda^{\bs}$ are subspaces of $L^2\Lambda^{\bs}$, the commutativity follows from the continuous level. We will further verify that $S^{k, l}=I\otimes s^{k, l}$ maps between the discrete spaces in the diagram.
We start deriving an explicit expression of the operator $s^{k, l}$ on tensor product alternating forms.

\begin{lemma}\label{lem:s-tensorpro}
Let $\bm i\in \charset_{k}$, $\bm j\in \charset_{l}$ and $\omega_{h}\in \alt^{i_{h}, j_h}\R$ for $h=1, \cdots, n$. Then, $\omega=\omega_{1}\otimes \cdots \otimes \omega_{n}\in\alt^{k}\mathbb{R}^n\otimes \alt^l\mathbb{R}^n$ and there holds
\begin{gather*}
    s^{k, l}\omega
    = \sum_{h=1}^n (-1)^{|\bm{i}|_{h}+|\bm{j}|_{h}}
    \omega_1 \wedge \cdots \wedge 
    s^{i_h,j_h} \omega_h \wedge \cdots \wedge \omega_n.
\end{gather*}
\end{lemma}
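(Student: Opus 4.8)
The plan is to prove the formula by reducing $s^{k,l}$ acting on a decomposable tensor product form $\omega = \omega_1 \otimes \cdots \otimes \omega_n$ to a sum of local applications of $s^{i_h, j_h}$, using the definition \eqref{def:s} directly on basis forms. Since both sides are linear in $\omega$, it suffices to take each $\omega_h = \alpha_h (dx)^{s_h} \otimes (dx)^{t_h}$ to be a basis one-form (up to coefficients $\alpha_h$, which play no role in the algebraic identity and can be dropped), where $\bm s = \bm i \in \charset_k$ and $\bm t = \bm j \in \charset_l$. Then, by \eqref{eq:form-basis}, $\omega = dx^\sigma \otimes dx^\tau$ where $\sigma \in \Sigma(k,n)$ has characteristic vector $\bm i$ and $\tau \in \Sigma(l,n)$ has characteristic vector $\bm j$.

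Next I would apply the basis formula \eqref{eq:s_iJ}: $s^{k,l}(dx^\sigma \otimes dx^\tau) = \sum_{m=1}^{l} (-1)^{m-1} dx^{\tau_m} \wedge dx^\sigma \otimes dx^{\tau_1} \wedge \cdots \wedge \widehat{dx^{\tau_m}} \wedge \cdots \wedge dx^{\tau_l}$. The task is then to match each term on the right-hand side of this sum with the corresponding term in the claimed formula. For a fixed $m$, let $h$ be the unique index in $\{1,\dots,n\}$ with $\tau_m = h$; this is exactly the index where $j_h = 1$. In the tensor product picture, the factor $\omega_h = dx^h \otimes dx^h$ (with $i_h = 0$, since $h \notin \sigma$ in this subcase — I will have to also handle $i_h = 1$) gets acted on by $s^{i_h, j_h} = s^{0,1}$, producing $dx^h \otimes 1$, while all other factors are untouched. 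So $\omega_1 \wedge \cdots \wedge s^{i_h,j_h}\omega_h \wedge \cdots \wedge \omega_n$ corresponds to moving $dx^h$ from the second tensor slot to the first. The key bookkeeping step is the sign: I must show that the sign $(-1)^{m-1}$ from \eqref{eq:s_iJ}, which involves how far $\tau_m$ sits inside $\tau$, equals the sign $(-1)^{|\bm i|_h + |\bm j|_h}$ coming from the reordering needed to bring $dx^{\tau_m}$ to the front of $dx^\sigma$ (contributing $(-1)^{|\bm i|_{h-1}}$ or so) combined with the Koszul sign from extracting $dx^{\tau_m}$ out of $dx^\tau$ (contributing $(-1)^{|\bm j|_{h-1}}$ or similar). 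Working modulo $2$, $|\bm i|_h = |\bm i|_{h-1} + i_h$ and $|\bm j|_h = |\bm j|_{h-1} + j_h$, and since $j_h = 1$ for the active index, these exponents align after a short computation. Separately I should note that the terms in the claimed sum for which $j_h = 0$ vanish, because $s^{i_h, 0} \equiv 0$ by the convention in Remark~\ref{rem:s}, so the sum over $h$ effectively restricts to the $l$ indices where $j_h = 1$, matching the range of $m$.

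The main obstacle will be the careful sign tracking: keeping consistent conventions for where $dx^{\tau_m}$ lands relative to the already-present factors of $dx^\sigma$ at positions before $h$, and reconciling the two different indexings (the ordinal position $m$ within $\tau$ versus the absolute position $h$ within $\{1,\dots,n\}$). I expect this to require writing out both $dx^\sigma = (dx^1)^{i_1} \wedge \cdots \wedge (dx^n)^{i_n}$ and $dx^\tau = (dx^1)^{j_1} \wedge \cdots \wedge (dx^n)^{j_n}$ in the ordered-by-absolute-index form and counting transpositions explicitly. An alternative, cleaner route would be to avoid \eqref{eq:s_iJ} entirely and instead verify the Leibniz-type formula directly from the intrinsic definition \eqref{def:s} by testing against vectors $v_0, \dots, v_k, w_1, \dots, w_{l-1}$ chosen from the dual basis, exploiting how the tensor product form $\omega_1 \otimes \cdots \otimes \omega_n$ evaluates as a product over the factors; but this essentially encodes the same combinatorics. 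Either way, once the sign identity $(-1)^{m-1} = (-1)^{|\bm i|_h + |\bm j|_h}$ is established for the active index $h = \tau_m$, the lemma follows by summing over $m$ (equivalently over the active $h$) and padding with the vanishing inactive terms. For completeness I would also remark that the case where the factor $\omega_h$ already carries $i_h = 1$ (so $\omega_h = dx^h \otimes dx^h$ is impossible since $i_h$ and $j_h$ would then both be $1$, i.e. the same $dx^h$ appears in both slots) is handled identically, with $s^{1,1}\omega_h$ computed from \eqref{def:s} and the sign adjusted by the extra $i_h = 1$ contribution already accounted for in $|\bm i|_h$.
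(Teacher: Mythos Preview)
Your approach is essentially the same as the paper's: reduce to basis forms $\omega_h = (dx^h)^{i_h}\otimes(dx^h)^{j_h}$, apply \eqref{eq:s_iJ}, and track the sign picked up when moving $dx^{\tau_m}$ into its sorted slot among the factors $\omega_1,\dots,\omega_n$. The identification $m=|\bm j|_h$ for $h=\tau_m$ and the observation that terms with $j_h=0$ vanish by Remark~\ref{rem:s} are exactly the bookkeeping the paper uses.

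One point to clean up: your treatment of the case $i_h=j_h=1$ is muddled. The form $\omega_h=dx^h\otimes dx^h\in\alt^{1,1}\R$ is \emph{not} impossible; this space is one-dimensional and this is its basis element. What actually happens is that both sides of the claimed identity vanish at such an index: on the right, $s^{1,1}\omega_h\in\alt^{2,0}\R=0$; on the left, the corresponding term in \eqref{eq:s_iJ} has $dx^{\tau_m}\wedge dx^\sigma=dx^h\wedge dx^\sigma=0$ since $h\in\sigma$. The paper dispatches this in one line by observing that a two-form on $\R$ is zero, so for nonvanishing contributions one may assume $i_h=0$ whenever $j_h=1$. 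With that case properly handled, your sign computation goes through cleanly since $|\bm i|_h=|\bm i|_{h-1}$ at the active indices.
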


\begin{proof}
We prove this lemma for basis forms and conclude the general statement by linearity. Hence, let $\omega_h = (dx^h)^{i_h} \otimes (dx^h)^{j_h}$. Note that, as observed in Remark~\ref{rem:s}, there holds
\begin{gather}
    s^{i_h,j_h}\omega_h=
    \begin{cases}
    dx^h\otimes 1 &\text{if } i_h=0, j_h=1\\
    0&\text{else.}
    \end{cases}
\end{gather}
Using~\eqref{eq:s_iJ}, and recalling~\eqref{eq:Iell}, we find:
\begin{align*}
    &s^{k, l}\omega
    =s^{k, l}( (dx^1)^{i_1} \wedge \cdots \wedge (dx^n)^{i_n} ) \otimes
    ( (dx^1)^{j_1} \wedge \cdots \wedge (dx^n)^{j_n} )\\
    &\quad= \sum_{h=1}^n (-1)^{|\bm{i}|_{h}+1} \delta_{1,j_h}
    \bigl( (dx^h)^{j_h} \wedge (dx^1)^{i_1} \wedge \cdots \wedge (dx^n)^{i_n}\bigr)
    \\ 
    &\quad\qquad\otimes \bigl( (dx^1)^{j_1} \wedge \cdots \wedge \widehat{(dx^h)^{j_h}} \wedge \cdots \wedge (dx^n)^{j_n} \bigr)
    \\
    &\quad = \sum_{h=1}^n (-1)^{|\bm{i}|_{h}+1} \delta_{1,j_h}
    \bigl( (dx^h)^{j_h} \otimes 1\bigr) \wedge \omega_1
    \wedge \cdots
    \wedge \bigl( (dx^h)^{i_h} \otimes 1 \bigr) \wedge \cdots \wedge 
    \omega_n \\
    &\quad = \sum_{h=1}^n (-1)^{|\bm{i}|_{h}+|\bm{j}|_{h}} \delta_{1,j_h}
    \omega_1 \wedge \cdots\wedge
    \bigl( (dx^h)^{j_h} \otimes 1\bigr)
    \wedge \bigl( (dx^h)^{i_h} \otimes 1 \bigr)
    \wedge \cdots \wedge \omega_n.
\end{align*}
Note that 
\begin{multline*}
\delta_{1,j_h}\bigl( (dx^h)^{j_h} \otimes 1 \bigr)
\wedge \bigl( (dx^h)^{i_h} \otimes 1 \bigr)
= \delta_{1,j_h}\bigl((dx^h)^{j_h} \wedge (dx^h)^{i_h}\bigr) \otimes 1
\\= s^{i_h,j_h} \bigl((dx^h)^{i_h} \otimes (dx^h)^{j_h}\bigr)
=  s^{i_h,j_h}\omega_h.
\end{multline*}
In particular, for $j_h=1$, we can assume $i_h=0$, since otherwise we would have a two-form on $\R$, which must be zero. Hence, the lemma is proven.
\end{proof}

\begin{remark}
  Let $u = q \otimes \omega \in \mathscr{S}_{\vbr}^{\vp}\Lambda^{k, l}$. Since $S^{k, l}=I\otimes s^{k, l}$ as in the continuous case, $S^{k,l} u = q \otimes s^{k,l}\omega$. By the definition in~\eqref{def:Sp2}, the polynomial spaces on the left and on the right of $S^{k,l}$ are the same since adding to $i$ subtracts from $j$. This implies that the induced mapping between the coefficient spaces is bijective. Hence, $S^{k, l}\mathscr{S}_{\vbr}^{\vp}\Lambda^{k, l}\subset \mathscr{S}_{\vbr}^{\vp}\Lambda^{k+1, l-1}$ and $S^{k,l}$ inherits surjectivity from $s^{k,l}$.
\end{remark}
 
Following the BGG recipe in Section \ref{sec:review-bgg}, and exploiting the bijectivity of $S^{J-1,J}$, we obtain the discrete spaces
$$
\Upsilon_{h}^{i}:=
\begin{cases}
 \ran(S^{i-1, J})^{\perp}\subset \mathscr{S}_{\vbr}^{\vp}\Lambda^{i, J-1}, \quad i<J;\\
 \ker(S^{i, J})\subset \mathscr{S}_{\vbr}^{\vp}\Lambda^{i, J}, \quad i\geq J, 
\end{cases}
$$
and the operators
$$
\mathscr D^{i}:=
\begin{cases}
 P_{\ran(S^{i-1, J})^{\perp}}d^{i}, \quad i\leq J-1;\\
 d^{i}\circ (S^{i-1, J})^{-1}\circ d^{i}, \quad i=J;\\
d^{i}, \quad i\geq J+1,
\end{cases}
$$
where $h$ is used as a generic index for discrete spaces. The derived discrete BGG complex is 
\begin{equation}\label{reduced-complex}
\begin{tikzcd}
0 \arrow{r}&\Upsilon^{0}_h\arrow{r}{\mathscr{D}^{0}} &\Upsilon^{1}_h \arrow{r}{\mathscr{D}^{1}} &\cdots \arrow{r}{\mathscr{D}^{n-1}} &\Upsilon^{n}_h\arrow{r}& 0.
 \end{tikzcd}
\end{equation}

Thus, we have the algebraic setup in Section~\ref{sec:review-bgg} with the discrete spaces $\mathscr{S}_{\vbr}^{\vp}\Lambda^{k, l}$. 
Following \cite[Theorem 6]{arnold2021complexes}, the main conclusion is the cohomology of the derived BGG complex.

\begin{theorem}\label{thm:ineq-dim}
The dimension of cohomology of \eqref{reduced-complex} is bounded by that of the input complexes, i.e.,
\begin{equation}\label{ineq-dim}
\dim \mathscr{H}^k(\Upsilon^\bs_h, \mathscr{D}^\bs)\leq \dim \mathscr{H}^k(\mathscr{S}_{\vbr}^{\vp}\Lambda^{\bs, J-1}, {d}^\bs)+\dim \mathscr{H}^k(\mathscr{S}_{\vbr}^{\vp}\Lambda^{\bs, J}, {d}^\bs).
\end{equation}
\end{theorem}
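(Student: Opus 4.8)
The plan is to invoke the abstract BGG theory of \cite{arnold2021complexes} directly, since we have already assembled all the hypotheses. The cohomology bound \cite[Theorem 6]{arnold2021complexes} states that for any BGG diagram satisfying the commutativity relation $D^{j+1}S^j = -S^{j+1}\tilde D^j$ together with the injectivity/surjectivity pattern around the index $J$, the cohomology of the derived BGG complex is bounded in dimension by the sum of the cohomologies of the two input rows. Thus the whole task reduces to checking that the discrete diagram \eqref{BGGdiagram-general}, restricted to the two relevant rows indexed by $J-1$ and $J$ of the full tensor-product diagram, is a legitimate BGG diagram in the sense of Section~\ref{sec:review-bgg}.

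First I would verify that the two rows are complexes with the exterior derivative $d$: this is exactly the tensor-product de~Rham property established via Assumption~\ref{assump:1Dcomplex} and the K\"unneth-type construction of Section~\ref{sec:tsp_extderivs}, i.e.\ the rows of \eqref{BGGdiagram-general} are the tensor-product de~Rham complexes built from \eqref{1D-ijcomplex} (see \cite{Arnold-Boffi-Bonizzoni}). Second, I would confirm that $S^{k,l} = I\otimes s^{k,l}$ genuinely maps $\mathscr{S}_{\vbr}^{\vp}\Lambda^{k, l}$ into $\mathscr{S}_{\vbr}^{\vp}\Lambda^{k+1, l-1}$: this is precisely the content of Lemma~\ref{lem:s-tensorpro} together with the Remark immediately following it, which observes that the coefficient spaces match because passing from $(k,l)$ to $(k+1,l-1)$ leaves each $s_i+t_i$ unchanged, so the induced map on coefficients is the identity and $S^{k,l}$ inherits surjectivity from $s^{k,l}$. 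Third, commutativity $DS = -S\tilde D$ holds because $\mathscr{S}_{\vbr}^{\vp}\Lambda^{\bs}\subset L^2\Lambda^{\bs}$ and the relation already holds at the continuous level (it is purely a statement about how $d$ and the algebraic operator $s$ interact, independent of the subspace). Fourth, I would pin down the injectivity/surjectivity of the $S^{i,J}$: for the chosen index $J$ one needs $S^{J-1,J}$ bijective, $S^{i,J}$ injective for $i<J$ and surjective for $i\ge J$; since the underlying algebraic fact holds for $s^{i,J}$ on $\alt^{i,J}\R^n$ and $S^{i,J}$ acts as $s^{i,J}$ tensored with the (bijective) coefficient identification, these properties transfer verbatim to the discrete spaces.

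Having checked these four points, the discrete diagram falls under the hypotheses of \cite[Theorem 6]{arnold2021complexes}, and applying that theorem to the pair of rows indexed $J-1$ and $J$ yields exactly \eqref{ineq-dim}, with the two terms on the right being the cohomologies of $(\mathscr{S}_{\vbr}^{\vp}\Lambda^{\bs, J-1}, d^\bs)$ and $(\mathscr{S}_{\vbr}^{\vp}\Lambda^{\bs, J}, d^\bs)$ respectively.

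I expect the only real obstacle to be bookkeeping: making sure the index conventions of \eqref{BGGdiagram-general} line up with those of the abstract theorem (in particular that the connecting maps go \emph{upward} from the $J$-row to the $(J-1)$-row, and that the reindexing implicit in \eqref{defUps}–\eqref{Dder} versus the displayed $\Upsilon_h^i$, $\mathscr D^i$ is consistent), and confirming that the surjectivity of $d^0$ in one dimension indeed propagates to surjectivity of the last map in each tensor-product row so that the rows are genuine complexes terminating in $0$ with the expected cohomology placement. None of this requires new ideas beyond what is already in the excerpt; it is an assembly of Lemma~\ref{lem:s-tensorpro}, the tensor-product de~Rham machinery, and the cited abstract result.
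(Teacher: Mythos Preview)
Your proposal is correct and follows essentially the same approach as the paper: the paper does not give a separate proof of this theorem but simply observes that the preceding discussion (Lemma~\ref{lem:s-tensorpro} and the remark after it, plus the inherited commutativity $DS=-S\tilde D$ from the continuous level) verifies the hypotheses of \cite[Theorem~6]{arnold2021complexes}, and then states the conclusion. Your write-up is in fact more explicit about the four verification steps than the paper itself.
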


\begin{remark}
The construction in \cite{arnold2021complexes} has two levels. First, by the commutativity and the injectivity/surjectivity condition of $S^\bs$, we can conclude with an inequality of dimension as Theorem \ref{thm:ineq-dim}. Second, if more structures are available (referred to as the $K$ operators in \cite{arnold2021complexes} satisfying $S=dK-Kd$), then the inequality becomes an equality. All the examples in this paper satisfy this further condition on the continuous level. Nevertheless, whether \eqref{ineq-dim} is an equality (thus reflecting the correct cohomology) or not is not clear at this stage as the $K$ operators may not map between the right discrete spaces.
\end{remark}

\subsection{Quasi-interpolation operators}\label{sec:quasi_interp}

Interpolation operators are an important theoretical tool for verifying the convergence of numerical schemes in finite element methods and isogeometric analysis. In this section, we obtain interpolation operators for the tensor product BGG complexes \eqref{reduced-complex} from versions in 1D. The key assumption is that we have these operators for both rows in a 1D BGG diagram in \eqref{1D:BGG-diagram}, and the operators satisfy certain conditions when we connect the two rows (\eqref{pis-spi} below). This will be highlighted in Assumption \ref{assump:interp} below. Again, the discussions in this section are abstract in the sense that no splines or finite elements are involved. 
The main conclusion is that we can input bounded commuting maps from continuous spaces to discrete spaces in 1D, and derive the corresponding BGG version in $n$D by using only the algebraic structures of tensor products.  

The discussions will be based on the following assumption.
\begin{assumption}\label{assump:interp}
There exists $\pi^{i, j}: L^{2}\Lambda^{i, j}(\mathcal{I})\to \mathscr{S}^{p}_{\br}\Lambda^{i, j}(\mathcal{I})$, where $i, j=0, 1$ in 1D, that is $L^2$-bounded
$$
\|\pi^{i, j}u\|\leq C\|u\|,
$$
satisfying the commutativity condition
\begin{equation}\label{1D:commutdpi}
  d^{i}\pi^{i, j}=\pi^{i+1, j}d^{i}.  
\end{equation}
 Moreover, we require that 
\begin{equation}\label{pis-spi}
S^{i, j}\pi^{i, j}=\pi^{i+1, j-1}S^{i, j}.
\end{equation}
\end{assumption}
Note that the only nontrivial case for \eqref{pis-spi} is $i=0$ and $j=1$.

In vector proxy,  $S^{0, 1}$ is just identity and $\mathcal{S}^{p-1}_{\br-1}\Lambda^{0, 1}$ is identical to $\mathcal{S}^{p-1}_{\br-1}\Lambda^{1, 0}$. The commutativity of \eqref{pis-spi} would follow from the arguments in Section~\ref{sec:tsp_extderivs} once we use equivalent  quasi-interpolation operators for the two spaces in the 1D BGG diagram connected by the $S^0$ operator. Thus, to satisfy Assumption \ref{assump:interp}, we need to have a consistent set of three bounded commuting quasi-interpolation operators. 

\begin{remark}
\label{rem:pi_ext}
For later convenience, we extend the quasi-interpolation operator $\pi^{i, j}$ by 0, whenever applied to differential forms with index not equal to $(i,j)$. For example, for $i=0$ and $j=1$, $\pi^{0, 1}$ is defined on $L^{2}\Lambda^{0, 1}(\mathcal{I})$ according to Assumption~\ref{assump:interp}, and it is extended to 0, whenever applied to $L^{2}\Lambda^{0, 0}(\mathcal{I})$, $L^{2}\Lambda^{1, 0}(\mathcal{I})$ or $L^{2}\Lambda^{1, 1}(\mathcal{I})$.
\end{remark}

Making use of Remark~\ref{rem:pi_ext} and following~\cite{bonizzoni2021h}, we give the following definition.
\begin{definition}
Given the quasi-interpolation operators in 1D, we define the tensor product quasi-interpolation operator in $n$ dimensions for $k,l=0,\ldots,n$  
$$
\pi^{k,l}_{\otimes n}\colon L^{2}\Lambda^{k, l}_{\otimes n}\to \mathscr{S}^{\vp}_{\vbr}\Lambda^{k, l}
$$ 
as follows:
\begin{gather}
\label{eq:pi_tens}
    \pi^{k,l}_{\otimes n} \coloneqq 
    \sum_{\substack{(i_{i}, \cdots, i_{n})\in \charset_{k}\\ (j_{1}, \cdots, j_{n})\in \charset_{l}}} \pi^{i_1,j_1}\otimes\cdots\otimes \pi^{i_n,j_n}.
\end{gather}
\end{definition}

Consider the particular case where $\pi_{\otimes n}^{k, l}$ is applied to rank one tensor product differential forms. Let $\omega=\omega_{1}\otimes \cdots\otimes \omega_{n}\in L^2\Lambda^{k,l}_{\otimes n}$, with $\omega_{t}\in L^{2}\Lambda^{i_{t}, j_{t}}(\mathcal{I})$, $\bm i=(i_1, \cdots, i_n)\in X_k$, $\bm j=(j_1, \cdots, j_n)\in X_l$. Then, we get
\begin{align*}
\pi_{\otimes n}^{k, l}\omega
&:=(\pi^{i_{1}, j_{1}}\otimes \cdots \otimes \pi^{i_{n}, j_{n}})(\omega_{1}\otimes\cdots\otimes \omega_{n})\\
&=\pi^{i_{1}, j_{1}}\omega_{1}\otimes \cdots \otimes \pi^{i_{n}, j_{n}}\omega_{n}.
\end{align*}

Next, we will show that the tensor product operators defined in~\eqref{eq:pi_tens} are bounded and commute with the differential operators $\mathscr{D}^\bs$ in the BGG complexes \eqref{reduced-complex}. Note that the operators $\mathscr{D}^\bs$ are a composition of $d^\bs$, $P_\ran(S)^\perp$ and $S^{-1}$, depending on the indices \eqref{Dder}. To prove that $\pi_\otimes^\bs$ commutes with $\mathscr{D}^\bs$, we will show that $\pi_\otimes^\bs$ commutes with each one of these operators. 
This will be established in Lemmas~\ref{lem:commut-pi-d1} to~\ref{lem:commut-pi-d3} below.

\begin{lemma}\label{lem:commut-pi-d1}
$\pi_{\otimes n}^\bs$ commutes with $d^\bs$, i.e., 
\begin{equation}\label{dpi}
    d^k\pi_{\otimes n}^{k, l}=\pi_{\otimes n}^{k+1, l}d^k.
\end{equation}
\end{lemma}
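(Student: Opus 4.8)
The statement is the commutativity $d^k \pi_{\otimes n}^{k,l} = \pi_{\otimes n}^{k+1,l} d^k$ between the tensor-product quasi-interpolation and the exterior derivative on the tensor-product spline/FE space. Since both $d^k$ and $\pi_{\otimes n}^{k,l}$ are defined on the direct sum $L^2\Lambda^{k,l}_{\otimes n}$ by extending from their action on rank-one tensors, and both are linear, it suffices to verify the identity on a rank-one tensor product form $\omega = \omega_1 \otimes \cdots \otimes \omega_n$ with $\omega_t \in L^2\Lambda^{i_t,j_t}(\mathcal I)$, $\bm i \in X_k$, $\bm j \in X_l$. So the whole proof is a bookkeeping computation comparing two Leibniz-type expansions.

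First I would write out $d^k$ applied to $\omega$ using the formula in Section~\ref{sec:tsp_extderivs}:
\[
d^k(\omega_1 \otimes \cdots \otimes \omega_n) = \sum_{s=1}^n (-1)^{|\bm i|_{s-1}} \, \omega_1 \otimes \cdots \otimes d\omega_s \otimes \cdots \otimes \omega_n,
\]
where $d\omega_s \in \mathcal S^{p_s - i_s - j_s}_{\br_s - i_s - j_s}\Lambda^{i_s+1,j_s}(\mathcal I)$ is the 1D exterior derivative (nonzero only when $i_s = 0$). Then apply $\pi_{\otimes n}^{k+1,l}$: by the rank-one formula, $\pi_{\otimes n}^{k+1,l}$ acts on each summand — which is a rank-one tensor with multi-index $(\bm i + \bm e_s, \bm j)$ — by the single tensor factor $\pi^{i_1,j_1} \otimes \cdots \otimes \pi^{i_s+1,j_s} \otimes \cdots \otimes \pi^{i_n,j_n}$ (all the other summands in the defining sum~\eqref{eq:pi_tens} kill it, using Remark~\ref{rem:pi_ext} that $\pi^{i,j}$ annihilates forms of index $\ne (i,j)$). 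So the right-hand side becomes
\[
\sum_{s=1}^n (-1)^{|\bm i|_{s-1}} \, \pi^{i_1,j_1}\omega_1 \otimes \cdots \otimes \pi^{i_s+1,j_s}(d\omega_s) \otimes \cdots \otimes \pi^{i_n,j_n}\omega_n.
\]
Symmetrically, on the left-hand side $\pi_{\otimes n}^{k,l}\omega = \pi^{i_1,j_1}\omega_1 \otimes \cdots \otimes \pi^{i_n,j_n}\omega_n$, and then $d^k$ expands by the same Leibniz rule into $\sum_s (-1)^{|\bm i|_{s-1}} \pi^{i_1,j_1}\omega_1 \otimes \cdots \otimes d(\pi^{i_s,j_s}\omega_s) \otimes \cdots \otimes \pi^{i_n,j_n}\omega_n$. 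The two sums match term by term once we invoke the 1D commutativity $d^{i_s}\pi^{i_s,j_s} = \pi^{i_s+1,j_s}d^{i_s}$ from Assumption~\ref{assump:interp}, equation~\eqref{1D:commutdpi}.

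**Main obstacle.** The computation itself is routine; the one point that needs care is making sure the sign exponents $|\bm i|_{s-1}$ genuinely agree on both sides. On the left, $d^k$ acts on $\pi_{\otimes n}^{k,l}\omega$, which is a form of index $(\bm i, \bm j)$ since $\pi^{i_t,j_t}$ preserves the index; hence the sign in position $s$ is $(-1)^{|\bm i|_{s-1}}$. On the right, $d^k$ acts first, producing a summand of index $(\bm i + \bm e_s, \bm j)$ with sign $(-1)^{|\bm i|_{s-1}}$ (the prefix sum up to $s-1$ is unchanged by bumping position $s$), and $\pi_{\otimes n}^{k+1,l}$ does not alter signs — it just selects the matching tensor factor. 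So the exponents coincide, and the only subtlety is confirming that the selection of the single surviving term in $\pi_{\otimes n}^{k+1,l}$ really is the one with $i_s$ replaced by $i_s+1$, which follows from Remark~\ref{rem:pi_ext}. I would also remark that when $i_s = 1$ the 1D derivative $d\omega_s = 0$, so that summand vanishes on both sides consistently, and no spurious two-forms on $\mathcal I$ appear. That completes the proof.

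\begin{proof}
Both $d^k$ and $\pi_{\otimes n}^{k,l}$ are linear and are defined on the direct sum $L^2\Lambda^{k,l}_{\otimes n}$ by their action on rank-one tensor product forms. Hence it suffices to verify~\eqref{dpi} on a single rank-one form $\omega = \omega_1 \otimes \cdots \otimes \omega_n$, with $\omega_t \in L^2\Lambda^{i_t,j_t}(\mathcal I)$, $\bm i = (i_1,\dots,i_n) \in X_k$, $\bm j = (j_1,\dots,j_n) \in X_l$.

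Since $\pi^{i_t,j_t}$ maps $L^2\Lambda^{i_t,j_t}(\mathcal I)$ into $\mathscr S^{p_t}_{\br_t}\Lambda^{i_t,j_t}(\mathcal I)$, i.e., preserves the index, the form $\pi_{\otimes n}^{k,l}\omega = \pi^{i_1,j_1}\omega_1 \otimes \cdots \otimes \pi^{i_n,j_n}\omega_n$ again carries the multi-index $(\bm i, \bm j)$. Applying the Leibniz-type formula for $d^k$ from Section~\ref{sec:tsp_extderivs}, we obtain
\begin{align*}
d^k \pi_{\otimes n}^{k,l}\omega
&= \sum_{s=1}^n (-1)^{|\bm i|_{s-1}}\, \pi^{i_1,j_1}\omega_1 \otimes \cdots \otimes d\bigl(\pi^{i_s,j_s}\omega_s\bigr) \otimes \cdots \otimes \pi^{i_n,j_n}\omega_n.
\end{align*}

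On the other hand,
\begin{align*}
d^k \omega = \sum_{s=1}^n (-1)^{|\bm i|_{s-1}}\, \omega_1 \otimes \cdots \otimes d\omega_s \otimes \cdots \otimes \omega_n,
\end{align*}
where the $s$-th summand, when nonzero, is a rank-one form of multi-index $(\bm i + \bm e_s, \bm j) \in X_{k+1}\times X_l$; note that $d\omega_s = 0$ unless $i_s = 0$, in which case $d\omega_s \in L^2\Lambda^{i_s+1,j_s}(\mathcal I)$ and the $s$-th entry of $\bm i + \bm e_s$ is $1$, so no two-forms on $\mathcal I$ arise. Applying $\pi_{\otimes n}^{k+1,l}$ as defined in~\eqref{eq:pi_tens} to this summand, every term $\pi^{i_1',j_1} \otimes \cdots \otimes \pi^{i_n',j_n}$ with $(i_1',\dots,i_n') \in X_{k+1}$ annihilates it by Remark~\ref{rem:pi_ext}, except the unique one with $i_t' = i_t$ for $t \ne s$ and $i_s' = i_s + 1$. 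Moreover, bumping position $s$ does not change the prefix sum $|\bm i|_{s-1}$, so the signs are unaffected. Therefore
\begin{align*}
\pi_{\otimes n}^{k+1,l} d^k \omega
&= \sum_{s=1}^n (-1)^{|\bm i|_{s-1}}\, \pi^{i_1,j_1}\omega_1 \otimes \cdots \otimes \pi^{i_s+1,j_s}\bigl(d\omega_s\bigr) \otimes \cdots \otimes \pi^{i_n,j_n}\omega_n.
\end{align*}

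Comparing the two expressions, it remains to check that $d\bigl(\pi^{i_s,j_s}\omega_s\bigr) = \pi^{i_s+1,j_s}\bigl(d\omega_s\bigr)$ for each $s$. If $i_s = 1$, both sides vanish ($d\omega_s = 0$ and the left-hand side is a two-form on $\mathcal I$). If $i_s = 0$, this is precisely the 1D commutativity~\eqref{1D:commutdpi} of Assumption~\ref{assump:interp}. Hence the $s$-th summands agree for all $s$, which proves~\eqref{dpi} on rank-one forms and, by linearity, in general.
\end{proof}
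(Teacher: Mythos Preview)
Your proof is correct and follows essentially the same approach as the paper: reduce to rank-one tensor product forms, expand $d^k$ via the Leibniz formula, and invoke the 1D commutativity \eqref{1D:commutdpi}. The paper's proof is considerably terser---it simply states the Leibniz expansion and asserts the conclusion---whereas you have spelled out the action of $\pi_{\otimes n}^{k+1,l}$ on each summand (via Remark~\ref{rem:pi_ext}), verified the sign exponents, and handled the degenerate case $i_s=1$ explicitly; these are details the paper leaves implicit.
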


\begin{proof}
It is sufficient to show the result on rank one tensor product differential forms. By linearity and density, the conclusion holds for all elements in $L^2\Lambda_{\otimes n}^{k,l}$.
For any $\omega_{t}\in L^{2}\Lambda^{i_{t}, j_{t}}(\mathcal{I})$, $\bm i=(i_1, \cdots, i_n)\in X_k$,  $\bm j=(j_1, \cdots, j_n)\in X_l$, (see \cite[(31)]{bonizzoni2021h})
\begin{equation*}\label{dpicommute}
d^k(\omega_1 \otimes \omega_2 \otimes \cdots \otimes \omega_n)=\sum\limits_{t=1}^n (-1)^{|\bm{i}|_{t-1}} (\omega_1 \otimes \cdots \otimes d^{i_l}\omega_l \otimes \cdots \otimes \omega_n).
\end{equation*}
Then the conclusion follows from the commutativity \eqref{1D:commutdpi} in 1D.
\end{proof}

\begin{lemma}\label{lem:commut-pi-d2}
$\pi_{\otimes n}^\bs$ commutes with $S^\bs$, i.e., 
\begin{equation}\label{pis}
\pi_{\otimes n}^{k+1, l-1} S^{k, l}=S^{k, l}\pi_{\otimes n}^{k, l}, \quad \forall~ 0\leq k\leq n-1,\ 1\leq l\leq n.
\end{equation}
\end{lemma}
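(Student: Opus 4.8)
The plan is to mirror the proof of Lemma~\ref{lem:commut-pi-d1}: reduce by linearity and density to rank one tensor product forms, and then check the identity factor by factor using the one-dimensional compatibility \eqref{pis-spi}.

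First I would fix a rank one form $\omega=\omega_{1}\otimes\cdots\otimes\omega_{n}$ with $\omega_{h}\in L^{2}\Lambda^{i_{h},j_{h}}(\mathcal{I})$, $\bm i\in\charset_{k}$, $\bm j\in\charset_{l}$, and expand $S^{k,l}\omega$ using Lemma~\ref{lem:s-tensorpro}. Since $S^{k,l}=I\otimes s^{k,l}$ does not touch the $L^{2}$ coefficient, that lemma applies to forms with coefficients, giving
\[
S^{k,l}\omega=\sum_{h=1}^{n}(-1)^{|\bm i|_{h}+|\bm j|_{h}}\;\omega_{1}\otimes\cdots\otimes S^{i_{h},j_{h}}\omega_{h}\otimes\cdots\otimes\omega_{n}.
\]
By Remark~\ref{rem:s} the $h$-th summand vanishes unless $i_{h}=0,\,j_{h}=1$, in which case it is a rank one form whose characteristic vectors are those of $(\bm i,\bm j)$ with a $1$ moved from the $h$-th slot of $\bm j$ into the $h$-th slot of $\bm i$; in particular it lies in $L^{2}\Lambda^{k+1,l-1}_{\otimes n}$.

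Then I would apply $\pi_{\otimes n}^{k+1,l-1}$ from \eqref{eq:pi_tens} to this sum term by term. The extension-by-zero convention of Remark~\ref{rem:pi_ext} does the bookkeeping: only the single summand of \eqref{eq:pi_tens} whose index pair matches the summand at hand survives, so that $\pi_{\otimes n}^{k+1,l-1}$ acts on the $h$-th term as $\pi^{i_{1},j_{1}}\otimes\cdots\otimes\pi^{i_{h}+1,j_{h}-1}\otimes\cdots\otimes\pi^{i_{n},j_{n}}$. Invoking the hypothesis \eqref{pis-spi} on the $h$-th factor, $\pi^{i_{h}+1,j_{h}-1}S^{i_{h},j_{h}}=S^{i_{h},j_{h}}\pi^{i_{h},j_{h}}$, turns the $h$-th term into $\pi^{i_{1},j_{1}}\omega_{1}\otimes\cdots\otimes S^{i_{h},j_{h}}(\pi^{i_{h},j_{h}}\omega_{h})\otimes\cdots\otimes\pi^{i_{n},j_{n}}\omega_{n}$. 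Summing these over $h$ with the signs $(-1)^{|\bm i|_{h}+|\bm j|_{h}}$ is, by a second application of Lemma~\ref{lem:s-tensorpro}, exactly the expansion of $S^{k,l}\bigl(\pi^{i_{1},j_{1}}\omega_{1}\otimes\cdots\otimes\pi^{i_{n},j_{n}}\omega_{n}\bigr)=S^{k,l}\pi_{\otimes n}^{k,l}\omega$; the signs agree term by term because $\pi^{i_{h},j_{h}}$ preserves the index pair $(i_{h},j_{h})$, hence the partial sums $|\bm i|_{h}$ and $|\bm j|_{h}$ are unchanged.

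I do not expect a genuine obstacle here: the entire content is the reduction to rank one forms together with the 1D identity \eqref{pis-spi}, and Lemma~\ref{lem:s-tensorpro} supplies the tensorial expansion needed on both sides of \eqref{pis}. The only point that deserves care is the index/sign bookkeeping — confirming that the characteristic vectors and the signs produced by Lemma~\ref{lem:s-tensorpro} coincide before and after applying the (index-preserving) 1D quasi-interpolation, so that the two expansions match summand by summand and not merely after a reindexing.
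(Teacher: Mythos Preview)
Your proposal is correct and is exactly the approach the paper takes: the paper's proof is the one-liner ``the conclusion follows from Lemma~\ref{lem:s-tensorpro} and the commutativity \eqref{pis-spi} in Assumption~\ref{assump:interp},'' and your argument is a faithful unpacking of that sentence on rank one tensors. The sign/index bookkeeping you flag is indeed the only thing to check, and your observation that the one-dimensional $\pi^{i_h,j_h}$ preserves the index pair handles it.
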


\begin{proof}
The conclusion follows from Lemma \ref{lem:s-tensorpro} and the commutativity \eqref{pis-spi} in Assumption \ref{assump:interp}. 
\end{proof}

By Assumption \ref{assump:interp}, in 1D, we have $\pi^{i, j}: L^2\Lambda^{i, j}(\mathcal{I})\to \mathcal{S}_{\br}^p\Lambda^{i, j}(\mathcal{I}), ~i, j=0, 1$. Let $\{dx^{\sigma}\otimes dx^{\mu}\}_{\sigma\in \Sigma(i,n),\mu\in \Sigma(j,n)}$ be a basis of $\alt^{i, j}$. Then $\pi^{i, j}$ induces a unique map $\tilde{\pi}^{i, j}: L^2(\mathcal{I})\to \mathcal{S}_{\br}^p$ between the coefficients, 
satisfying 
\begin{equation}\label{pi-tilde}
\pi^{i, j}=\tilde{\pi}^{i, j}\otimes I.
\end{equation} 

\begin{lemma}
\label{lem:commut-pi-d3}
The following commutativity property holds:
\begin{equation}\label{piP}
\pi^{k, l}_{\otimes n} (I\otimes P_{\ran(s^{k-1, l+1})^{\perp}})= (I\otimes P_{\ran(s^{k-1, l+1})^{\perp}})\pi^{k, l}_{\otimes n}.
\end{equation}
\end{lemma}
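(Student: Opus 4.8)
The plan is to reduce the statement to a pointwise (algebraic) identity between the projection $P_{\ran(s^{k-1,l+1})^\perp}$ acting on $\alt^{k,l}\mathbb{R}^n$ and the coefficient-wise action of the tensor product quasi-interpolation, and then exploit the direct-sum decomposition of $\mathscr{S}_{\vbr}^{\vp}\Lambda^{k,l}$ into components indexed by characteristic vectors. First I would note that by linearity and density it suffices to check \eqref{piP} on rank-one tensor product forms $\omega = q\otimes\omega_1\otimes\cdots\otimes\omega_n$ with $\omega_t\in L^2\Lambda^{i_t,j_t}(\mathcal{I})$, $\bm i\in X_k$, $\bm j\in X_l$. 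On such a form, by \eqref{pi-tilde} and the definition \eqref{eq:pi_tens}, $\pi^{k,l}_{\otimes n}$ acts as $(\tilde\pi^{i_1,j_1}\otimes\cdots\otimes\tilde\pi^{i_n,j_n})$ on the coefficient and as the identity on the alternating-form part $dx^\sigma\otimes dx^\tau$ with $\bm s=\charvec(\sigma)=\bm i$, $\bm t=\charvec(\tau)=\bm j$. Thus $\pi^{k,l}_{\otimes n} = \sum_{\bm i\in X_k,\bm j\in X_l}(\tilde\pi^{i_1,j_1}\otimes\cdots\otimes\tilde\pi^{i_n,j_n})\otimes P_{(\bm i,\bm j)}$, where $P_{(\bm i,\bm j)}$ is the algebraic projection of $\alt^{k,l}$ onto the line spanned by $dx^\sigma\otimes dx^\tau$.

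The key observation is that both $\pi^{k,l}_{\otimes n}$ and $I\otimes P_{\ran(s^{k-1,l+1})^\perp}$ are "block diagonal" with respect to the same decomposition of $\alt^{k,l}$ into the lines $\spn\{dx^\sigma\otimes dx^\tau\}$. For $\pi^{k,l}_{\otimes n}$ this is manifest from the formula above. For $P_{\ran(s^{k-1,l+1})^\perp}$, I would argue that $\ran(s^{k-1,l+1})$ is spanned by basis elements $dx^\sigma\otimes dx^\tau$ (those in the image under the explicit formula \eqref{eq:s_iJ}), and hence both $\ran(s^{k-1,l+1})$ and its orthogonal complement are spanned by subsets of the canonical orthonormal basis $\{dx^\sigma\otimes dx^\tau\}$; therefore $P_{\ran(s^{k-1,l+1})^\perp}$ is diagonal in that basis, i.e. it multiplies each basis element by $0$ or $1$. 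Consequently, on the component indexed by $(\bm i,\bm j)$, $I\otimes P_{\ran(s^{k-1,l+1})^\perp}$ acts as $c_{(\bm i,\bm j)}\,\mathrm{id}$ with $c_{(\bm i,\bm j)}\in\{0,1\}$ on the coefficient, while $\pi^{k,l}_{\otimes n}$ acts as $(\tilde\pi^{i_1,j_1}\otimes\cdots\otimes\tilde\pi^{i_n,j_n})$ on the coefficient. Scalars commute with linear maps, so the two operators commute on each component, hence everywhere; this gives \eqref{piP}.

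The main obstacle is justifying cleanly that $\ran(s^{k-1,l+1})$ is spanned by canonical basis forms — equivalently, that $s^{k-1,l+1}$ maps basis forms to (signed) basis forms up to zero, which is exactly what formula \eqref{eq:s_iJ} says, but one must be careful: a single basis element $dx^\sigma\otimes dx^\tau$ of $\alt^{k-1,l+1}$ maps to a \emph{sum} of basis forms under \eqref{eq:s_iJ}, so $\ran(s^{k-1,l+1})$ is spanned by such sums rather than obviously by individual basis forms. The correct phrasing is that $\ran(s^{k-1,l+1})$ is a coordinate subspace: one checks that $dx^\sigma\otimes dx^\tau$ lies in $\ran(s^{k-1,l+1})$ iff it appears with nonzero coefficient in the image of some generator, and using that the $s$ operators on the one-dimensional factors are "diagonal" one sees that the range is spanned by those $dx^\sigma\otimes dx^\tau$ with $\sigma\cap\tau$ of prescribed size; in any case $\ran(s^{k-1,l+1})^\perp$ is itself spanned by a subset of the orthonormal basis $\{dx^\sigma\otimes dx^\tau\}$, so $P_{\ran(s^{k-1,l+1})^\perp}$ is diagonal in this basis. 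Once this structural fact is in place, the commutation is immediate because a scalar on each coefficient block trivially commutes with any linear operator on that block. If a more self-contained argument is preferred, one can instead invoke Lemma~\ref{lem:s-tensorpro} together with Lemma~\ref{lem:commut-pi-d2}: since $S^{k,l}$ and $(S^{k-1,l+1})^{*}$ both commute with $\pi^{\bs}_{\otimes n}$ up to index shifts, and $P_{\ran(S^{k-1,l+1})^\perp}=I-P_{\ran(S^{k-1,l+1})}$ with $P_{\ran(S^{k-1,l+1})}$ expressible through $S^{k-1,l+1}$ and its pseudoinverse, the commutation of $\pi^{\bs}_{\otimes n}$ with the projection follows; the diagonal-basis argument above is the cleaner route and the one I would write out.
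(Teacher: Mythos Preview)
Your main argument has a genuine gap: the claim that $P_{\ran(s^{k-1,l+1})^\perp}$ is diagonal in the canonical basis $\{dx^\sigma\otimes dx^\tau\}$ of $\alt^{k,l}$ is false. Take $n=3$, $k=l=1$ (the first nontrivial projection in the elasticity complex). Then $s^{0,2}(1\otimes dx^a\wedge dx^b)=dx^a\otimes dx^b-dx^b\otimes dx^a$, so $\ran(s^{0,2})$ is the space of skew matrices and $\ran(s^{0,2})^\perp$ the symmetric ones; the projection is the symmetrization $\sigma\mapsto\tfrac12(\sigma+\sigma^T)$, which sends $dx^1\otimes dx^2$ to $\tfrac12(dx^1\otimes dx^2+dx^2\otimes dx^1)$ and is therefore \emph{not} diagonal. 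Your attempted justification that $\ran(s^{k-1,l+1})$ is a ``coordinate subspace'' fails for exactly this reason: the image of a basis form under \eqref{eq:s_iJ} is an alternating sum, and these sums do not span a coordinate subspace.

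The repair is to use a coarser block decomposition. By \eqref{pis-spi} in Assumption~\ref{assump:interp} (which in proxies says $\tilde\pi^{1,0}=\tilde\pi^{0,1}$), the one-dimensional coefficient operator $\tilde\pi^{i_t,j_t}$ depends only on $i_t+j_t$; hence on the subspace $W_{\bm a}\subset\alt^{k,l}$ spanned by all $dx^\sigma\otimes dx^\tau$ with $\chi_t(\sigma)+\chi_t(\tau)=a_t$, the operator $\pi^{k,l}_{\otimes n}$ restricts to a single $A_{\bm a}\otimes I_{W_{\bm a}}$. On the other hand, every term in \eqref{eq:s_iJ} preserves the vector $(\chi_t(\sigma)+\chi_t(\tau))_t$, so $\ran(s^{k-1,l+1})$ and its orthogonal complement split along the $W_{\bm a}$, and $P_{\ran(s^{k-1,l+1})^\perp}$ is block-diagonal with respect to this decomposition. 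Now your commutation argument goes through block by block.

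By contrast, the paper's proof is much shorter: it simply writes $\pi^{k,l}_{\otimes n}=\tilde\pi^{k,l}_{\otimes n}\otimes I$ and invokes the tensor identity $(A\otimes I)(I\otimes B)=A\otimes B=(I\otimes B)(A\otimes I)$. That line of reasoning captures the right intuition (the quasi-interpolation acts on coefficients, the projection on forms), and once one uses Assumption~\ref{assump:interp} to see that the coefficient action is constant on each $W_{\bm a}$, it becomes rigorous in exactly the way sketched above. Your alternative route via Lemma~\ref{lem:commut-pi-d2} and pseudoinverses can also be made to work, but it ultimately needs the same block observation to handle $S^*$, so it is not a shortcut.
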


\begin{proof}
The claim follows by observing that $\pi^{k, l}_{\otimes n}$ acts on the coefficient function, whereas it is the identity operator on alternating forms; on the contrary, $I\otimes P_{\ran(s^{k-1, l+1})^{\perp}}$ acts only on alternating forms as shown in the following:
\begin{align*}
    &\pi^{k, l}_{\otimes n} \left(I\otimes P_{\ran(s^{k-1, l+1})^{\perp}}\right)
    = \left(\tilde\pi^{k, l}_{\otimes n}\otimes I\right)
    \left(I\otimes P_{\ran(s^{k-1, l+1})^{\perp}}\right)\\ 
    &\quad = \tilde\pi^{k, l}_{\otimes n} \otimes P_{\ran(s^{k-1, l+1})^{\perp}} 
    =\left(I\otimes P_{\ran(s^{k-1, l+1})^{\perp}}\right)
    \left(\tilde\pi^{k, l}_{\otimes n}\otimes I\right)\\
    &\quad = \left(I\otimes P_{\ran(s^{k-1, l+1})^{\perp}}\right)\pi^{k, l}_{\otimes n}.
\end{align*}
\end{proof}

From \eqref{dpi}, \eqref{pis}, \eqref{piP} and the explicit form of the operators in the BGG complexes~\eqref{Dder}, we obtain the main result.
\begin{theorem}
The $\pi_{\otimes n}$ operators commute with $\mathscr{D}$ defined in \eqref{Dder}, i.e., $$
\pi_{\otimes n}^{k+1, l-1}\mathscr{D}^k=\mathscr{D}^k \pi_{\otimes n}^{k, l}.$$
\end{theorem}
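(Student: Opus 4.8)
The plan is to verify the claimed commutativity $\pi_{\otimes n}^{k+1, l-1}\mathscr{D}^k=\mathscr{D}^k \pi_{\otimes n}^{k, l}$ by examining each of the three cases in the definition \eqref{Dder} of $\mathscr{D}^k$ separately, and in each case reducing to one (or a composition) of Lemmas~\ref{lem:commut-pi-d1}, \ref{lem:commut-pi-d2} and \ref{lem:commut-pi-d3}. There is nothing more to do than to assemble these lemmas in the right order; the subtlety is only bookkeeping of indices, i.e., making sure that when one lemma produces an output in a particular space $\Upsilon_h^{\bs}$ the next lemma is applied with the matching indices.

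First I would treat the easy range $J < k \le n$, where $\mathscr{D}^k = d^k$ restricted to $\ker(S^{k, J})$. Here the claim is just \eqref{dpi} of Lemma~\ref{lem:commut-pi-d1}, together with the observation that $\pi_{\otimes n}$ maps $\ker(S^{k,J})$ into $\ker(S^{k,J})$: indeed, if $S^{k,J}u = 0$ then by Lemma~\ref{lem:commut-pi-d2} $S^{k,J}\pi_{\otimes n}^{k,J} u = \pi_{\otimes n}^{k+1,J-1} S^{k,J} u = 0$, so $\pi_{\otimes n}$ restricts correctly and nothing else is needed. Next I would treat $0 \le k < J$, where $\mathscr{D}^k = P_{\ran(S^k)^\perp} d^k$ acting on $\ran(S^{k-1,J})^\perp$. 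Using Remark~\ref{rem:pi_ext}/the notation $\tilde\pi$, the projection $I \otimes P_{\ran(s^{\bs})^\perp}$ acts only on the algebraic-form factor while $\pi_{\otimes n}$ acts (in the relevant sense) only on the coefficient factor, so they commute by Lemma~\ref{lem:commut-pi-d3}; combining this with \eqref{dpi} gives
\[
\pi_{\otimes n}^{k+1,l-1}\, P_{\ran(S^k)^\perp}\, d^k
= P_{\ran(S^k)^\perp}\, \pi_{\otimes n}^{k+1,l}\, d^k
= P_{\ran(S^k)^\perp}\, d^k\, \pi_{\otimes n}^{k,l},
\]
where I also use that $\pi_{\otimes n}$ preserves $\ran(S^{k-1,J})^\perp$, which follows again from Lemmas~\ref{lem:commut-pi-d2} and \ref{lem:commut-pi-d3}.

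The only case requiring a genuine (short) computation is $k = J$, where $\mathscr{D}^J = d^J \circ (S^{J-1,J})^{-1} \circ d^J$ — a zig-zag. Here I would write
\[
\pi_{\otimes n}\, d\, (S^{J-1,J})^{-1}\, d
= d\, \pi_{\otimes n}\, (S^{J-1,J})^{-1}\, d
= d\, (S^{J-1,J})^{-1}\, \pi_{\otimes n}\, d
= d\, (S^{J-1,J})^{-1}\, d\, \pi_{\otimes n},
\]
using \eqref{dpi} for the first and last steps, and for the middle step the fact that $\pi_{\otimes n}$ commutes with $(S^{J-1,J})^{-1}$: since $S^{J-1,J}$ is bijective between the relevant discrete spaces and, by Lemma~\ref{lem:commut-pi-d2}, $S^{J-1,J}\pi_{\otimes n} = \pi_{\otimes n} S^{J-1,J}$, conjugating gives $\pi_{\otimes n}(S^{J-1,J})^{-1} = (S^{J-1,J})^{-1}\pi_{\otimes n}$ on the image of $d^J$. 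I expect this inversion step to be the main (mild) obstacle, because one must check that the intermediate quantity $(S^{J-1,J})^{-1} d^J u$ really lands in the space on which the next $\pi_{\otimes n}$ and $d$ are defined — equivalently, that $d^J u \in \ran(S^{J-1,J})$, which is exactly the commutativity property $d S = -S \tilde d$ of the BGG diagram guaranteeing the zig-zag is well-defined, valid at the discrete level because $\mathscr{S}_{\vbr}^{\vp}\Lambda^{\bs}$ sit inside $L^2\Lambda^{\bs}$ and the operators restrict. Boundedness of $\pi_{\otimes n}^{\bs}$ is inherited from the $L^2$-boundedness of the $\pi^{i,j}$ in Assumption~\ref{assump:interp} via the tensor product structure, so it needs only a one-line remark. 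Collecting the three cases completes the proof.
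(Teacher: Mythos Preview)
Your proposal is correct and follows exactly the paper's approach: the paper's proof is a single sentence stating that the result follows from \eqref{dpi}, \eqref{pis}, \eqref{piP} and the explicit form of $\mathscr{D}^\bs$ in \eqref{Dder}, and you have simply spelled out the case-by-case verification that this entails. Your treatment of the zig-zag case $k=J$, including the observation that $\pi_{\otimes n}$ commutes with $(S^{J-1,J})^{-1}$ by inverting Lemma~\ref{lem:commut-pi-d2}, is the natural way to unpack that sentence.
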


Define
\begin{gather*}
    \mathbb{E}^{k}=
    \begin{cases}
    \ran(s^{k-1, l+1})^\perp,\quad k\leq l, \\
    \ker(s^{k, l+1}),\quad k\geq l+1, 
    \end{cases}
\end{gather*}
and the spaces
\begin{gather}
    H(\mathscr{D}^k):=\{u\in L^2\otimes \mathbb{E}^k: \mathscr{D}^k u\in L^2\otimes \mathbb{E}^{k+1}\}
\end{gather}
with the graph norm $\|u\|_{H(\mathscr{D}^k)}^2:=\|u\|^2+\|\mathscr{D}^k u\|^2$.

\begin{theorem}
The operators $\pi_{\otimes n}^{k, l}$ are bounded in $L^2$- and $H(\mathscr{D}^k)$-norms, i.e., there exist positive constants $C$ such that
\begin{xalignat}2
  \|\pi_{\otimes n}^{k, l}u\|&\leq C\|u\|,
  &\forall u&\in L^2\otimes \mathbb{E}^k,
  \\
  \|\pi_{\otimes n}^{k, l}u\|_{H(\mathscr{D}^k)}
  &\leq C\|u\|_{H(\mathscr{D}^k)},
  &\forall u&\in H(\mathscr{D}^k).
\end{xalignat}
\end{theorem}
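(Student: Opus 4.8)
The plan is to establish the two bounds separately, but in both cases reduce to the one-dimensional boundedness assumed in Assumption~\ref{assump:interp} via the tensor product structure. First I would prove the $L^2$-estimate. By density it suffices to treat rank-one elements $\omega = \omega_1\otimes\cdots\otimes\omega_n$, for which, by the explicit formula after \eqref{eq:pi_tens}, one has $\pi_{\otimes n}^{k,l}\omega = \pi^{i_1,j_1}\omega_1\otimes\cdots\otimes\pi^{i_n,j_n}\omega_n$ (all cross terms vanish because of the extension by zero in Remark~\ref{rem:pi_ext}). Using that the $L^2$-norm on $L^2\Lambda^{k,l}_{\otimes n}$ is the tensor (Hilbert) norm, $\|\omega_1\otimes\cdots\otimes\omega_n\| = \prod_{t}\|\omega_t\|$, and applying the 1D bound $\|\pi^{i_t,j_t}\omega_t\|\le C_1\|\omega_t\|$ in each factor, we get $\|\pi_{\otimes n}^{k,l}\omega\|\le C_1^n\|\omega\|$. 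The passage from rank-one tensors to general elements, and the fact that a bounded operator on a dense subset of a Hilbert space extends with the same bound, is standard; I would cite the relevant place in \cite{Hackbusch14} or \cite{bonizzoni2021h}. This gives the first inequality with $C = C_1^n$.

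For the graph-norm bound, the point is that we have already shown in the previous theorem that $\pi_{\otimes n}^{k+1,l-1}\mathscr{D}^k = \mathscr{D}^k\pi_{\otimes n}^{k,l}$. Hence for $u\in H(\mathscr{D}^k)$ we compute $\|\pi_{\otimes n}^{k,l}u\|_{H(\mathscr{D}^k)}^2 = \|\pi_{\otimes n}^{k,l}u\|^2 + \|\mathscr{D}^k\pi_{\otimes n}^{k,l}u\|^2 = \|\pi_{\otimes n}^{k,l}u\|^2 + \|\pi_{\otimes n}^{k+1,l-1}\mathscr{D}^k u\|^2$, and now apply the $L^2$-bound (just established) to each of the two terms: the first to $u\in L^2\otimes\mathbb{E}^k$ and the second to $\mathscr{D}^k u\in L^2\otimes\mathbb{E}^{k+1}$. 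This yields $\|\pi_{\otimes n}^{k,l}u\|_{H(\mathscr{D}^k)}^2 \le C^2(\|u\|^2 + \|\mathscr{D}^k u\|^2) = C^2\|u\|_{H(\mathscr{D}^k)}^2$, i.e.\ the second inequality with the same constant $C$.

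One technical point that requires a small amount of care, rather than being a genuine obstacle: when $\mathscr{D}^k$ involves the projection $P_{\ran(S)^\perp}$ or the inverse $(S^{J-1,J})^{-1}$ (the cases $k\le J-1$ and $k=J$ in \eqref{Dder}), one must make sure that $\pi_{\otimes n}^{k,l}$ indeed maps into $\Upsilon^k_h = L^2\otimes\mathbb{E}^k$ rather than merely into $\mathscr{S}_{\vbr}^{\vp}\Lambda^{k,l}$ — this follows from Lemma~\ref{lem:commut-pi-d3} (commutation with the projector) and Lemma~\ref{lem:commut-pi-d2} together with bijectivity of $S^{J-1,J}$ on coefficient spaces, so that all the operators in the composition are uniformly bounded. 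The inverse $(S^{J-1,J})^{-1}$ is bounded because it acts as the identity on the finite-dimensional coefficient spaces (see the Remark after Lemma~\ref{lem:s-tensorpro}), so no stability constant depending on the mesh enters. I do not expect a serious obstacle here; the whole argument is a routine propagation of the 1D bound through the tensor product and through the already-proven commutation relations. The only mild subtlety worth flagging is uniformity of the constant in the number $n$ of factors (it is $C_1^n$), and possibly in the mesh if one later specializes to finite element or spline families — but at this abstract level the constant is simply inherited from the 1D assumption.
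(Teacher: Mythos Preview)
Your overall strategy matches the paper's: bound $\pi_{\otimes n}^{k,l}$ in $L^2$ using the tensor product structure and the 1D bound, then get the $H(\mathscr{D}^k)$-bound from $L^2$-boundedness plus the commutation relation $\mathscr{D}^k\pi_{\otimes n}^{k,l}=\pi_{\otimes n}^{k+1,l-1}\mathscr{D}^k$. The second part is exactly the ``canonical argument'' the paper cites (to \cite[Theorem 8.4]{falk2021bubble}), and your computation is correct.

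There is, however, a genuine slip in your $L^2$ argument. You write that ``by density it suffices to treat rank-one elements'', show $\|\pi_{\otimes n}^{k,l}\omega\|\le C_1^n\|\omega\|$ for rank-one $\omega$, and then pass to general elements. This step fails: a pointwise bound $\|T\omega\|\le C\|\omega\|$ on rank-one tensors does \emph{not} imply $\|T\|\le C$, because a general element is a sum of rank-one tensors and the triangle inequality costs you control of the cross terms. (For a quick counterexample, the trace functional on $\mathbb{R}^2\otimes\mathbb{R}^2$ satisfies $|\tr(v\otimes w)|=|v\cdot w|\le\|v\|\|w\|=\|v\otimes w\|$ on rank-one tensors, yet $\|\tr\|=\sqrt{2}$.) The correct statement, which is what the paper uses (citing \cite[Lemma~7]{bonizzoni2021h}), is the operator-norm identity for Hilbert tensor products:
\[
\|\pi^{i_1,j_1}\otimes\cdots\otimes\pi^{i_n,j_n}\|_{*}
=\|\pi^{i_1,j_1}\|_{*}\cdots\|\pi^{i_n,j_n}\|_{*},
\]
and then, since $\pi_{\otimes n}^{k,l}$ is a sum of such tensor products over $(\bm i,\bm j)\in\charset_k\times\charset_l$, the triangle inequality for operator norms gives
\[
\|\pi_{\otimes n}^{k,l}\|_{*}\le\sum_{\bm i,\bm j}\prod_{t=1}^n\|\pi^{i_t,j_t}\|_{*}.
\]
So the fix is simply to invoke the tensor-product operator-norm identity directly rather than arguing via rank-one elements; your reference to \cite{Hackbusch14} or \cite{bonizzoni2021h} is the right place to find it.
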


\begin{proof}
The $L^2$-boundedness of $\pi_{\otimes n}^{k, l}$ is similar to \cite[Lemma 7]{bonizzoni2021h}. In particular, there holds:
\begin{gather*}
    \|\pi_{\otimes n}^{k, l}\|_*
    \leq \sum_{\substack{\bm i\in \charset_{k}\\
    \bm j\in \charset_{l}}}
    \|\pi^{i_1, j_1}\otimes \cdots \otimes \pi^{i_n, j_n}\|_*
    =\sum_{\substack{\bm i\in \charset_{k}\\
    \bm j\in \charset_{l}}}
    \|\pi^{i_1, j_1}\|_*\cdots \|\pi^{i_n, j_n}\|_*,
\end{gather*}
where $\|\cdot\|_*$ denotes the $L^2$-norm of operators.
Canonical argument, see~\cite[Theorem 8.4]{falk2021bubble}, shows that the $L^2$-boundedness and commutativity implies boundedness in the $H(\mathscr{D}^k)$-norm.  
\end{proof}

\section{Spline BGG complexes}
\label{sec:examples_splines}

In this section, we derive spline BGG complexes that satisfy Assumption \ref{assump:1Dcomplex} and Assumption \ref{assump:interp} for arbitrary space dimensions and present the elasticity, Hessian and $\div\div$ complexes as examples. The outline of the section is as follows: In Section 3.1, we state some basic concepts from the spline theory and define a spline BGG complex in 1D along with quasi-interpolation operators for the 1D spline spaces. In Section 3.2, we discretize the diagram (\ref{diagram-3rows2D}) and the 2D version of the diagram \eqref{diagram-4rows} using spline spaces and present the derivation of the 2D stress  complex as a BGG complex. In Section 3.3, we discretize the diagram (\ref{diagram-4rows}) for vector proxies in higher dimensions using spline spaces and present the derivation of the elasticity, Hessian, and $\div\div$ complexes as BGG complexes. 

\subsection{Splines in 1D} \label{sec:splines}

Splines are piecewise polynomial functions that satisfy certain regularity conditions. By convention, their parametric domain is defined as the unit interval $\mathcal{I}=[0,1]$ in 1D. Knot vectors are used to partition $\mathcal{I}$ and define the spline basis functions. A knot vector is a vector $\Sigma$ given by  $\Sigma=[\eta_1,\dots, \eta_{n+p+1}]$ where its components, a.k.a., the knot values (or the knots), satisfy $0\le \eta_{1}\le \eta_2 \le \dots \le  \eta_{n+p+1}\le 1$,  where $p$ denotes the polynomial degree of the spline and $n$ denotes the number of basis functions needed to construct the space $S^p_{\br}(\Sigma)$ defined below. The regularity of a spline defined via $\Sigma$ is given by a vector $\br$ that consists of the regularity values of the spline at the knot values in $\Sigma$. 

Suppose $\Sigma$ includes $N$ distinct knot values and let $\hat{\Sigma}\subseteq \Sigma$ be the set of these distinct knot values. For example, if $\Sigma=\{0.0,0.0,0.2,0.4,1.0,1.0\}$, then $\hat{\Sigma}:=\{0.0,0.2,0.4,1.0\}$. The regularity of a spline at a knot value $\hat{\eta}_i\in \hat{\Sigma}$ is computed by $r_i:=p-m_i$ where $1\le m_i\le p+1$ denotes the number of times the knot value $\hat{\eta}_i$ is repeated in $\Sigma$. 
By utilizing the definition in \cite{schumaker}, we may define the one-dimensional spline space $S^p_{\br}(\Sigma)$ as follows:
\begin{align*}\label{eq:splinesp}
S^p_{\br}&(\Sigma):=\{ \phi:  \exists \phi_i\in \mathcal{P}_p: \phi(x)= \phi_i(x)\ \text{for}\ x\in I_i:=[\hat{\eta}_{i-1},\hat{\eta}_i),\ \hat{\eta}_i\in \hat{\Sigma}, I_i\subset \mathcal{I},\\  &i=1,\dots (N-1),  D^{r_i}\phi_{i-1}(\hat{\eta}_i)= D^{r_i}\phi_{i} (\hat{\eta}_i),\ r_i=0,1,\dots,p-m_i\},
\end{align*}
where $\mathcal{P}_p$ denotes the polynomial space of degree $p$. 

\begin{remark}\label{remarkps}
Note that if $m_i=m$ for some $m\ge 1$ and $\forall \hat{\eta}_i \in \Sigma$, then $S^p_{\br}(\Sigma)$ becomes a regular polynomial space of degree $p$ defined piecewise over the knot intervals in $I$.
\end{remark}

In this paper, we consider splines given by open knot vectors, that is, the case where   $\eta_1=\dots=\eta_{p+1}$ and $\eta_{n+1}=\dots=\eta_{n+p+1}$, and use B-splines (a.k.a. basis splines) since every spline function can be written as a linear combination of B-splines of the same degree \cite{bsplines}. Thus, we refer to B-splines by the term \textit{spline} here.\\
B-spline basis functions of degree $p$ denoted by $\{B_i^p\}$ are defined via the Cox-de Boor formula~\cite{deboor}, which starts with defining the lowest degree basis functions $\{B_i^0(\eta)\}$ and obtains the higher degree B-spline basis functions $\{B_i^p(\eta)\}$ by recursion as follows:
$$
B_i^0(\eta):=
    \left\{\begin{array}{ll}
      1, & \eta_i\le \eta < \eta_{i+1}, \\
      0, & \textrm{otherwise},
    \end{array}\right.
$$
$$
B_i^p(\eta)=\frac{\eta- \eta_i}{\eta_{i+p}-\eta_i} B_i^{p-1}(\eta)+\frac{\eta_{i+p+1}-\eta}{\eta_{i+p+1}-\eta_{i+1}} B_{i+1}^{p-1}(\eta).
$$
Using these basis functions, we may also define $S^p_{\br}(\Sigma)$ as follows:
\begin{align}
S^p_{\br}(\Sigma):=\spn\{ B_i^p(\eta): i=1,\dots,n\}.
\end{align}
Suppose that $r_i\ge 0$ at the internal knots, that is, the B-spline functions are at least continuous at the knots, then the derivative of a B-spline basis function is given by
$$\frac{d}{d\eta}B_{i}^p(\eta)=\frac{p}{\eta_{i+p}-\eta_i} B_{i}^{p-1}(\eta)-\frac{p}{\eta_{i+p+1}-\eta_{i+1}}B_{i+1}^{p-1}(\eta),$$
where $B_{1}^{p-1}(\eta)=B_{n+1}^{p-1}(\eta)=0$, by assumption \cite{veig2014var}. We note that $d:S_{\br}^p(\Sigma)\to S^{p-1}_{\br-1}(\tilde{\Sigma})$ is surjective where $\tilde{\Sigma}=\{\eta_2,\eta_3,\dots, \eta_{n+p}\}$ is an open knot vector obtained by dropping the first and the last knots from the open knot vector $\Sigma$ \cite{veig2014var}.

In the rest of the text, to maintain a compatible notation with the finite element spaces used in the BGG construction in Section \ref{sec:finite-elements}, we use $S^p_{\br}(\mathcal{I})$ to denote a B-spline space of degree $p$ with regularity $\br$ defined over $\mathcal{I}$, excluding the knot vector $\Sigma$ from the notation. Thus, we 
denote the space of $i$-forms with coefficients in $S^p_{\br}(\mathcal{I})$ by $S^p_{\br}\Lambda^i(\mathcal{I})$ as in Section ~\ref{sec:construction}.

By the definition of $S^p_{\br}\Lambda^i(\mathcal{I})$, it follows that the following sequence is a complex and $d$ is onto for any integer-valued vector $\br$ defined as above and any scalar $p\ge 1$. Thus, Assumption~\ref{assump:1Dcomplex} is satisfied.
\begin{equation*}
\begin{tikzcd}
0 \arrow{r}&\mathcal{S}_{\br}^{p}\Lambda^0(\mathcal{I}) \arrow{r}{d}& \mathcal{S}_{\br-1}^{p-1}\Lambda^1(\mathcal{I})\arrow{r}{}&0.
\end{tikzcd}
\end{equation*}

Now, we need to verify Assumption \ref{assump:interp}. We first define the following quasi-interpolation operator as the one defined in \cite{veig2014var}:
 \begin{align}\label{Sinterp0}
     \tilde{\pi}_{0}^p&: L^2(\mathcal{I}) \to S^p_{\br}(\mathcal{I}),&\tilde{\pi}_{0}^p(u):=\sum\limits_{i=1}^{n} \lambda_i^p(u)B_i^p,
 \end{align}
 where each $\lambda_i^p$ is the dual basis functional to 
 the B-spline basis function $B_i^p$(See Theorem 4.41 in \cite{schumaker}). Thus, we have $\lambda_i^p(B_j^p)=\delta_{ij}$ for $i,j=1,2,\cdots,n$. As pointed out in \cite{veig2014var}, this dual basis is used for it enables the satisfaction of the $L^2$-stability of the interpolation operators.
Then, we may uniquely define $\tilde{\pi}_1^{p-1}:L^2(\mathcal{I}) \to S^{p-1}_{\br-1}(\mathcal{I}) $ using (\ref{Sinterp0})
 \begin{align*}
     \tilde{\pi}_{1}^{p-1}v:=\frac{d}{d_x} \tilde{\pi}_0^p \int\limits_{0}^x v(s)\ ds. 
 \end{align*}
Similarly, we define $\tilde{\pi}_2^{p-2}:L^2(\mathcal{I}) \to S^{p-2}_{\br-2}(\mathcal{I}) $ 
  \begin{align*}
     \tilde{\pi}_{2}^{p-2}v:=\frac{d}{d_x} \tilde{\pi}_1^{p-1} \int\limits_{0}^x v(s)\ ds. 
 \end{align*}
 Note that $\tilde{\pi}_{0}^p$ and $\tilde{\pi}_{1}^{p-1}$ are spline preserving, therefore, projections. $\tilde{\pi}_{0}^p$ is $L^2$-stable, and $\tilde{\pi}_{1}^{p-1}$ is $L^2$-stable when the mesh is quasi-uniform \cite{veig2014var}. Similarly, $\tilde{\pi}_{2}^{p-2}$ is spline preserving and $L^2$-stable when the mesh is quasi-uniform. Moreover, these projection operators commute with the differential operators, that is for $i=0,1$, we have
\begin{equation}\label{S:dpicommute}
    \tilde{\pi}_{i+1}^{p-(i+1)} \partial_x v= \partial_x  \tilde{\pi}_{i}^{p-i} v.
\end{equation} 
Then, we use these projection operators to define the quasi-interpolation operators $\pi^{i,j}: L^{2}\Lambda^{i, j}(\mathcal{I})\to \mathcal{S}_{\br-i-j}^{p-i-j}\Lambda^{i, j}(\mathcal{I})$ as in Section~\ref{sec:quasi_interp}.
For example, $\pi^{0,0}: L^2\Lambda^{0, 0}(\mathcal{I})\to \mathcal{S}_{\br}^{p}\Lambda^{0, 0}(\mathcal{I})$ is given by $\pi^{0,0} w=(\tilde{\pi}^{0,0}u) (1\otimes 1)=(\tilde{\pi}_{0}^p u)(1\otimes 1)$ where $w=u(1\otimes 1)\in L^2\Lambda^{0, 0}$. 
Then, let $w=u ( dx^{\sigma_1} \otimes 1)\in L^2\Lambda^{1,0}(\mathcal{I})$ and define $\pi^{1,0}: L^2\Lambda^{1,0}(\mathcal{I})\to S_{\br-1}^{p-1}\Lambda^{1,0}(\mathcal{I})$, as follows:
 \begin{align*}
     \pi^{1,0} w&=\pi^{1,0}u (dx^{\sigma_1}\otimes 1)=(\tilde{\pi}^{1,0} u) (dx^{\sigma_1}\otimes 1)=(\tilde{\pi}_1^{p-1} u) (dx^{\sigma_1}\otimes 1).
 \end{align*}
 We define $\pi^{0,1}=  \pi^{1,0}$ taking advantage of the equivalence of the relevant spline spaces. Similarly, we define $\pi^{1,1}:L^2\Lambda^{1,1} (\mathcal{I})\to S_{\br-2}^{p-2}\Lambda^{1,1}(\mathcal{I})$ as follows: 
 \begin{align*}
     \pi^{1,1} w=\pi^{1,1}u (dx^{\sigma_1}\otimes dx^{\mu_1})&=(\tilde{\pi}^{1,1}u )(dx^{\sigma_1}\otimes dx^{\mu_1})\\&=(\tilde{\pi}_{2}^{p-2} u) (dx^{\sigma_1}\otimes dx^{\mu_1}),
 \end{align*}
 where $w=u(dx^{\sigma_1}\otimes dx^{\mu_1})\in L^2\Lambda^{1,1}(\mathcal{I})$.
 We see that (\ref{1D:commutdpi}) holds due to (\ref{S:dpicommute}).
 Moreover, (\ref{pis-spi}) holds due to the definition of the spaces and interpolation operators, that is, $S^{0,1}\pi^{0,1}=\pi^{1,0}S^{0,1}$.
Thus, the assumptions in Section \ref{sec:review-bgg} are satisfied, and
we can write the 1D BGG diagram as follows:
 \begin{equation}\label{1D:BGG-diagram-2}
\begin{tikzcd}
0 \arrow{r} &\mathcal{S}_{\br}^{p} (\mathcal{I}) \arrow{r}{d} &\mathcal{S}_{\br-1}^{p-1} (\mathcal{I})\arrow{r}{} & 0\\
0 \arrow{r} &\mathcal{S}_{\br-1}^{p-1}(\mathcal{I})\arrow{r}{d} \arrow[ur, "I"]&\mathcal{S}_{\br-2}^{p-2}(\mathcal{I}) \arrow{r}&0.
 \end{tikzcd}
\end{equation}
From (\ref{1D:BGG-diagram-2}), we derive the BGG complex
\begin{equation*}
\begin{tikzcd}
0 \arrow{r}&\mathcal{S}_{\br}^{p}(\mathcal{I}) \arrow{r}{d\circ I \circ d}&\mathcal{S}_{\br-2}^{p-2}(\mathcal{I})\arrow{r}&0.
\end{tikzcd}
\end{equation*} 

\subsection{Splines in higher dimensions}

In $\mathbb{R}^n$, we define the spline spaces over the tensor-product of parametric domain as $\mathcal{I}=[0,1]^n$. The partition of $\mathcal{I}$ is determined by $n$ knot vectors such that an element (with non-zero measure) on the parametric domain is $I_{\bm i}=[\eta_{i_1}^1,\eta_{i_1+1}^1]\otimes \dots \otimes [\eta_{i_n}^n,\eta_{i_n+1}^n]$ where $\eta_{i_j}\neq \eta_{i_j+1}$.
Let $\br_i$ denote the fiber of $\vbr$ in direction $i$, thus being the regularity vector in this coordinate direction.
Then, a spline space in $n$-dimensions is defined via the tensor product of one-dimensional spline spaces, that is, 
$\mathcal S^{\vp}_{\vbr}=\mathcal{S}_{\br_1}^{p_1}\otimes \mathcal{S}_{\br_2}^{p_2}\otimes \dots \otimes \mathcal{S}_{\br_n}^{p_n} $ where $S^{p_i}_{\br_i}$ denotes the spline defined over the $i^{th}$ coordinate direction.

For simplicity, we focus on the case $n=2$. We discretize the 2D version of the second and third rows in the diagram (\ref{diagram-4rows}) via two-dimensional spline spaces as follows:
\begin{equation}\label{diagram-2Dstress}  \adjustbox{scale=0.85,center}{%
\begin{tikzcd}
0 \arrow{r} &\mathcal{S}_{\br_{1}, \br_{2}}^{p_{1}, p_{2}} \arrow{r}{\curl} &\left (
\begin{array}{c}
\mathcal S_{ \br_{1}, \br_{2}-1}^{p_{1}, p_{2}-1}\\
\mathcal S_{\br_{1}-1, \br_{2}}^{p_{1}-1, p_{2}}
\end{array}
\right)\arrow{r}{\div} &\mathcal S_{\br_{1}-1, \br_{2}-1}^{p_{1}-1, p_{2}-1} \arrow{r}{} & 0\\
0 \arrow{r}&
\left (
\begin{array}{c}
\mathcal S_{\br_{1}, \br_{2}-1}^{p_{1}, p_{2}-1}\\
\mathcal S_{\br_{1}-1, \br_{2}}^{p_{1}-1, p_{2}}
\end{array}
\right)
\arrow{r}{\curl} \arrow[ur, "I"]&
\arraycolsep1pt\begin{pmatrix}
\mathcal S_{ \br_{1}, \br_{2}-2}^{p_{1}, p_{2}-2} &\mathcal S_{\br_{1}-1, \br_{2}-1}^{p_{1}-1, p_{2}-1} \\
\mathcal S_{ \br_{1}-1, \br_{2}-1}^{p_{1}-1, p_{2}-1} &\mathcal S_{\br_{1}-2, \br_{2}}^{p_{1}-2, p_{2}}
\end{pmatrix}
 \arrow{r}{\div} \arrow[ur, "2\vskw"]&\left (
\begin{array}{c}
\mathcal S_{\br_{1}-1, \br_{2}-2}^{p_{1}-1, p_{2}-2}\\
\mathcal S_{\br_{1}-2, \br_{2}-1}^{p_{1}-2, p_{2}-1}
\end{array}
\right)  \arrow{r}{} & 0.
 \end{tikzcd}}
\end{equation}

Note that the spaces in the first row of \eqref{diagram-2Dstress} are chosen so as to yield a vector-valued de Rham complex, and we start the second row using the connecting map $S=I$ and complete it in a similar fashion. Since this diagram fulfills the necessary conditions for the BGG construction, we can derive the 2D (stress) elasticity complex as a BGG complex:
 \begin{equation}\label{complex-2Dstress}
\begin{tikzcd}
0 \arrow{r} &\mathcal S_{ \br_{1}, \br_{2}}^{p_{1}, p_{2}} \arrow{r}{\curl\curl} &
\Sigma_{h}
 \arrow{r}{\div}  &\left (
\begin{array}{c}
\mathcal S_{ \br_{1}-1, \br_{2}-2}^{p_{1}-1, p_{2}-2}\\
\mathcal S_{ \br_{1}-2, \br_{2}-1}^{p_{1}-2, p_{2}-1}
\end{array}
\right)  \arrow{r}{} & 0,
 \end{tikzcd}
\end{equation}
where 
$$
\Sigma_{h}:=\left\{\sigma_{h}=
\begin{pmatrix}
\sigma_{11} & \sigma_{12}\\
\sigma_{21} & \sigma_{22}
\end{pmatrix}
\in \begin{pmatrix}
\mathcal S_{ \br_{1}, \br_{2}-2}^{p_{1}, p_{2}-2} &\mathcal S_{ \br_{1}-1, \br_{2}-1}^{p_{1}-1, p_{2}-1} \\
\mathcal S_{ \br_{1}-1, \br_{2}-1}^{p_{1}-1, p_{2}-1} &\mathcal S_{ \br_{1}-2, \br_{2}}^{p_{1}-2, p_{2}}
\end{pmatrix} : \sigma_{12}=\sigma_{21}\right\},
$$
and $\curl\curl v=\Big[\begin{array}{cc}
\frac{\partial^2 v}{\partial x_2^2} & -\frac{\partial^2 v}{\partial x_1 \partial x_2}\\
-\frac{\partial^2 v}{\partial x_1 \partial x_2}&\frac{\partial^2 v}{\partial x_1^2}
\end{array}\Big]$ for $v\in \mathcal S_{ \br_{1}, \br_{2}}^{p_{1}, p_{2}}$.

In the light of Remark~\ref{isomorphic-cx}, we also consider the following discretization of the first two rows of the diagram (\ref{diagram-3rows2D}) via two-dimensional spline spaces.
\begin{equation}\label{diagram-3rows-2D}
      \adjustbox{scale=0.85,center}{%
\begin{tikzcd}[ampersand replacement=\&, column sep=0.3in]
0 \arrow{r} \&\mathcal{S}_{\br_1, \br_2}^{p_1, p_2} \arrow{r}{\grad} \&
\left (
\begin{array}{c}
\mathcal{S}_{\br_1-1,\br_2}^{p_1-1,p_2}\\
\mathcal{S}_{\br_1,\br_2-1}^{p_1,p_2-1}
\end{array}
\right )
 \arrow{r}{\rot} \& \mathcal{S}_{\br_1-1,\br_2-1}^{p_1-1,p_2-1}  \arrow{r}{}\& 0\\
0 \arrow{r}\&
\left (
\begin{array}{c}
\mathcal{S}_{\br_1-1,\br_2}^{p_1-1,p_2}\\
\mathcal{S}_{\br_1,\br_2-1}^{p_1,p_2-1}
\end{array}
\right )
\arrow{r}{\grad} \arrow[ur, "\mathrm{I}"]\&
\left (
\begin{array}{cc}
\mathcal{S}_{\br_1-2,\br_2}^{p_1-2,p_2} &\mathcal{S}_{\br_1-1,\br_2-1}^{p_1-1,p_2-1} \\
\mathcal{S}_{\br_1-1,\br_2-1}^{p_1-1,p_2-1}&\mathcal{S}_{\br_1,\br_2-2}^{p_1,p_2-2} 
\end{array}
\right )  \arrow{r}{\rot} \arrow[ur, "-2\sskw"]\&\left (
\begin{array}{c}
\mathcal{S}_{\br_1-2,\br_2-1}^{p_1-2,p_2-1}\\
\mathcal{S}_{\br_1-1,\br_2-2}^{p_1-1,p_2-2}
\end{array}
\right )   \arrow{r}{} \& 0.
 \end{tikzcd}}
\end{equation}

We note that the diagram (\ref{diagram-3rows-2D}) yields the 2D rotated (stress) elasticity complex:
\begin{equation}\label{complex-2Dstress-rotated}
\begin{tikzcd}[column sep=0.4in]
0 \arrow{r} &\mathcal S_{\br_1, \br_2}^{p_1, p_2} \arrow{r}{\hess} &
\Sigma_h
 \arrow{r}{\rot}  &\left (
\begin{array}{c}
\mathcal S_{\br_1-2, \br_2-1}^{p_1-2, p_2-1}\\
\mathcal S_{\br_1-1, \br_2-2}^{p_1-1, p_2-2}
\end{array}
\right)  \arrow{r}{} & 0,
 \end{tikzcd}
\end{equation}
where $\hat{\Sigma}_h$ is defined by
$$
\hat{\Sigma}_h:=\left\{\sigma_{h}=
\begin{pmatrix}
\sigma_{11} & \sigma_{12}\\
\sigma_{21} & \sigma_{22}
\end{pmatrix}
\in \begin{pmatrix}
\mathcal S_{\br_1-2, \br_2}^{p_1-2, p_2} &\mathcal S_{\br_1-1, \br_2-1}^{p_1-1, p_2-1} \\
\mathcal S_{\br_1-1, \br_2-1}^{p_1-1, p_2-1} &\mathcal S_{\br_1, \br_2-2}^{p_1, p_2-2}
\end{pmatrix} : \sigma_{12}=\sigma_{21}\right\}.
$$

\subsection{Vector proxies for splines in three dimensions}

In this subsection, we discretize the diagram~\eqref{diagram-4rows} via spline spaces in three-dimensions as follows:
 \begin{equation}\label{diagram-4rows-V}
\begin{tikzcd}
0 \arrow{r} &V^{0, 0} \arrow{r}{\grad} &V^{1, 0} \arrow{r}{\curl} & V^{2, 0} \arrow{r}{\div} & V^{3, 0} \arrow{r}{}& 0\\
0 \arrow{r}&
V^{0, 1}
\arrow{r}{\grad} \arrow[ur, "\mathrm{I}"]&
V^{1, 1}  \arrow{r}{\curl} \arrow[ur, "2\vskw"]&V^{2, 1}  \arrow{r}{\div} \arrow[ur, "\tr"]&V^{3, 1}  \arrow{r}{} & 0\\
0 \arrow{r}&
V^{0, 2}
\arrow{r}{\grad} \arrow[ur, "-\mathrm{mskw}"]&
V^{1, 2}  \arrow{r}{\curl} \arrow[ur, "\mathcal{T}"]&V^{2, 2}  \arrow{r}{\div} \arrow[ur, "2\vskw"]&V^{3, 2}  \arrow{r}{} & 0\\
0 \arrow{r} &V^{0, 3}\arrow{r}{\grad} \arrow[ur, "\iota"]&V^{1, 3} \arrow{r}{\curl} \arrow[ur, "-\mskw"]&V^{2, 3} \arrow{r}{\div}\arrow[ur, "I"] & V^{3, 3} \arrow{r}{} & 0,
 \end{tikzcd}
\end{equation}
where $\iota: \mathbb{R}\to \mathbb{M}$ is defined by $\iota u:= uI$, and $\mathcal{T}:\mathbb{M}\to \mathbb{M}$ is defined as $\mathcal{T}u:= u^t-tr(u)I$ \cite{arnold2021complexes}. 
We start with defining $V^{0, 0}:=\mathcal S^{p_{1}, p_{2}, p_{3}}_{  \br_{1},  \br_{2},  \br_{3}}$. The rest of the first row is defined via following the de Rham sequence as follows: 
\begin{gather}
  \label{diagram-V10}
V^{1, 0}=
\begin{pmatrix}
\mathcal S^{p_{1}-1, p_{2}, p_{3}}_{  \br_{1}-1,  \br_{2},  \br_{3}}\\
\mathcal S^{p_{1}, p_{2}-1, p_{3}}_{  \br_{1},  \br_{2}-1,  \br_{3}}\\
\mathcal S^{p_{1}, p_{2}, p_{3}-1}_{  \br_{1},  \br_{2},  \br_{3}-1}
\end{pmatrix},
\quad
V^{2, 0}=
\begin{pmatrix}
\mathcal S^{p_{1}, p_{2}-1, p_{3}-1}_{  \br_{1},  \br_{2}-1,  \br_{3}-1}\\
\mathcal S^{p_{1}-1, p_{2}, p_{3}-1}_{  \br_{1}-1,  \br_{2},  \br_{3}-1}\\
\mathcal S^{p_{1}-1, p_{2}-1, p_{3}}_{  \br_{1}-1,  \br_{2}-1,  \br_{3}}
\end{pmatrix},
\quad
V^{3, 0}=\mathcal S^{p_{1}-1, p_{2}-1, p_{3}-1}_{  \br_{1}-1,  \br_{2}-1,  \br_{3}-1}.
\end{gather}
In the second row, we employ the symmetry of the diagram to obtain $V^{0, 1}= V^{1, 0}$. Then, via differential operations, we derive
\begin{gather}
V^{1, 1}=
  \label{diagram-V11}
\begin{pmatrix}
\mathcal S^{p_{1}-2, p_{2}, p_{3}}_{  \br_{1}-2,  \br_{2},  \br_{3}} &\mathcal S^{p_{1}-1, p_{2}-1, p_{3}}_{  \br_{1}-1,  \br_{2}-1,  \br_{3}}&\mathcal S^{p_{1}-1, p_{2}, p_{3}-1}_{  \br_{1}-1,  \br_{2},  \br_{3}-1} \\
\mathcal S^{p_{1}-1,p_{2}-1,p_{3}}_{  \br_{1}-1, \br_{2}-1, \br_{3}}&\mathcal S^{p_{1}, p_{2}-2, p_{3}}_{  \br_{1},  \br_{2}-2,  \br_{3}}&
\mathcal S^{p_{1}, p_{2}-1, p_{3}-1}_{  \br_{1},  \br_{2}-1,  \br_{3}-1}\\
\mathcal S^{p_{1}-1, p_{2}, p_{3}-1}_{  \br_{1}-1,  \br_{2},  \br_{3}-1} & \mathcal S^{p_{1}, p_{2}-1, p_{3}-1}_{  \br_{1},  \br_{2}-1,  \br_{3}-1}& \mathcal S^{p_{1}, p_{2}, p_{3}-2}_{  \br_{1},  \br_{2},  \br_{3}-2}
\end{pmatrix},
\end{gather}

\begin{gather}
\label{diagram-V21}
V^{2, 1}=
\begin{pmatrix}
\mathcal S^{p_{1}-1, p_{2}-1, p_{3}-1}_{  \br_{1}-1,  \br_{2}-1,  \br_{3}-1} &\mathcal S^{p_{1}-2, p_{2}, p_{3}-1}_{  \br_{1}-2,  \br_{2},  \br_{3}-1}&\mathcal S^{p_{1}-2, p_{2}-1, p_{3}}_{  \br_{1}-2,  \br_{2}-1,  \br_{3}} \\
\mathcal S^{p_{1},p_{2}-2,p_{3}-1}_{  \br_{1}, \br_{2}-2, \br_{3}-1}&\mathcal S^{p_{1}-1, p_{2}-1, p_{3}-1}_{  \br_{1}-1,  \br_{2}-1,  \br_{3}-1}&
\mathcal S^{p_{1}-1, p_{2}-2, p_{3}}_{  \br_{1}-1,  \br_{2}-2,  \br_{3}}\\
\mathcal S^{p_{1}, p_{2}-1, p_{3}-2}_{  \br_{1},  \br_{2}-1,  \br_{3}-2} & \mathcal S^{p_{1}-1, p_{2}, p_{3}-2}_{  \br_{1}-1,  \br_{2},  \br_{3}-2}& \mathcal S^{p_{1}-1, p_{2}-1, p_{3}-1}_{  \br_{1}-1,  \br_{2}-1,  \br_{3}-1}
\end{pmatrix},
\end{gather}
$$
V^{3, 1}=
\begin{pmatrix}
\mathcal S^{p_{1}-2, p_{2}-1, p_{3}-1}_{  \br_{1}-2,  \br_{2}-1,  \br_{3}-1}\\
\mathcal S^{p_{1}-1, p_{2}-2, p_{3}-1}_{  \br_{1}-1,  \br_{2}-2,  \br_{3}-1}\\
\mathcal S^{p_{1}-1, p_{2}-1, p_{3}-2}_{  \br_{1}-1,  \br_{2}-1,  \br_{3}-2}
\end{pmatrix}.
$$
In the third row, again by the symmetry of the diagram we have $V^{0, 2}=V^{2, 0}$, and by transposing $V^{2,1}$ we obtain
$$
V^{1, 2}=
\begin{pmatrix}
\mathcal S^{p_{1}-1, p_{2}-1, p_{3}-1}_{  \br_{1}-1,  \br_{2}-1,  \br_{3}-1} &\mathcal S^{p_{1}, p_{2}-2, p_{3}-1}_{  \br_{1},  \br_{2}-2,  \br_{3}-1} &\mathcal S^{p_{1}, p_{2}-1, p_{3}-2}_{  \br_{1},  \br_{2}-1,  \br_{3}-2} \\
\mathcal S^{p_{1}-2,p_{2},p_{3}-1}_{  \br_{1}-2, \br_{2}, \br_{3}-1}&\mathcal S^{p_{1}-1, p_{2}-1, p_{3}-1}_{  \br_{1}-1,  \br_{2}-1,  \br_{3}-1}&\mathcal S^{p_{1}-1, p_{2}, p_{3}-2}_{  \br_{1}-1,  \br_{2},  \br_{3}-2}\\
\mathcal S^{p_{1}-2, p_{2}-1, p_{3}}_{  \br_{1}-2,  \br_{2}-1,  \br_{3}} & \mathcal S^{p_{1}-1, p_{2}-2, p_{3}}_{  \br_{1}-1,  \br_{2}-2,  \br_{3}}& \mathcal S^{p_{1}-1, p_{2}-1, p_{3}-1}_{  \br_{1}-1,  \br_{2}-1,  \br_{3}-1}
\end{pmatrix}.
$$
The remaining spaces are defined by similar arguments as follows:
$$V^{2, 2}=
\begin{pmatrix}
\mathcal S^{p_{1}, p_{2}-2, p_{3}-2}_{  \br_{1},  \br_{2}-2,  \br_{3}-2} &\mathcal S^{p_{1}-1, p_{2}-1, p_{3}-2}_{  \br_{1}-1,  \br_{2}-1,  \br_{3}-2}&\mathcal S^{p_{1}-1, p_{2}-2, p_{3}-1}_{  \br_{1}-1,  \br_{2}-2,  \br_{3}-1} \\
\mathcal S^{p_{1}-1,p_{2}-1,p_{3}-2}_{  \br_{1}-1, \br_{2}-1, \br_{3}-2}&\mathcal S^{p_{1}-2, p_{2}, p_{3}-2}_{  \br_{1}-2,  \br_{2},  \br_{3}-2}&
\mathcal S^{p_{1}-2, p_{2}-1, p_{3}-1}_{  \br_{1}-2,  \br_{2}-1,  \br_{3}-1}\\
\mathcal S^{p_{1}-1, p_{2}-2, p_{3}-1}_{  \br_{1}-1,  \br_{2}-2,  \br_{3}-1} & \mathcal S^{p_{1}-2, p_{2}-1, p_{3}-1}_{  \br_{1}-2,  \br_{2}-1,  \br_{3}-1}& \mathcal S^{p_{1}-2, p_{2}-2, p_{3}}_{  \br_{1}-2,  \br_{2}-2,  \br_{3}}
\end{pmatrix},
$$
$$
V^{3, 2}=
\begin{pmatrix}
\mathcal S^{p_{1}-1, p_{2}-2, p_{3}-2}_{  \br_{1}-1,  \br_{2}-2,  \br_{3}-2}\\
\mathcal S^{p_{1}-2, p_{2}-1, p_{3}-2}_{  \br_{1}-2,  \br_{2}-1,  \br_{3}-2}\\
\mathcal S^{p_{1}-2, p_{2}-2, p_{3}-1}_{  \br_{1}-2,  \br_{2}-2, \br_{3}-1}
\end{pmatrix}.
$$
Finally, in the last row, we obtain the first three spaces by symmetry and derive the last one via the $\div$ operator:
$$ 
V^{3, 3}=\mathcal S^{p_{1}-2, p_{2}-2, p_{3}-2}_{  \br_{1}-2,  \br_{2}-2,  \br_{3}-2}.$$

Now, we can define the elasticity complex~\eqref{sequence:hs} in three dimensions via the second and third rows of the diagram~\eqref{diagram-4rows-V}
\begin{equation}
  \label{diagram-3Delasticity}
  \begin{tikzcd}
  0 \arrow{r} &V^{0, 1} \arrow{r}{\grad} &V^{1, 1} \arrow{r}{\curl} & V^{2, 1} \arrow{r}{\div} & V^{3, 1} \arrow{r}{}& 0\\
  0 \arrow{r}&
  V^{0, 2}
  \arrow{r}{\grad} \arrow[ur, "-\mathrm{mskw}"]&
  V^{1, 2}  \arrow{r}{\curl} \arrow[ur, "\mathcal{T}"]&V^{2, 2}  \arrow{r}{\div} \arrow[ur, "2\vskw"]&V^{3, 2}  \arrow{r}{} & 0.
 \end{tikzcd}
\end{equation}
It is easy to check the relations such as  $\mskw V^{0, 2}\subset V^{1, 1}$, $\mathcal{T} V^{1, 2}= V^{2, 1}$ (by definition), and $\vskw (V^{2, 2})\subset V^{3, 1}$. Therefore the diagram \eqref{diagram-3Delasticity} satisfies all the assumptions for the BGG construction \cite{arnold2021complexes}. We can read out the 3D elasticity complex from the second and third rows of \eqref{diagram-4rows-V} as follows:
 \begin{equation} 
\begin{tikzcd}
0 \arrow{r} &V^{0, 1} \arrow{r}{\deff} &V^{1, 1}\cap \mathbb{S} \arrow{r}{\inc} & V^{2, 2}\cap \mathbb{S} \arrow{r}{\div} & V^{3, 2} \arrow{r}{}& 0,
 \end{tikzcd}
\end{equation}
Similarly, from the first and second rows of \eqref{diagram-4rows-V}, we obtain the Hessian complex:
 \begin{equation} 
\begin{tikzcd}
0 \arrow{r} &V^{0, 0} \arrow{r}{\hess } &V^{1, 1}\cap \mathbb{S} \arrow{r}{\curl} & V^{2, 1}\cap \mathbb{T} \arrow{r}{\div} & V^{3, 1} \arrow{r}{}& 0,
 \end{tikzcd}
\end{equation}
and from the third and fourth rows of \eqref{diagram-4rows-V}, we obtain the $\div\div$ complex:
 \begin{equation}  
\begin{tikzcd}
0 \arrow{r} &V^{0, 2} \arrow{r}{\dev\grad } &V^{1, 2}\cap \mathbb{T} \arrow{r}{\sym\curl} & V^{2, 2}\cap \mathbb{S} \arrow{r}{\div\div} & V^{3, 3} \arrow{r}{}& 0.
 \end{tikzcd}
\end{equation}

\section{Finite element BGG complexes}
\label{sec:examples_fe}

Within this Section, we first present finite element (FE) de Rham complexes in 1D fulfilling Assumption~\ref{assump:1Dcomplex}. Later on, FE BGG complexes are introduced and the obtained results are compared to existing FEs in the literature.

\subsection{Finite elements in 1D}\label{sec:finite-elements}

When we turn from splines to FEs, the view changes from global to local. Again, we consider a subdivision of the interval of interest into subintervals.
We define polynomial spaces on each subinterval by affine mapping of a polynomial space on the reference interval $[0,1]$ to the actual interval.
What constituted the knot vector for splines are now the interfaces between the subintervals together with so-called node functionals, which are also defined by mapping from the reference interval. It is common standard to assume that continuity conditions between all subintervals are equal. Thus, a FE space is defined by
\begin{enumerate}
    \item The subdivision into subintervals
    \item The polynomial space and node functionals on the reference interval
\end{enumerate}
Note that in one space dimension this corresponds to a spline space where each knot has the same multiplicity. In higher dimensions, the methods differ by the fact that the FE version does not require tensor product meshes, just each cell must be a tensor product.

Following these remarks, $\mathcal{S}^q_{r}(\interval)$ denotes the space of polynomials on the reference interval $\interval$ of degree $q$ equipped with node functionals which establish regularity of degree $r$ at the interfaces between subintervals. In the examples below, $r=-1$ will refer to discontinuous FEs, $r=0$ to continuous, and $r=1$ to continuously differentiable elements.

We now derive the FE discretization for the one-di\-men\-sion\-al BGG diagram~\eqref{diagram-1D} as well as the one-dimensional BGG complex~\eqref{BGG-complex1D}. First, we observe that the regularity index of the last element in the complex is two less compared to the first element. As we cannot go below $L^2$, this implies that we must start with (at least) $H^2$ regularity on the left. 
The proxy field FE-BGG diagram reads:
\begin{equation}
    \label{1D:BGG-diagram-FEM}
    \begin{tikzcd}
        0 \arrow{r} &W^{0,0} \arrow{r}{\partial_x} &W^{1,0} \arrow{r}{} & 0\\
        0 \arrow{r} &W^{0,1}\arrow{r}{\partial_x} \arrow[ur, "I"]&W^{1,1} \arrow{r}{} & 0.
    \end{tikzcd}
\end{equation}
In detail, the spaces in the FE complex are (see~\cite{bonizzoni2021h,Bonizzoni-Kanschat2022}):
\begin{itemize}
    \item $W^{0,0}$ is the space of polynomials  in $\mathcal{S}^{q}_{\br}(\interval)$ defined by a modified set of Hermite interpolation conditions ensuring continuously differentiable transitions to the neighboring intervals, complemented with a set of interior moments. This results in the following set of $q+1$ node functionals:
    \begin{align}
        \label{eq:nodal00-1}
        \nodal^{0,0}_{2i+1} (u) = \partial_x^{i+1} u(0),\quad
            \nodal^{0,0}_{2i+2} (u) = \partial_x^{i+1} u(1), 
            &\quad i=0,\ldots,r-1,\\
        \label{eq:nodal00-2}    
        \nodal^{0,0}_{2r+i} (u) = \int_0^1 \ell_{i-1} \partial_x u\,\dx,
            &\quad i=1,\dots,q-2r,\\
        \label{eq:nodal00-3}
        \nodal^{0,0}_{q+1}(u) = u(1)+u(0),&    
    \end{align}
where $\ell_i$ denotes the Legendre polynomial of degree $i$ on the interval $[0,1]$, normalized with the condition $\ell_i(1)=1$.

    \item 
    $W^{1,0}$ is the space of polynomials in $\mathcal{S}^{q-1}_{r-1}(\interval)$ equipped with the following set of $q$ node functionals
    \begin{align}
        \label{eq:nodal10-1}
        \nodal^{1,0}_{2i+1} (v) = \partial_x^i v(0), \quad
            \nodal^{1,0}_{2i+2} (v) = \partial_x^i v(1),
            &\quad i=0,\ldots,r-1,\\
        \label{eq:nodal10-2}
        \nodal^{1,0}_{2r+i} (v) = \int_0^1 \ell_{i-1} v\, \dx,
        &\quad i=1,\dots,q-2r.
    \end{align}

    \item 
    $W^{0,1}$ is the space of polynomials in $\mathcal{S}^{q-1}_{r-1}(\interval)$ equipped with the set of node functionals $\{\nodal^{0,1}_j\}_{j=1}^q$ defined as in~\eqref{eq:nodal00-1}-\eqref{eq:nodal00-2}-\eqref{eq:nodal00-3} with $r$ and $q$ replaced by $r-1$ and $q-1$, respectively.

    \item 
    $W^{1,1}$ is the space of polynomials in $\mathcal{S}^{q-2}_{r-2}(\interval)$ equipped with the set of node functionals $\{\nodal^{1,1}_j\}_{j=1}^{q-1}$ defined as in~\eqref{eq:nodal10-1}-\eqref{eq:nodal10-2} with $r$ and $q$ replaced by $r-1$ and $q-1$, respectively.
\end{itemize}

The unisolvence of these finite elements was proven in~\cite[Section 3.3]{Bonizzoni-Kanschat2022}.
It is easily verified that each row in~\eqref{1D:BGG-diagram-FEM} forms a complex and $\partial_x$ is onto, so that Assumption~\ref{assump:1Dcomplex} is satisfied.

\begin{remark}
\label{rem:change-of-basis}
    The sets of node functionals in equations~\eqref{eq:nodal00-2} and~\eqref{eq:nodal00-3} do not contain $u(0)$ and $u(1)$ as such.
    \textcolor{purple}{Since} these are used in standard finite element definitions to ensure continuity at the interface of two elements\textcolor{purple}{, we have to ensure that our node functionals still yield a conforming method. Note, that they} encompass the linear combinations
    \begin{gather}
    \label{eq:commuting-pair}
        \nodal^{0,0}_{q+1}(u) = u(1)+u(0)
        \quad\text{and}\quad
        \nodal^{0,0}_{2r+1}(u) = \int_0^1 u'(x) \dx = u(1)-u(0).
    \end{gather}
    As this is a change of basis in the dual space, it is easily realized that the interpolants obtained by the commuting interpolation operators and by the standard ones, respectively, are equal. Hence, the commuting interpolation operators are used for the analysis only, while the standard ones \textcolor{purple}{yield conformity and} are used for practical implementation as in Figures~\ref{fig:V22} to~\ref{fig:V11} below.
    \textcolor{purple}{ More details on this remark can be found in Appendix~\ref{app:interpolation}}
\end{remark}

Let $n_{k,l}$ be the dimension of the shape function space $W^{k,l}$.
Given the above-mentioned node functionals, we can introduce the corresponding dual basis functions $\{\psi^{k,l}_i\}_{i=1,\ldots,n_{k,l}}$ for $W^{k,l}$, fulfilling
\begin{gather}
  \label{eq:deltaij}
  \nodal^{k,l}_j(\psi^{k,l}_i)=\delta_{ij},
  \qquad k,l=0,1,
\end{gather}
for all admissible values of $i$ and $j$. Then, the canonical interpolation operators on the polynomial spaces are defined on the reference interval as
\begin{gather}
    \label{1D:interp-FEM1}
    \interp^{k,l}(u):=\sum_{i=1}^{n_{k,l}} \nodal^{k,l}_i(u)\psi^{k,l}_i.
\end{gather}
The basis functions $\{\psi^{k,l}_i\}$ together with the node functionals $\{\nodal^{k,l}_i\}$ for $k,l=0,1$ and all admissible values of $i$, fulfill the assumptions of the commuting Lemma 2 in~\cite{Bonizzoni-Kanschat2022}. Hence, the canonical interpolation operators~\eqref{1D:interp-FEM1} commute with the  derivative, i.e., for all $u \in C^\infty(\interval)$, there holds
\begin{gather*}
  \partial_x \interp^{0,l} u = \interp^{1,l} \partial_x u, \qquad l=0,1.
\end{gather*}
\begin{lemma}
\label{lem:proxy-interpolation}
The canonical interpolation operator commutes with the identity operator in~\eqref{1D:BGG-diagram-FEM}, that is, for any $u\in C^{\infty}(\interval)$ there holds
\begin{gather}
    \label{eq:fe-commute-canonical}
    \interp^{0,1} u = \interp^{1,0} u.
\end{gather}
\end{lemma}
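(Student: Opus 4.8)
The plan is to exploit that $W^{0,1}$ and $W^{1,0}$ are defined on the \emph{same} shape function space $\mathcal{S}^{q-1}_{r-1}(\interval)$, so that the two canonical interpolants differ only because the two systems of node functionals differ. Since $W^{1,0}$ is unisolvent (\cite[Section 3.3]{Bonizzoni-Kanschat2022}), $\interp^{1,0}u$ is the \emph{unique} element of $\mathcal{S}^{q-1}_{r-1}(\interval)$ reproducing $u$ against $\nodal^{1,0}_1,\dots,\nodal^{1,0}_q$. Hence it suffices to show that $\interp^{0,1}u$ — which reproduces $u$ against every $\nodal^{0,1}_j$ by construction — also reproduces $u$ against every $\nodal^{1,0}_j$. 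Equivalently, setting $e:=u-\interp^{0,1}u\in C^\infty(\interval)$, one must show that $e$ is annihilated by each $\nodal^{1,0}_j$, knowing it is annihilated by all $\nodal^{0,1}_j$; that is, that each $\nodal^{1,0}_j$ lies in the linear span of $\{\nodal^{0,1}_1,\dots,\nodal^{0,1}_q\}$ \emph{viewed as functionals on $C^\infty(\interval)$}.

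To establish this span inclusion I would proceed by an explicit change of basis in the dual, in the spirit of Remark~\ref{rem:change-of-basis}. First, using that remark, I would replace the $W^{0,1}$-functional $u\mapsto u(1)+u(0)$ of type~\eqref{eq:nodal00-3} and the lowest moment $u\mapsto\int_0^1\partial_x u\,\dx=u(1)-u(0)$ of type~\eqref{eq:nodal00-2} (here $\ell_0\equiv 1$, and this is the fundamental theorem of calculus, valid for all smooth $u$) by the pair $u\mapsto u(0)$, $u\mapsto u(1)$. After this honest change of basis on $C^\infty(\interval)$, the endpoint evaluations $\partial_x^i u(0)$, $\partial_x^i u(1)$ for $i=0,\dots,r-1$ appear in both the $W^{0,1}$- and the $W^{1,0}$-systems, so the two lists can only differ in their interior-moment parts.

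It remains to match the interior moments: the $W^{1,0}$-moments $u\mapsto\int_0^1 \ell_j u\,\dx$ for $j=0,\dots,q-2r-1$ against the $W^{0,1}$-moments $u\mapsto\int_0^1 \ell_k\,\partial_x u\,\dx$ for $k=1,\dots,q-2r$. The tool here is integration by parts combined with the normalization $\ell_k(1)=1$, $\ell_k(0)=(-1)^k$, giving, for every smooth $u$,
\begin{equation*}
  \int_0^1 \ell_k\,\partial_x u\,\dx = u(1)-(-1)^k u(0) - \int_0^1 \ell_k'\,u\,\dx .
\end{equation*}
Since $u(0)$ and $u(1)$ are already in the span, this identity places each $\int_0^1 \ell_k'\,u\,\dx$ in the span; and because $\ell_1',\dots,\ell_{q-2r}'$ have degrees $0,\dots,q-2r-1$ and thus form a basis of $\mathcal{P}_{q-2r-1}$, every $\ell_j$ with $j\le q-2r-1$ is a linear combination of the $\ell_k'$, so every $W^{1,0}$-moment is recovered. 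This closes the span inclusion, whence $\nodal^{1,0}_j(e)=0$ for all $j$ and therefore $\interp^{0,1}u=\interp^{1,0}u$.

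The point I expect to require the most care — and the reason the lemma is not immediate even though the two spaces share their shape functions — is that the dual change of basis must be carried out as an identity of functionals on all of $C^\infty(\interval)$, not merely on the polynomials $\mathcal{S}^{q-1}_{r-1}(\interval)$: two node systems that agree on the shape functions but extend differently to smooth functions yield different interpolants. Thus every rewriting step above (the fundamental theorem of calculus, and the integration-by-parts identity with the Legendre endpoint values carried along) has to be an exact identity on $C^\infty(\interval)$. I would also, as a minor bookkeeping matter, treat the degenerate parameter ranges — for instance $r=1$, where there are no endpoint-derivative functionals, or $q$ close to $2r$, where there are few or no interior moments — separately, noting that the same computation goes through with the corresponding lists truncated.
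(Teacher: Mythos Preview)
Your proposal is correct and follows essentially the same route as the paper: both arguments hinge on showing, via integration by parts (and the fundamental theorem of calculus for the lowest moment), that the two systems of node functionals $\{\nodal^{0,1}_j\}$ and $\{\nodal^{1,0}_j\}$ span the \emph{same} subspace of $(C^\infty(\interval))^*$, and then invoke unisolvence. The paper phrases the concluding step slightly differently, writing $C^\infty = \mathbb{P}_{q-1}\oplus\mathbb{P}_{q-1}^\perp$ with $\mathbb{P}_{q-1}^\perp$ the common polar set, whereas you work directly with the error $e=u-\interp^{0,1}u$; but these are the same argument, and your careful emphasis that the dual change of basis must hold on all of $C^\infty$ (not merely on the shape functions) is exactly the point the paper is using implicitly.
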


\begin{proof}
First, we observe that the node functionals in~\eqref{eq:nodal00-2} for $i>1$ can be transformed by integration by parts into linear combinations of those in~\eqref{eq:nodal10-2}. In the case $i=1$ they simply reduce to the difference of the values in the end points.
Thus, the sets of node functionals for $W^{1,0}$ and for $W^{0,1}$ can each be obtained as linear combinations of the other set. We decompose
\begin{gather}
    C^\infty = \mathbb P_{q-1} \oplus \mathbb P_{q-1}^\perp,
\end{gather}
where $\mathbb P_{q-1}^\perp$ is the polar set of the node functionals, namely
\begin{align}
\mathbb P_{q-1}^\perp
  &= \bigl\{ u\in C^\infty \;\big|\;
  \nodal^{1,0}_i(u) = 0, \;i=1,\dots,q \}\\
  &= \bigl\{ u\in C^\infty \;\big|\;
  \nodal^{0,1}_i(u) = 0, \;i=1,\dots,q \}.
\end{align}
Hence, decomposing $u\in C^\infty$ according to this direct sum as $u=p+u^\perp$ and 
exploiting that both interpolation operators are projections onto $\mathbb P_{q-1}$, there holds
\begin{gather}
    \interp^{0,1} u = \interp^{0,1} p = p = \interp^{1,0} p = \interp^{1,0} u. 
\end{gather}
\end{proof}

The node functionals $\{\nodal^{k,l}_i\}$ are well-defined for smooth functions. However, as outlined in Remark 3 in~\cite{Bonizzoni-Kanschat2022}, weighted node functionals $\{\wnodal^{k,l}_i\}$ can be employed to obtain an $L^2$-bounded quasi-interpolation operator
\begin{gather}
  \label{1D:quasi-interp-FEM}
  \widetilde{\winterp}^{k,l}(u):=\sum_{i=1}^{n_{k,l}}
  \wnodal^{k,l}_i(u) \psi^{k,l}_i,
  \qquad k,l=0,1,
  \qquad u\in L^2(I),
\end{gather}
which commutes with the exterior derivative. We recall that $\widetilde{\winterp}^{k,l}$ is obtained by averaging over canonical interpolation operators~\eqref{1D:interp-FEM1} on perturbed intervals. Since~\eqref{eq:fe-commute-canonical} holds for each of them, and the averaging weights are chosen consistently, we conclude that the quasi-interpolation operators $\widetilde{\winterp}^{k,l}$ commute with the operators $S^{k,l}$. Hence, they satisfy Assumption~\ref{assump:interp}.
Note that applying Schöberl's trick, see for instance section 5.3 in~\cite{bonizzoni2021h}, we can even obtain a projection onto the discrete space.

\subsection{Finite elements in three dimensions}\label{sec:finite-elements-3D}
Tensor products of one-di\-men\-sion\-al functions are simply defined as $[u\otimes v\otimes w](x,y,z) = u(x)v(y)w(z)$. Thus, the definition of tensor product polynomials is straightforward.
Functions of this type are called rank-1 tensors. Since rank-1 tensors span the tensor product space, it is sufficient to define the tensor product of node functionals on such tensors and then extend them by linearity to the whole space.
Thus, for any indices $i,j,k$, a node functional $\nodal_{ijk}$ is defined by
\begin{gather}
  \label{eq:nodal-tensor}
    \nodal_{ijk}(u\otimes v\otimes w)
    = [\nodal_i\otimes\nodal_j\otimes\nodal_k](u\otimes v\otimes w)
    = \nodal_i(u)\nodal_j(v)\nodal_k(w).
\end{gather}
Hence, each node functional applies to the spatial variable $x$, $y$, or $z$ according to its position in the tensor product.

The unisolvence of the tensor product elements follows immediately from the same property of the one-dimensional fibers: let $\{\phi_\alpha\}$ be the basis of a one-dimensional element such that the interpolation condition~\eqref{eq:deltaij} holds. Then, by~\eqref{eq:nodal-tensor},
\begin{gather*}
    \nodal_{ijk}(\phi_\alpha\otimes\phi_\beta\otimes\phi_\gamma)
    = \delta_{i\alpha}\delta_{j\beta}\delta_{k\gamma}.
\end{gather*}
Using the fact that the set of rank-1 tensors of basis functions forms a basis of the tensor product space, we obtain a tensor product basis which is dual to the node functionals. We finish this discussion noting that the argument holds for anisotropic tensor product as well.

Finally, we observe that the tensor product of de Rham subcomplexes is a subcomplex of the de Rham complex on the Cartesian product domain (see~\cite{Arnold-Boffi-Bonizzoni}). Hence follows the conformity of the FE BGG complexes.

\begin{example}[Node functionals in 2D]
Let for instance in two dimensions $\nodal_1$ and $\nodal_2$ be node functionals in one dimension. Then
$[\nodal_1\otimes \nodal_2](u\otimes v) =\nodal_1(u)\nodal_2(v)$.
Choosing node functionals $\nodal_1(u) = u'(0)$, $\nodal_2(u) = u(0)$, and $\nodal_3(u)=\int u\dx$ we obtain by this construction in two dimensions
\begin{gather}
\label{eq:tensor-product-dofs}
\begin{aligned}
    [\nodal_1\otimes \nodal_1] (f) &= \partial_{xy} f(0,0),\qquad
    & [\nodal_1\otimes \nodal_3] (f) &= \int_0^1 \partial_x f(0,y)\,dy,\\
    [\nodal_1\otimes \nodal_2] (f) &= \partial_{x} f(0,0),
    & [\nodal_2\otimes \nodal_3] (f) &= \int_0^1 f(0,y)\,dy,\\
    [\nodal_2\otimes \nodal_1] (f) &= \partial_{y} f(0,0),
    & [\nodal_3\otimes \nodal_3] (f) &= \int_0^1 f(x,y)\,dx\,dy.
\end{aligned}
\end{gather}
\end{example}

Now we have set the stage for studying specific FEs for BGG complexes. Let us begin with the lowest regularity ($r=1$) element family for the $\div\div$ complex obtained by our method. This complex combines the last two rows of the diagram~\eqref{diagram-4rows-V}. Starting with $V^{3,3}$, we chose $\br_1 = \br_2 = \br_3 = 1$ to obtain an $L^2$-conforming space. Focusing on the isotropic case $p_1=p_2=p_3=p$, we obtain the following polynomial spaces for the $\div\div$ complex:
\begin{xalignat}2
  V^{0,2} &=
  \begin{pmatrix}
    \mathcal S^{p,p-1,p-1}_{1,0,0}\\
    \mathcal S^{p-1,p,p-1}_{0,1,0}\\
    \mathcal S^{p-1,p-1,p}_{0,0,1}
  \end{pmatrix}
  &V^{1,2} &=
  \begin{pmatrix}
    \mathcal S^{p-1, p-1, p-1}_{0,0,0}
    &\mathcal S^{p,p-2,p-1}_{1,-1,0}
    &\mathcal S^{p, p-1, p-2}_{1,0,-1} \\
    \mathcal S^{p-2,p,p-1}_{-1,1,0}
    &\mathcal S^{p-1,p-1,p-1}_{0,0,0}
    &\mathcal S^{p-1,p,p-2}_{0,1,-1}\\
    \mathcal S^{p-2,p-1,p}_{-1,0,1}
    &\mathcal S^{p-1,p-2,p}_{0,-1,1}
    &\mathcal S^{p-1,p-1,p-1}_{0,0,0}
  \end{pmatrix}\\
  V^{3,3} &= \mathcal S^{p-2,p-2,p-2}_{-1,-1,-1}
  &  V^{2,2} &=
  \begin{pmatrix}
    \mathcal S^{p,p-2, p-2}_{1,-1,-1}
    &\mathcal S^{p-1,p-1,p-2}_{0,0,-1}
    &\mathcal S^{p-1,p-2,p-1}_{0,-1,0} \\
    \mathcal S^{p-1,p-1,p-2}_{0,0,-1}
    &\mathcal S^{p-2,p,p-2}_{-1,1,-1}&
    \mathcal S^{p-2,p-1,p-1}_{-1,0,0}\\
    \mathcal S^{p-1,p-2,p-1}_{0,-1,0}
    &\mathcal S^{p-2,p-1,p-1}_{-1,0,0}
    &\mathcal S^{p-2,p-2,p}_{-1,-1,1}
\end{pmatrix}
\end{xalignat}
Thus, we have obtained exactly the same polynomial spaces as~\cite{hu2022new} as a special case of our theory. In particular, the three spaces on the diagonal of $V^{1,2}$ are identical, thus facilitating trace free matrices. Similarly, $V^{2,2}$ is suitable for symmetric matrices.

The degrees of freedom of these spaces can be read from the lower indices and can be constructed by tensor products as in equation~\eqref{eq:tensor-product-dofs}. Beginning from the right, $V^{3,3}$ is simply the space of discontinuous tensor product polynomials $\mathbb{Q}_{p-2}$ with only volume node functionals, implemented as moments with respect to the same space,
\begin{align*}
    &\int_K p\phi\dx & \phi&\in \mathbb{Q}_{p-2}(K).
\end{align*}
For $V^{2,2}$, the $\div\div$-conforming space of symmetric matrices, we obtain for the diagonal element $\sigma_{ii}$ the conditions, that the piecewise polynomials are continuously differentiable in $x_i$-direction and discontinuous in the other two coordinate directions. Thus, we introduce the set of node functionals
\begin{align}
    &\int_F (\n^T\sigma\n) \phi \dd s & \phi&\in \mathbb{Q}_{p-2}(F),\\
    &\int_F (\n^T\partial_n\sigma\n) \phi \dd s & \phi&\in \mathbb{Q}_{p-2}(F),
\end{align}
where $F$ runs through all faces of the reference cube $K$. Here, we used that on each Cartesian face, $\n^T\sigma\n$ selects the diagonal element $\sigma_{ii}$ where the unit vector $e_i$ is orthogonal to $F$. These degrees of freedom are shown for the lowest order case in the top row of figure~\ref{fig:V22}.
\begin{figure}
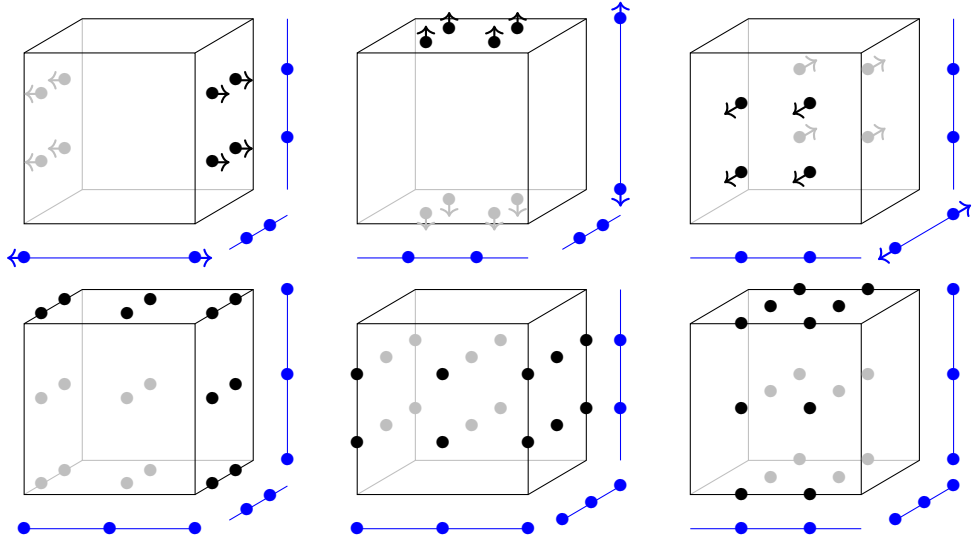

    \centering
    \includegraphics[width=.3\textwidth]{fig/311.tikz}
    \hfill
    \includegraphics[width=.3\textwidth]{fig/131.tikz}
    \hfill
    \includegraphics[width=.3\textwidth]{fig/113.tikz}
    
    \includegraphics[width=.3\textwidth]{fig/221.tikz}
    \hfill
    \includegraphics[width=.3\textwidth]{fig/212.tikz}
    \hfill
    \includegraphics[width=.3\textwidth]{fig/122.tikz}
    \caption{The degrees of freedom of the lowest order version of the $\div\div$-conforming space $V^{2,2}$ and the fibers of the tensor products in blue. Top row the diagonal entries $\sigma_{11}, \sigma_{22},\sigma_{33}$. Bottom row $\sigma_{12}=\sigma_{21}$, $\sigma_{13}=\sigma_{31}$, and $\sigma_{23}=\sigma_{32}$. Dots indicate function values, arrows indicate directional derivatives. Moments are visualized by quadrature points.}
    \label{fig:V22}
\end{figure}
For visualization purposes, the moments on faces are displayed as equivalent values in quadrature points.
At the bottom of figure~\ref{fig:V22}, we show how the degrees of freedom for the off-diagonal elements are constructed as tensor products of their one-dimensional fibers in the lowest order case.

The diagonal elements of a tensor $\sigma$ in the space $V^{1,2}$ are each from the standard continuous, isotropic FE space $\mathbb{Q}_{p-1}$.
The fact, that the corresponding matrix spaces in equations~\eqref{grad-grad0} and~\eqref{div-div0} are trace free implies that one component of the diagonal is determined by the other two. Hence, one shape function set can be eliminated.
The off-diagonal entries are anisotropic in polynomial degree and degrees of freedom. Instead of writing down complicated formulas for node values and test spaces, we show their construction in the lowest order case in figure~\ref{fig:V12}. 
\begin{figure}
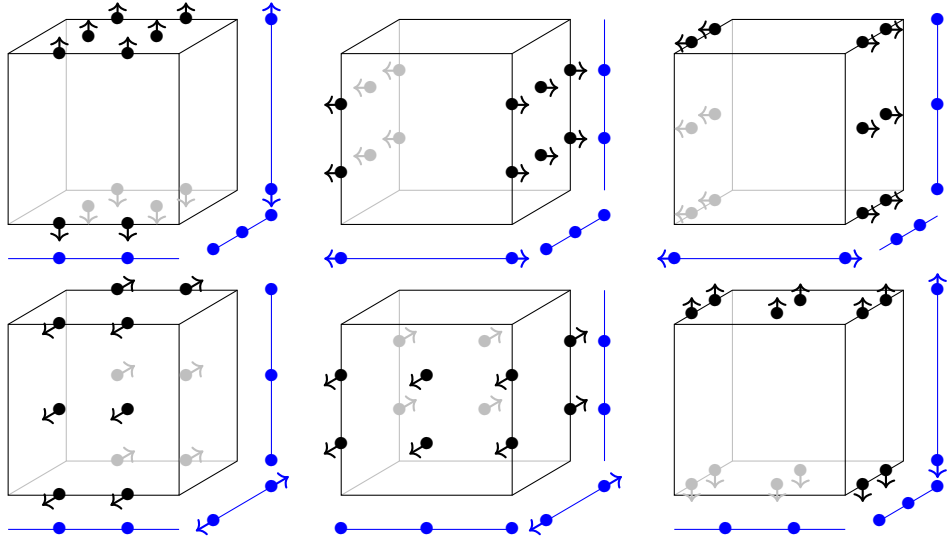

    \centering
    \includegraphics[width=.3\textwidth]{fig/132.tikz}
    \hfill
    \includegraphics[width=.3\textwidth]{fig/312.tikz}
    \hfill
    \includegraphics[width=.3\textwidth]{fig/321.tikz}
    
    \includegraphics[width=.3\textwidth]{fig/123.tikz}
    \hfill
    \includegraphics[width=.3\textwidth]{fig/213.tikz}
    \hfill
    \includegraphics[width=.3\textwidth]{fig/231.tikz}
    \caption{The degrees of freedom of the lowest order version of the symcurl-conforming space $V^{1,2}$ and the fibers of the tensor products in blue. First row for entries $\sigma_{21},\sigma_{12},\sigma_{13}$ and second row for $\sigma_{31},\sigma_{32},\sigma_{32}$. Dots indicate function values, arrows indicate directional derivatives. Moments are visualized by quadrature points.}
    \label{fig:V12}
\end{figure}
Each of the entries is a tensor product of a polynomial of degree 3, one of degree 2 and one of degree 1, and the six off-diagonal entries traverse all possible combinations of these. The node functionals are those of Hermite and Lagrange interpolation, respectively.

The tensor product construction also yields finite elements for the elasticity complex. The degrees of freedom of the lowest order strain element of our construction is displayed in figure~\ref{fig:V11}.
\begin{figure}
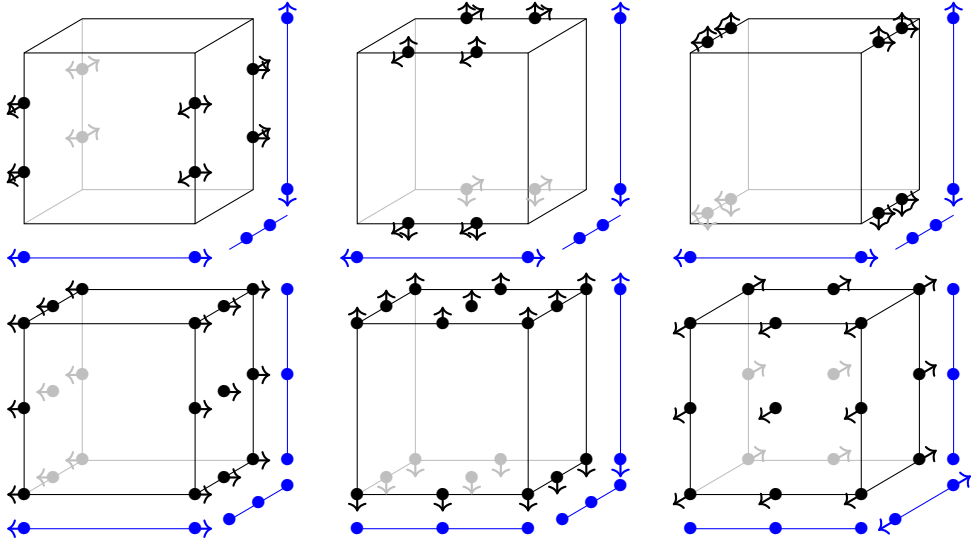

    \centering
    \includegraphics[width=.3\textwidth]{fig/133.tikz}
    \hfill
    \includegraphics[width=.3\textwidth]{fig/313.tikz}
    \hfill
    \includegraphics[width=.3\textwidth]{fig/331.tikz}
    
    \includegraphics[width=.3\textwidth]{fig/322.tikz}
    \hfill
    \includegraphics[width=.3\textwidth]{fig/232.tikz}
    \hfill
    \includegraphics[width=.3\textwidth]{fig/223.tikz}
    \caption{Degrees of freedom for the lowest order strain element $V^{1,1}$ of the elasticity complex. Diagonal entries $\sigma_{11},\sigma_{22},\sigma_{33}$ in the top row. Off-diagonal entries $\sigma_{23}=\sigma_{32}$, $\sigma_{13}=\sigma_{31}$, and $\sigma_{12}=\sigma_{21}$ in the bottom row. Pairs of arrows indicate first order and mixed second order derivatives.}
    \label{fig:V11}
\end{figure}
The corresponding stress element is $V^{2,2}$ in figure~\ref{fig:V22}. Comparing to~\cite{hu2014simple}, we see that their element in our notation is
\begin{gather}
  \label{eq:HMZ1}
    V^{2, 2}_{\text{HMZ}}=
\begin{pmatrix}
\mathcal S^{2,0,0}_{0,-1,-1} &\mathcal S^{1,1,0}_{0,0,-1}&\mathcal S^{1,0,1}_{0,-1,0} \\
\mathcal S^{1,1,0}_{0,0,-1}&\mathcal S^{0,2,0}_{-1,0,-1}&\mathcal S^{0,1,1}_{-1,0,0}\\
\mathcal S^{1,0,1}_{0,-1,0} & \mathcal S^{0,1,1}_{-1,0,0}&\mathcal S^{0,0,2}_{-1,-1,0}
\end{pmatrix},
\quad
V^{3, 2}_{\text{HMZ}}=
\begin{pmatrix}
\mathcal S^{1,0,0}_{-1,-1,-1}\\
\mathcal S^{0,1,0}_{-1,-1,-1}\\
\mathcal S^{0,0,1}_{-1,-1,-1}
\end{pmatrix}.
\end{gather}
Thus, the polynomial spaces of their element correspond to the polynomial spaces in our construction with $p_1=p_2=p_3=2$, in which case we cannot use Hermitian degrees of freedom. Furthermore, we cannot fit the element into the full four row diagram~\eqref{diagram-4rows-V} as the space $V^{2, 2}_{\text{HMZ}}$ is not the range of a curl.
Accordingly, the regularity indices do not fit into our construction. The spaces for off-diagonal matrix entries exhibit more regularity than expected, but the divergence operator is onto, since the diagonal elements have the right regularity.
Hence, we conclude that pairs in the BGG complex can be constructed in a more general way than  our construction, but that this typically may not yield a whole FE complex with locally defined degrees of freedom.

\section{Conclusions}\label{sec:conclusion}

We presented a general construction of Bernstein-Gelfand-Gelfand (BGG) complexes by tensor products of piecewise polynomial spaces. 
The construction is based on merging two complexes of alternating forms of arbitrary degrees in $\mathbb{R}^n$ via cross-linking maps which commute with the differential operators of the complexes to obtain a single BGG complex. We first constructed the BGG complexes in one-dimension, then extended our construction to $n$-dimensions via the tensor product of spaces and the interpolation operators.

The method is based on scales of one-dimensional piecewise polynomial spaces characterized abstractly by their polynomial degree and smoothness parameter. Under the assumption of a commutativity between the interpolation operator of these spaces and the operators in the one-dimensional BGG diagram, we derived BGG complexes in any dimension with commuting interpolation operators.

We presented two examples for the application of this construction. For once, standard spline spaces in one dimension can be employed to generate the BGG spline complexes. Then, we presented the same construction for finite element spaces and showed their degrees of freedom in three dimensions.
Since the proposed construction relies on tensor products, it is naturally defined over cubical meshes with a straightforward extension to finite element meshes consisting of rectangular cells.
While this is clearly a limitation, it offers an important advancement in this context as it enables us to address problems in any spatial dimension using arbitrary polynomial degrees and smoothness on such meshes.

The construction for some more delicate BGG complexes remains open. We take the conformal deformation complex \cite[Equation (50)]{arnold2021complexes} as an example:
 \begin{equation}\label{conformal-deformation}
\begin{tikzcd}[row sep=small]
0 \arrow{r}&H^{q}\otimes \mathbb{V}\arrow{r}{\dev\deff} &H^{q-1}\otimes (\mathbb{S}\cap \mathbb{T}) \\& \arrow{r}{\cot} &H^{q-4}\otimes (\mathbb{S}\cap \mathbb{T}) \arrow{r}{\div} & H^{q-5}\otimes \mathbb{V} \arrow{r}{} & 0.
 \end{tikzcd}
\end{equation}
 Here $\dev\deff=\dev\sym\grad$ is the symmetric trace-free part of the gradient, and
 $\cot:={\curl  \mathcal{T}^{-1}\curl  \mathcal{T}^{-1}\curl}$ leads to the linearized Cotton-York tensor with modified trace. 
If the vector element on the left has tensor polynomial structure like
\begin{gather}
    \omega = \begin{pmatrix}
    p_1,q_1,r_1\\
    p_2,q_2,r_2\\
    p_3,q_3,r_3
    \end{pmatrix},
\end{gather}
in order to be symmetric and trace free, we look at the orders of the gradient
\begin{gather}
\nabla \omega =
    \begin{pmatrix}
    p_1-1,q_1,r_1 & p_1,q_1-1,r_1 & p_1,q_1,r_1-1\\
    p_2-1,q_2,r_2 & p_2,q_2-1,r_2 & p_2,q_2,r_2-1\\
    p_3-1,q_3,r_3 & p_3,q_3-1,r_3 & p_3,q_3,r_3-1
    \end{pmatrix}.
\end{gather}
For instance, in order to be symmetric, we must require $p_1 = p_2-1$, but for the trace adding up to zero, we need $p_2 = p_1-1$.
 
Based on the construction in this paper, one may further investigate numerical schemes in the direction of isogeometric analysis, c.f., \cite{arf2022mixed,evans2020hierarchical,kapidani2022high,zhang2018use}.

\appendix
\section{Proxy fields}
\label{sec:appendix-proxies}
When discussing concrete realizations of the diagram~\eqref{BGGdiagram-general} in spline and finite element spaces below, we will define them in terms of proxy fields for the differential forms involved. Thus, we need vector representations of alternating forms. In three dimensions, we can choose the basis $dx_1, dx_2, dx_3$ for $\alt^{0,1} \cong \alt^1$
and $\alt^{1,0} \cong \alt^1$. Thus, we obtain a basis for the according vector space by assigning $e_i=dx_i$ for $i=1,2,3$. For the spaces $\alt^{0,2}$ and $\alt^{2,0}$, we assign in similar fashion
\begin{gather}
    e_1 = dx_2\wedge dx_3,
    \quad e_2 = dx_3\wedge dx_1,
    \quad e_3 = dx_1\wedge dx_2.
\end{gather}
The space $\alt^{1,1} = \alt^1\otimes \alt^1$ consists of matrices spanned by the basis $e_{ij} = dx_i\otimes dx_j$ for $i,j=1,2,3$. A basis for $\alt^{1,2} = \alt^1\otimes \alt^2$ can be formed of elements
\begin{align}
  \label{tensorform-1-2}
    e_{i,1} = dx_i\otimes dx_2\wedge dx_3,
    \quad e_{i,2} = dx_i\otimes dx_3\wedge dx_1,
    \quad e_{i,3} =& dx_i\otimes dx_1\wedge dx_2,
 \\\nonumber &   \qquad i=1,2,3.
\end{align}

Following \cite{arnold2021complexes}, we introduce notation for the algebraic operations we need in this picture.  Now we consider the following  basic linear algebraic operations:    $\skw: \M\to \K$ and $\sym:\M\to\S$ are the skew and symmetric part operators; $\tr:\M\to\R$ is the matrix trace; $\iota: \R\to \M$ is the map $\iota u:= uI$ identifying a scalar with a scalar matrix; $\dev:\mathbb{M}\to \mathbb{T}$ given by $\dev w:=w-1/n \tr (w)I$ is the deviator, or trace-free part.
In three space dimensions, we can identify a skew symmetric matrix with a vector,
$$
 \mskw\left ( 
\begin{array}{c}
v_{1}\\ v_{2}\\ v_{3}
\end{array}
\right ):= \left ( 
\begin{array}{ccc}
0 & -v_{3} & v_{2} \\
v_{3} & 0 & -v_{1}\\
-v_{2} & v_{1} & 0
\end{array}
\right ).
$$

Consequently, we have $\mskw(v)w = v\times w$ for $v,w\in\V$,
where $v\times w$ denotes the cross product between $v=(v_1,v_2,v_3)$ and $w=(w_1,w_2,w_3)$ defined as
\[
v\times w=\left|
\begin{pmatrix}
e_1 & e_2 & e_3\\
v_1 & v_2 & v_3\\
w_1 & w_2 & w_3\\
\end{pmatrix}
\right|,
\]
$\{e_j\}_{j=1,2,3}$ denoting the standard basis of $\mathbb R^3$.
 Using the same notation as in 3D, we define $\mskw: \mathbb{R}\to \mathbb{K}$ in two-dimensions by 
$$
\mskw(u):= \left ( 
\begin{array}{cc}
0 &u\\
-u & 0
\end{array}
\right )\quad \mbox{in } \mathbb{R}^{2}.
$$
We also define the operators
\begin{xalignat*}2
    \vskw&\colon \M\to \V & \vskw&=\mskw^{-1}\circ \skw\\
    \opS&\colon \M\to \M&\opS u&=u^{T}-\tr(u)I.
\end{xalignat*}

\section{Characterization of $s$ operators} \label{sec:appendix}

In this appendix, we prove equation~\eqref{eq:s_iJ}, containing the expression of $s^{i, j}$ applied to a basis. We first recall,~\cite[Equation (2.1)]{Arnold.D;Falk.R;Winther.R.2006a}, which will be helpful below:
\begin{align}\label{eqn:form}
[dx^{1}\wedge \cdots\wedge& dx^{k}](u_{1}, \cdots, u_{k})\\&\nonumber
=\sum_{l=1}^{k}(-1)^{l+1}dx^{1}(u_{l})[dx^{2}\wedge\cdots\wedge dx^{k}](u_{1}, \cdots, \widehat{u_{l}}, \cdots, u_{k}).
\end{align}

\begin{lemma}\label{lem:s}
Let $\sigma\in\Sigma(k,n)$ and $\tau\in\Sigma(m,n)$. Then
\begin{gather}
    \label{eq:app-lemma-2}
    s^{k,m} \left(dx^\sigma\otimes dx^\tau\right)
    = \sum_{l=1}^m (-1)^{l-1} dx^{\tau_l} \wedge dx^\sigma
    \otimes dx^{\tau_1}\wedge \cdots \wedge \widehat{dx^{\tau_l}}\wedge\cdots\wedge dx^{\tau_m}.
\end{gather}

Let $\bm s\in \charset_k$ and $\bm t\in\charset_m$ be the characteristic vectors of $\sigma$ and $\tau$, respectively. Then, equation~\eqref{eq:app-lemma-2} can be equivalently written as
\begin{multline}
    \label{eq:app-lemma-1}
    s^{k, m}\bigl((dx^{1})^{s_1}\wedge\cdots \wedge (dx^{n})^{s_n}\otimes (dx^{1})^{t_1}\wedge \cdots \wedge (dx^{n})^{t_n}\bigr)\\
    =
    \sum_{l=1}^{n}(-1)^{|\bm{t}|_{l}+1}\delta_{1,t_l}dx^{l}\wedge (dx^1)^{s_1}\wedge\cdots \wedge (dx^n)^{s_n}\otimes \cdots\\
    (dx^1)^{t_1}\wedge \cdots \wedge \widehat{dx^l}\wedge\cdots\wedge (dx^n)^{t_n}.
\end{multline}
\end{lemma}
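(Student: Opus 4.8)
The plan is to evaluate $s^{k,m}(dx^\sigma\otimes dx^\tau)$ on arbitrary vectors by unwinding the definition~\eqref{def:s}, perform two cofactor expansions of determinants, and then pass to the characteristic-vector form by a purely combinatorial reindexing.

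First I would take $\mu=dx^\sigma\otimes dx^\tau\in\alt^{k,m}$, so that $\mu(a_1,\dots,a_k)(b_1,\dots,b_m)=dx^\sigma(a_1,\dots,a_k)\,dx^\tau(b_1,\dots,b_m)$, and apply~\eqref{def:s}. Evaluating $s^{k,m}\mu\in\alt^{k+1,m-1}$ on $v_0,\dots,v_k$ in the first $k+1$ arguments and $w_1,\dots,w_{m-1}$ in the remaining $m-1$ arguments yields
\begin{equation*}
  s^{k,m}(dx^\sigma\otimes dx^\tau)(v_0,\dots,v_k)(w_1,\dots,w_{m-1})
  =\sum_{l=0}^{k}(-1)^{l}\,dx^\sigma(v_0,\dots,\widehat{v_l},\dots,v_k)\,dx^\tau(v_l,w_1,\dots,w_{m-1}).
\end{equation*}
Then I would cofactor-expand the factor $dx^\tau(v_l,w_1,\dots,w_{m-1})$ along its first argument $v_l$; this is the ``transpose'' of~\eqref{eqn:form} and follows from it by antisymmetry of the wedge (equivalently, it is the Laplace expansion along the first column of the determinant defining the wedge product). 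Substituting the resulting sum $\sum_{r=1}^{m}(-1)^{r-1}dx^{\tau_r}(v_l)\,\big(dx^{\tau_1}\wedge\cdots\wedge\widehat{dx^{\tau_r}}\wedge\cdots\wedge dx^{\tau_m}\big)(w_1,\dots,w_{m-1})$ and interchanging the two summations, the inner sum over $l$ becomes $\sum_{l=0}^{k}(-1)^{l}dx^{\tau_r}(v_l)\,dx^\sigma(v_0,\dots,\widehat{v_l},\dots,v_k)$, which equals $\big(dx^{\tau_r}\wedge dx^\sigma\big)(v_0,\dots,v_k)$ again by~\eqref{eqn:form} (with the summation index shifted from $1$-based to $0$-based, turning $(-1)^{l+1}$ into $(-1)^{l}$). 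Since the test vectors were arbitrary, this proves~\eqref{eq:app-lemma-2}, hence also~\eqref{eq:s_iJ}.

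For the equivalent form~\eqref{eq:app-lemma-1}, I would rewrite $dx^\sigma=(dx^1)^{s_1}\wedge\cdots\wedge(dx^n)^{s_n}$ and $dx^\tau=(dx^1)^{t_1}\wedge\cdots\wedge(dx^n)^{t_n}$, and reindex the sum in~\eqref{eq:app-lemma-2} over the \emph{positions} within $\tau$ by the corresponding \emph{values}: if $l\in\{1,\dots,n\}$ is the $p$-th smallest element of $\tau$, then $|\bm t|_{l}=p$, so the sign $(-1)^{p-1}$ becomes $(-1)^{|\bm t|_{l}+1}$, removing the $p$-th factor $dx^{\tau_p}$ from $dx^\tau$ amounts to deleting $dx^{l}$, and prepending $dx^{\tau_p}$ to $dx^\sigma$ amounts to prepending $dx^{l}$; inserting the Kronecker factor $\delta_{1,t_l}$ then allows the index to run over all of $\{1,\dots,n\}$. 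This gives exactly~\eqref{eq:app-lemma-1}.

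The two Laplace expansions and the reindexing are routine; the only place requiring care is the sign bookkeeping --- specifically the $1$-based versus $0$-based index shifts in the two invocations of~\eqref{eqn:form} and the identity $(-1)^{p-1}=(-1)^{|\bm t|_{l}+1}$ when $l$ is the $p$-th element of $\tau$. I also note that $s^{k,m}$ contributes only when the second alternating degree satisfies $m\ge1$ (otherwise the sum is empty), consistent with Remark~\ref{rem:s}.
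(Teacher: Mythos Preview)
Your proof is correct. The approach, however, differs from the paper's. You evaluate $s^{k,m}(dx^\sigma\otimes dx^\tau)$ on \emph{arbitrary} test vectors and directly transform the left-hand side into the right-hand side by two Laplace expansions: first expanding $dx^\tau(v_l,w_1,\dots,w_{m-1})$ along its first argument, then recognising the inner sum over $l$ as the expansion of $dx^{\tau_r}\wedge dx^\sigma$ along its first factor. The paper instead evaluates both sides separately on tuples of standard basis vectors $e_{\alpha_1},\dots,e_{\alpha_{k+1}}$ and $e_{\beta_1},\dots,e_{\beta_{m-1}}$ indexed by combinations, exploits the fact that at most one term survives in each sum, and then matches the two resulting expressions term by term. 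Your route is more algebraic and arguably cleaner, since it avoids the case analysis and yields the identity in one pass; the paper's route is more coordinate-oriented and makes the combinatorics of the surviving index explicit. Both handle the sign bookkeeping correctly, and your derivation of~\eqref{eq:app-lemma-1} from~\eqref{eq:app-lemma-2} by reindexing positions to values via $|\bm t|_l$ is the same observation the paper uses implicitly.
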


\begin{proof}
Let $\sigma\in\Sigma(k,n)$ and let $\tau\in\Sigma(m,n)$.
By the definition of $s^{k,m}$ in equation~\eqref{def:s}, there holds
\begin{align*}
T_1 &:=
    s^{k,m}\left[dx^\sigma\otimes dx^\tau\right]
    (e_{\br_1},\dots,e_{\br_{k+1}})
    (e_{\beta_1},\dots,e_{\beta_{m-1}})\\
    &=\sum_{\ell=1}^{k+1} (-1)^{\ell+1} \left[dx^\sigma\otimes dx^\tau\right]
    (e_{\br_1},\dots,\widehat{e_{\br_\ell}},\dots, e_{\br_{k+1}})
    (e_{\br_\ell},e_{\beta_1},\dots,e_{\beta_{m-1}})\\
    &=\sum_{\ell=1}^{k+1} (-1)^{\ell+1} 
    dx^\sigma (e_{\br_1},\dots,\widehat{e_{\br_\ell}},\dots, e_{\br_{k+1}})
    dx^\tau (e_{\br_\ell},e_{\beta_1},\dots,e_{\beta_{m-1}}),
\end{align*}
for $\br\in\Sigma(k+1,n)$ and $\beta\in\Sigma(m-1,n)$. We observe that at most one term in the sum on the right is nonzero, namely where $\ell$ can be chosen such there holds
\begin{gather}
    \{\br_1,\dots,\br_{k+1}\}
    = \{\br_\ell,\sigma_1,\dots,\sigma_k\},
    \qquad
    \{\br_\ell,\beta_1,\dots,\beta_{m-1}\}
    =\{\tau_1,\dots,\tau_m\}.
\end{gather}
In particular, we note that $\br_\ell\neq\sigma_i$
for $i=1,\dots,k$ and there is an index $j$ such that $\br_\ell = \tau_j$. Define $\tau'\in\Sigma(m-1,n)$ by removing $\tau_j$ from $\tau$. Then, by~\eqref{eqn:form},
\begin{align*}
    dx^\tau(e_{\br_\ell},e_{\beta_1},\dots,e_{\beta_{m-1}})
    &=
    (-1)^{j+1} [dx^{\tau_j}\wedge dx^{\tau'}](e_{\br_\ell},e_{\beta_1},\dots,e_{\beta_{m-1}})\\
    &= (-1)^{j+1} dx^{\tau_j}(e_{\br_\ell})
    dx^{\tau'}(e_{\beta_1},\dots,e_{\beta_{m-1}})\\
    &= (-1)^{j+1}
    dx^{\tau'}(e_{\beta_1},\dots,e_{\beta_{m-1}}).
\end{align*}
Thus,
\begin{gather}
    \label{eq:app-T1-final}
    T_1 = (-1)^{\ell+j} \delta_{\br_\ell,\tau_j}
    dx^\sigma(e_{\br_1},\dots,\widehat{e_{\br_\ell}},\dots, e_{\br_{k+1}})
    dx^{\tau'}(e_{\beta_1},\dots,e_{\beta_{m-1}}).
\end{gather}
Next, we start from~\eqref{eq:app-lemma-2} and let (again using $\tau'$ for $\tau$ without $\tau_j$)
\begin{align*}
    T_2 &:= 
    \sum_{j=1}^m (-1)^{j+1}
    \left[dx^{\tau_j} \wedge dx^\sigma \otimes dx^{\tau'}\right]
    (e_{\br_1},\dots,e_{\br_{k+1}})
    (e_{\beta_1},\dots,e_{\beta_{m-1}})\\
    &= \sum_{j=1}^m (-1)^{j+1}
    \left[dx^{\tau_j} \wedge dx^\sigma\right]
    (e_{\br_1},\dots,e_{\br_{k+1}})
    dx^{\tau'}
    (e_{\beta_1},\dots,e_{\beta_{m-1}}).
\end{align*}
The last sum contains at most one nonzero term, namely if $j$ can be chosen such that there holds
\begin{gather}
    \{\tau_j,\beta_1,\dots,\beta_{m-1}\}
    =\{\tau_1,\dots,\tau_m\},
    \qquad
    \{\br_1,\dots,\br_{k+1}\}
    =\{\tau_j,\sigma_1,\dots,\sigma_k\}.
\end{gather}
Since $\br_1,\dots,\br_{k+1}$ is as in~\eqref{eq:app-T1-final}, the index not contained in $\sigma$ is $\br_\ell$. By~\eqref{eqn:form} we get
\begin{gather}
    [dx^{\tau_j} \wedge dx^\sigma]
    (e_{\br_1},\dots,e_{\br_{k+1}})
    = (-1)^{\ell+1} dx^{\tau_j}(e_{\br_\ell})
    dx^\sigma(e_{\br_1},\dots,\widehat{e_{\br_\ell}},\dots,e_{\br_{k+1}}).
\end{gather}
Summarizing, we obtain
\begin{gather}
    \label{eq:app-T2-final}
    T_2 = (-1)^{\ell+j} \delta_{\br_\ell,\tau_j}
    dx^\sigma(e_{\br_1},\dots,\widehat{e_{\br_\ell}},\dots, e_{\br_{k+1}})
    dx^{\tau'}(e_{\beta_1},\dots,e_{\beta_{m-1}}),
\end{gather}
which is equal to $T_1$ in~\eqref{eq:app-T1-final}.
\end{proof}

We give a simple example to illustrate the lemma. For $k=m=1$, consider $s^{1, 1}[dx^1\otimes dx^2](e_1, e_2)()$, where $e_i$ is the dual basis of $dx^i$, $i=1, 2$. 
To better understand the notation, recall that $s^{1,1}$ maps $\alt^{1,1}$ in $\alt^{2,0}$. In particular, $s^{1, 1}[dx^1\otimes dx^2]$ is applied to $(e_1, e_2)$ in the first direction, whereas the second input is empty.
By definition of $s$, we have that the above is equal to
$$
[dx^1\otimes dx^2](e_2)(e_1)-[dx^1\otimes dx^2](e_1)(e_2)=-1.
$$
By Lemma \ref{lem:s}, the above is equal to 
$$
2\bigl[[dx^2\wedge dx^1]\otimes 1-[dx^1\wedge dx^2]\otimes 1\bigr](e_1, e_2)=2\cdot (0-\frac{1}{2})=-1.
$$

\section{On change of basis for interpolation operators}
\label{app:interpolation}
As a result of the peer review process, we are adding this appendix detailing the validity of  Remark~\ref{rem:change-of-basis}. We begin by proving that the standard and commuting interpolation operators yield the same result. We phrase this as the following general lemma:

\begin{lemma}
    Let $\nodal_1,\dots,\nodal_n$ be a set of node functionals on a suitable function space $V$ and let $\phi_1,\dots,\phi_n$ be a basis of an $n$-dimensional subspace $V_n\subset V$, such that the interpolation condition $\nodal_i(\phi_j) = \delta_{ij}$ holds for $i,j=1,\dots,n$.
    Let $\mathcal M_1,\dots,\mathcal M_n$ be a second set of node functionals on $V$ obtained by linearly independent linear combinations of the first set and let  $\psi_1,\dots,\psi_n$ be the basis of $V_n$ such that $\mathcal M_i(\psi_j) = \delta_{ij}$.
    
    Then, for any function $u$ for which $\nodal_1,\dots,\nodal_n$ are well-defined, there holds
    \begin{gather}
    \label{eq:appc-1}
        \sum_{i=1}^n \nodal_i(u)\phi_i
        =
        \sum_{i=1}^n \mathcal M_i(u)\psi_i.
    \end{gather}
    In particular, interpolation operators defined by the two sets of node values are identical.
\end{lemma}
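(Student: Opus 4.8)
\emph{Plan.} The statement is a purely algebraic fact about two dual bases of the same finite-dimensional space $V_n$, so the plan is to make the change of basis explicit and compute. First I would record that, since each $\mathcal M_i$ is a linear combination of the $\nodal_k$'s, there is a matrix $A=(a_{ik})\in\R^{n\times n}$ with $\mathcal M_i=\sum_{k=1}^n a_{ik}\nodal_k$, and the hypothesis that the $\mathcal M_i$ are linearly independent means precisely that $A$ is invertible. Next I would identify the transformed dual basis: writing $\psi_j=\sum_{l=1}^n b_{lj}\phi_l$ and imposing $\mathcal M_i(\psi_j)=\delta_{ij}$ yields $\sum_k a_{ik}b_{kj}=\delta_{ij}$, i.e.\ $B=A^{-1}$, so $\psi_j=\sum_{l}(A^{-1})_{lj}\phi_l$; this element lies in $V_n$ and satisfies the required biorthogonality, and by unisolvence of $\mathcal M_1,\dots,\mathcal M_n$ on $V_n$ it is the unique such element. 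Then the right-hand side of \eqref{eq:appc-1} becomes $\sum_{j}\mathcal M_j(u)\psi_j=\sum_{j,k,l}(A^{-1})_{lj}a_{jk}\nodal_k(u)\phi_l=\sum_{k,l}\delta_{lk}\nodal_k(u)\phi_l=\sum_l\nodal_l(u)\phi_l$, which is the left-hand side. Since the claim that the two interpolation operators are identical is exactly the assertion that $\sum_i\nodal_i(u)\phi_i=\sum_i\mathcal M_i(u)\psi_i$ for every admissible $u$, this completes the argument.

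I would also point out the coordinate-free reason behind the computation: both $u\mapsto\sum_i\nodal_i(u)\phi_i$ and $u\mapsto\sum_i\mathcal M_i(u)\psi_i$ are projections of $V$ onto $V_n$ — the biorthogonality conditions make each of them restrict to the identity on $V_n$ and hence idempotent — and they share the same kernel, namely the common polar set $\{u:\nodal_i(u)=0\ \forall i\}=\{u:\mathcal M_i(u)=0\ \forall i\}$, the two sets agreeing because $A$ is invertible. A projection is determined by its range and kernel, so the two operators coincide; concretely, for admissible $u$ one writes $u=\interp u+(u-\interp u)$ with $\interp u\in V_n$ and $u-\interp u$ in that common kernel (since $\interp(u-\interp u)=\interp u-\interp^2 u=0$), and applies the second operator to see it returns $\interp u$.

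There is essentially no hard step here; the only point deserving care is the domain of definition. The node functionals need not be defined on all of $V$, but both sides of \eqref{eq:appc-1} only ever involve the finitely many numbers $\nodal_i(u)$ (equivalently $\mathcal M_i(u)$), so the identity holds verbatim for every $u$ for which $\nodal_1,\dots,\nodal_n$ are well-defined, and no functional-analytic input (density, completeness) is needed. Applying this with $V=C^\infty(\interval)$, the node functionals of $W^{1,0}$ versus those of $W^{0,1}$ (respectively the pair \eqref{eq:commuting-pair} within the node set of $W^{0,0}$), and $V_n=\mathbb P_{q-1}$ (respectively the appropriate polynomial space) recovers exactly the equality of the commuting and the standard interpolants asserted in Remark~\ref{rem:change-of-basis}.
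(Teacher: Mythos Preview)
Your proof is correct and follows essentially the same route as the paper: introduce the invertible change-of-basis matrix relating $\{\mathcal M_i\}$ to $\{\nodal_k\}$, identify the dual basis $\{\psi_j\}$ via the inverse matrix, and verify \eqref{eq:appc-1} by direct substitution. The paper phrases the final computation as $v(u)\cdot\Phi(x)=(P^{-1}w(u))\cdot(P^T\Psi(x))=w(u)\cdot\Psi(x)$, which is exactly your index calculation in vector form; your additional coordinate-free remark (two projections with the same range $V_n$ and the same kernel, the common polar set) is a nice alternative viewpoint not present in the paper but entirely consistent with it.
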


\begin{proof}
    Let $u\in V$ arbitrary and let the vectors $v(u),w(u)\in\R^n$ be defined by $v_i(u) = \nodal_i(u)$ and $w_i(u) = \mathcal M_i(u)$, respectively.
    Due to the assumption on the two sets of node functionals, there is a nonsingular matrix $P$ such that $w(u) = P v(u)$ independent of $u$. 
    Let now for any $x$
    \begin{align*}
        \Phi(x) &= \bigl(\phi_1(x),\dots,\phi_n(x)\bigr)^T,\\
        \Psi(x) &= \bigl(\psi_1(x),\dots,\psi_n(x)\bigr)^T,
    \end{align*}
    For $\phi_j$, there holds
    \begin{align*}
        \phi_j
        &= \sum_i \mathcal M_i(\phi_j) \psi_i\\
        &= \sum_i\sum_k (P)_{ik} \nodal_k(\phi_j) \psi_i\\
        &= \sum_i (P)_{ij} \psi_i.
    \end{align*}
    Thus, $\Phi(x) = P^{T}\Psi(x)$ for any $x$.
    For any $u$, we conclude
    \begin{align*}
        \sum_i \nodal_i(u)\phi_i(x)
        &= v(u) \cdot \Phi(x)\\
        &= \bigl(P^{-1} w(u)\bigr) \cdot \bigl(P^T \Psi(x)\bigr)\\
        &= \bigl(P P^{-1} w(u)\bigr) \cdot \Psi(x)\\
        &= \sum_i \mathcal M_i(u)\psi_i(x).
    \end{align*}
\end{proof}

Introduce now on the whole mesh an interpolation operator using instead of~\eqref{eq:nodal00-2} and~\eqref{eq:nodal00-3} the standard node functionals $u(0)$ and $u(1)$.
By sharing these node functionals between adjacent cells, the range of this interpolation operator is the piecewise polynomial subspace conforming with the exterior derivative.
Hence, in the implementation, the standard interpolation operator involving $u(0)$ and $u(1)$ instead of~\eqref{eq:commuting-pair} as degrees of freedom is used.

In order to conduct the analysis, on each cell we use the preceding lemma to reinterpret the result as if it was obtained with the commuting degrees of freedom. Hence, the analysis applies.

\color{black}

\bibliographystyle{abbrv}      
\bibliography{reference}{}   

\begin{thebibliography}{10}

\bibitem{angoshtari2015differential}
A.~Angoshtari and A.~Yavari.
\newblock Differential complexes in continuum mechanics.
\newblock {\em Arch. Rat. Mech. Anal.}, 216(1):193--220, 2015.

\bibitem{arf2021structure}
J.~Arf and B.~Simeon.
\newblock Structure-preserving discretization of the {H}essian complex based on
  spline spaces.
\newblock arXiv:2109.05293, 2021.

\bibitem{arf2022mixed}
J.~Arf and B.~Simeon.
\newblock {Mixed isogeometric discretizations for planar linear elasticity}.
\newblock arXiv:2204.08095, 2022.

\bibitem{Arnold-Boffi-Bonizzoni}
D.~Arnold, D.~Boffi, and F.~Bonizzoni.
\newblock {Finite element differential forms on curvilinear cubic meshes and
  their approximation properties}.
\newblock {\em Numerische Mathematik}, 129(1):1 -- 20, 2015.

\bibitem{arnold2018finite}
D.~N. Arnold.
\newblock {\em {Finite Element Exterior Calculus}}.
\newblock SIAM, 2018.

\bibitem{arnold2005rectangular}
D.~N. Arnold and G.~Awanou.
\newblock Rectangular mixed finite elements for elasticity.
\newblock {\em Mathematical Models and Methods in Applied Sciences},
  15(09):1417--1429, 2005.

\bibitem{Arnold2006a}
D.~N. Arnold, R.~S. Falk, and R.~Winther.
\newblock Differential complexes and stability of finite element methods {II}:
  The elasticity complex.
\newblock In D.~Arnold, P.~Bochev, R.~Lehoucq, R.~Nicolaides, and M.~Shashkov,
  editors, {\em Compatible Spatial Discretizations}, volume 142 of {\em IMA
  Vol. Math. Appl.}, pages 47--68. Springer, Berlin, 2006.

\bibitem{Arnold.D;Falk.R;Winther.R.2006a}
D.~N. Arnold, R.~S. Falk, and R.~Winther.
\newblock Finite element exterior calculus, homological techniques, and
  applications.
\newblock {\em Acta Numerica}, 15:1--155, 2006.

\bibitem{arnold2007mixed}
D.~N. Arnold, R.~S. Falk, and R.~Winther.
\newblock {Mixed finite element methods for linear elasticity with weakly
  imposed symmetry}.
\newblock {\em Math. Comput.}, 76(260):1699--1723, 2007.

\bibitem{Arnold.D;Falk.R;Winther.R.2010a}
D.~N. Arnold, R.~S. Falk, and R.~Winther.
\newblock {Finite element exterior calculus: from Hodge theory to numerical
  stability}.
\newblock {\em Bulletin American Math. Soc.}, 47(2):281--354, Jan. 2010.

\bibitem{arnold2021complexes}
D.~N. Arnold and K.~Hu.
\newblock Complexes from complexes.
\newblock {\em Foundations of Computational Mathematics}, pages 1--36, 2021.

\bibitem{arnold2014periodic}
D.~N. Arnold and A.~Logg.
\newblock Periodic table of the finite elements.
\newblock {\em Siam News}, 47(9):212, 2014.

\bibitem{arnold2002mixed}
D.~N. Arnold and R.~Winther.
\newblock Mixed finite elements for elasticity.
\newblock {\em Numerische Mathematik}, 92(3):401--419, 2002.

\bibitem{bonizzoni2021h}
F.~Bonizzoni and G.~Kanschat.
\newblock {$H^{1}$}-conforming finite element cochain complexes and commuting
  quasi-interpolation operators on {C}artesian meshes.
\newblock {\em Calcolo}, 58(2):1--29, 2021.

\bibitem{Bonizzoni-Kanschat2022}
F.~Bonizzoni and G.~Kanschat.
\newblock A tensor-product finite element cochain complex with arbitrary
  continuity.
\newblock ECCOMAS 2022,
  \url{https://www.scipedia.com/public/Bonizzoni_Kanschat_2022a}, 2022.

\bibitem{buffa2011isogeometric}
A.~Buffa, J.~Rivas, G.~Sangalli, and R.~V{\'a}zquez.
\newblock Isogeometric discrete differential forms in three dimensions.
\newblock {\em SIAM Journal on Numerical Analysis}, 49(2):818--844, 2011.

\bibitem{vcap2022bgg}
A.~{\v{C}}ap and K.~Hu.
\newblock {BGG} sequences with weak regularity and applications.
\newblock arXiv:2203.01300, 2022.

\bibitem{cap2009parabolic}
A.~Cap and J.~Slov{\'a}k.
\newblock {\em Parabolic geometries {I}}.
\newblock Number 154 in Mathematical Surveys and Monographs. American
  Mathematical Soc., 2009.

\bibitem{vcap2001bernstein}
A.~{\v{C}}ap, J.~Slov{\'a}k, and V.~Sou{\v{c}}ek.
\newblock {Bernstein-Gelfand-Gelfand sequences}.
\newblock {\em Annals of Mathematics}, 154(1):97--113, 2001.

\bibitem{chen2020discrete}
L.~Chen and X.~Huang.
\newblock Discrete {H}essian complexes in three dimensions.
\newblock arXiv:2012.10914, 2020.

\bibitem{chen2020finite}
L.~Chen and X.~Huang.
\newblock Finite elements for divdiv-conforming symmetric tensors.
\newblock arXiv:2005.01271, 2020.

\bibitem{chen2021finite}
L.~Chen and X.~Huang.
\newblock A finite element elasticity complex in three dimensions.
\newblock arXiv preprint arXiv:2106.12786, 2021.

\bibitem{chen2021finite2}
L.~Chen and X.~Huang.
\newblock Finite elements for divdiv-conforming symmetric tensors in arbitrary
  dimension.
\newblock arXiv:2106.13384, 2021.

\bibitem{chen2021geometric}
L.~Chen and X.~Huang.
\newblock Geometric decompositions of div-conforming finite element tensors.
\newblock arXiv:2112.14351, 2021.

\bibitem{chen2022complexes}
L.~Chen and X.~Huang.
\newblock Complexes from complexes: Finite element complexes in three
  dimensions.
\newblock {\em arXiv preprint arXiv:2211.08656}, 2022.

\bibitem{chen2023finite}
L.~Chen and X.~Huang.
\newblock Finite element complexes in two dimensions.
\newblock {\em arXiv preprint arXiv:2206.00851}, 2022.

\bibitem{christiansen2009foundations}
S.~H. Christiansen.
\newblock Foundations of finite element methods for wave equations of maxwell
  type.
\newblock In {\em Applied Wave Mathematics: Selected Topics in Solids, Fluids,
  and Mathematical Methods}, pages 335--393. Springer, 2009.

\bibitem{christiansen2020discrete}
S.~H. Christiansen, J.~Gopalakrishnan, J.~Guzm{\'a}n, and K.~Hu.
\newblock A discrete elasticity complex on three-dimensional {A}lfeld splits.
\newblock arXiv:2009.07744, 2020.

\bibitem{christiansen2018nodal}
S.~H. Christiansen, J.~Hu, and K.~Hu.
\newblock {Nodal finite element de Rham complexes}.
\newblock {\em Numerische Mathematik}, 139(2):411--446, 2018.

\bibitem{christiansen2019finite}
S.~H. Christiansen and K.~Hu.
\newblock Finite element systems for vector bundles: elasticity and curvature.
\newblock {\em Foundations of Computational Mathematics}, pages 1--52, 2022.

\bibitem{christiansen2019poincare}
S.~H. Christiansen, K.~Hu, and E.~Sande.
\newblock Poincar{\'e} path integrals for elasticity.
\newblock {\em Journal de Math{\'e}matiques Pures et Appliqu{\'e}es}, 2019.

\bibitem{costabel2010bogovskiui}
M.~Costabel and A.~McIntosh.
\newblock {On Bogovski{\u\i} and regularized Poincar{\'e} integral operators
  for de Rham complexes on Lipschitz domains}.
\newblock {\em Mathematische Zeitschrift}, 265(2):297--320, 2010.

\bibitem{cottrell2009isogeometric}
J.~A. Cottrell, T.~J. Hughes, and Y.~Bazilevs.
\newblock {\em Isogeometric analysis: toward integration of {CAD} and {FEA}}.
\newblock John Wiley \& Sons, 2009.

\bibitem{veig2014var}
L.~Da~Veiga, A.~Buffa, G.~Sangalli, and R.~Vázquez.
\newblock Mathematical analysis of variational isogeometric methods.
\newblock {\em Acta Numerica}, 23:157--287, 2014.

\bibitem{deboor}
C.~de~Boor.
\newblock On calculating with b-splines.
\newblock {\em Journal of Approximation Theory}, 6:50--62, 1972.

\bibitem{eastwood2000complex}
M.~Eastwood.
\newblock {A complex from linear elasticity}.
\newblock In {\em Proceedings of the 19th Winter School" Geometry and
  Physics"}, pages 23--29. Circolo Matematico di Palermo, 2000.

\bibitem{evans2020hierarchical}
J.~A. Evans, M.~A. Scott, K.~M. Shepherd, D.~C. Thomas, and
  R.~V{\'a}zquez~Hern{\'a}ndez.
\newblock Hierarchical {B}-spline complexes of discrete differential forms.
\newblock {\em IMA Journal of Numerical Analysis}, 40(1):422--473, 2020.

\bibitem{falk2021bubble}
R.~S. Falk and R.~Winther.
\newblock {The Bubble Transform and the de {R}ham Complex}.
\newblock arXiv:2111.08123, 2021.

\bibitem{floater2020characterization}
M.~S. Floater and K.~Hu.
\newblock A characterization of supersmoothness of multivariate splines.
\newblock {\em Advances in Computational Mathematics}, 46(5):1--15, 2020.

\bibitem{Hackbusch14}
W.~Hackbusch.
\newblock Numerical tensor calculus.
\newblock {\em Acta Numerica}, 23:651--742, 2014.

\bibitem{hu2021conforming}
J.~Hu and Y.~Liang.
\newblock {Conforming discrete Gradgrad-complexes in three dimensions}.
\newblock {\em Mathematics of Computation}, 90(330):1637--1662, 2021.

\bibitem{hu2021conforming2}
J.~Hu, Y.~Liang, and R.~Ma.
\newblock {Conforming finite element DIVDIV complexes and the application for
  the linearized Einstein-Bianchi system}.
\newblock arXiv:2103.00088, 2021.

\bibitem{hu2022new}
J.~Hu, Y.~Liang, R.~Ma, and M.~Zhang.
\newblock {New conforming finite element divdiv complexes in three dimensions}.
\newblock arXiv:2204.07895, 2022.

\bibitem{hu2014simple}
J.~Hu, H.~Man, and S.~Zhang.
\newblock A simple conforming mixed finite element for linear elasticity on
  rectangular grids in any space dimension.
\newblock {\em J. Sci. Comput.}, 58(2):367--379, 2014.

\bibitem{hu2022oberwolfach}
K.~Hu.
\newblock {O}berwolfach report: Discretization of {H}ilbert complexes.
\newblock arXiv:2208.03420, 2022.

\bibitem{kapidani2022high}
B.~Kapidani and R.~V{\'a}zquez.
\newblock {High order geometric methods with splines: an analysis of discrete
  {H}odge--star operators}.
\newblock arXiv:2203.00620, 2022.

\bibitem{neilan2016stokes}
M.~Neilan and D.~Sap.
\newblock Stokes elements on cubic meshes yielding divergence-free
  approximations.
\newblock {\em Calcolo}, 53(3):263--283, 2016.

\bibitem{pauly2016closed}
D.~Pauly and W.~Zulehner.
\newblock On closed and exact {Grad-grad-and div-Div-}complexes, corresponding
  compact embeddings for tensor rotations, and a related decomposition result
  for biharmonic problems in {3D}.
\newblock {\em arXiv preprint arXiv:1609.05873}, 2016.

\bibitem{pauly2018divdiv}
D.~Pauly and W.~Zulehner.
\newblock The {divDiv}-complex and applications to biharmonic equations.
\newblock {\em Applicable Analysis}, pages 1--52, 2018.

\bibitem{bsplines}
H.~Prautzsch, W.~Boehm, and M.~Paluszny.
\newblock {\em Bezier and B-Spline Techniques}.
\newblock Berlin: Springer, 2002.

\bibitem{sander2021conforming}
O.~Sander.
\newblock {Conforming Finite Elements for $H(\mathrm{sym}\,\mathrm{curl})$ and
  $H(\mathrm{dev}\,\mathrm{sym}\,\mathrm{curl})$}.
\newblock arXiv:2104.12825, 2021.

\bibitem{schumaker}
L.~Schumaker.
\newblock {\em Polynomial Splines. In Spline Functions: Basic Theory},
  volume~3.
\newblock Cambridge University Press, 2007.

\bibitem{zhang2018use}
Y.~Zhang, V.~Jain, A.~Palha, and M.~Gerritsma.
\newblock {The Use of Dual B-Spline Representations for the Double de {R}ham
  Complex of Discrete Differential Forms}.
\newblock In {\em Conference on Isogeometric Analysis and Applications}, pages
  227--242. Springer, 2018.

\end{thebibliography}

\end{document}